%%%%%%%%%%%%%%%%%%%%%%%%%%%%%%%%%%%%%%%%%%%%%%%%%%%%%%%%%%%%%%%
%%%%%%%%%%%%%% VERSION Feb 27, 2020 %%%%%%%%%%%%%%%%%%%%%%%%%%%%%%%%%
%%%%%%%%%%%%%% Julio %%%%%%%%%%%%%%%%%%%%%%%%%%%%%%%%%%%%%%%%%%
%%%%%%%%%%%%%%%%%%%%%%%%%%%%%%%%%%%%%%%%%%%%%%%%%%%%%%%%%%%%%%%

\documentclass[11pt]{amsart}

\usepackage{latexsym}
\usepackage{amssymb}
\usepackage{amsmath}
\usepackage{fancybox,color}
\usepackage{amssymb}
\usepackage{amsfonts}
\usepackage{latexsym}
\usepackage{amsmath,systeme}
\usepackage[leqno]{mathtools}

%%%%%%%%%%%

%\addtolength{\parskip}{0.15cm}

%\parskip 2pt

\mathtoolsset{showonlyrefs}

\usepackage{hyperref}

%\newlength{\hchng}
%\newlength{\vchng}
%\setlength{\hchng}{0.55in} \setlength{\vchng}{0.45in}
%\addtolength{\oddsidemargin}{-\hchng}
%\addtolength{\evensidemargin}{-\hchng}
%\addtolength{\textwidth}{2\hchng}
%

\textwidth=1.5\textwidth 
%\textheight=1\textheight 
\hoffset=-3cm

\def\1{\raisebox{2pt}{\rm{$\chi$}}}

% THEOREM Environments --------------
\newtheorem{theorem}{Theorem}
\newtheorem{corollary}[theorem]{Corollary}
\newtheorem{lemma}[theorem]{Lemma}
\newtheorem{proposition}[theorem]{Proposition}
\newtheorem{definition}[theorem]{Definition}
\newtheorem{remark}[theorem]{Remark}

%\newenvironment{proof}{\removelastskip\vskip12pt
%plus 1pt \noindent\em {\bf Proof.}\rm}{\hfill$\Box$ \vskip.2cm}

%\numberwithin{equation}{section}
\newcommand{\R}{{\mathbb R}}
\newcommand{\E}{{\mathbb E}}

\newcommand{\N}{{\mathbb N}}

\newcommand{\W}{{\mathbb W}}
\renewcommand{\P}{{\mathbb P}}

 \newcommand{\eps}{{\varepsilon}}
 \def\1{\raisebox{2pt}{\rm{$\chi$}}}

\usepackage{enumerate}

\def\vint_#1{\mathchoice%
          {\mathop{\kern 0.2em\vrule width 0.6em height 0.69678ex depth -0.58065ex
                  \kern -0.8em \intop}\nolimits_{\kern -0.4em#1}}%
          {\mathop{\kern 0.1em\vrule width 0.5em height 0.69678ex depth -0.60387ex
                  \kern -0.6em \intop}\nolimits_{#1}}%
          {\mathop{\kern 0.1em\vrule width 0.5em height 0.69678ex
              depth -0.60387ex
                  \kern -0.6em \intop}\nolimits_{#1}}%
          {\mathop{\kern 0.1em\vrule width 0.5em height 0.69678ex depth -0.60387ex
                  \kern -0.6em \intop}\nolimits_{#1}}}
\def\vintslides_#1{\mathchoice%
          {\mathop{\kern 0.1em\vrule width 0.5em height 0.697ex depth -0.581ex
                  \kern -0.6em \intop}\nolimits_{\kern -0.4em#1}}%
          {\mathop{\kern 0.1em\vrule width 0.3em height 0.697ex depth -0.604ex
                  \kern -0.4em \intop}\nolimits_{#1}}%
          {\mathop{\kern 0.1em\vrule width 0.3em height 0.697ex depth -0.604ex
                  \kern -0.4em \intop}\nolimits_{#1}}%
          {\mathop{\kern 0.1em\vrule width 0.3em height 0.697ex depth -0.604ex
                  \kern -0.4em \intop}\nolimits_{#1}}}

\newcommand{\kint}{\vint}

\newcommand{\aveint}[2]{\mathchoice%
          {\mathop{\kern 0.2em\vrule width 0.6em height 0.69678ex depth -0.58065ex
                  \kern -0.8em \intop}\nolimits_{\kern -0.45em#1}^{#2}}%
          {\mathop{\kern 0.1em\vrule width 0.5em height 0.69678ex depth -0.60387ex
                  \kern -0.6em \intop}\nolimits_{#1}^{#2}}%
          {\mathop{\kern 0.1em\vrule width 0.5em height 0.69678ex depth -0.60387ex
                  \kern -0.6em \intop}\nolimits_{#1}^{#2}}%
          {\mathop{\kern 0.1em\vrule width 0.5em height 0.69678ex depth -0.60387ex
                  \kern -0.6em \intop}\nolimits_{#1}^{#2}}}

\newcommand{\half}{{\frac{1}{2}}}
\newcommand{\abs}[1]{\left| #1 \right|}

\newcommand{\ol}{\overline}
\newcommand{\Om}{\Omega}
\newcommand{\I}{\textrm{I}}
\newcommand{\II}{\textrm{II}}

\begin{document}

\title[An elliptic system with two different operators]
{\bf A game theoretical approach for an elliptic system with two different operators
(the Laplacian and the infinity Laplacian)}

\author[A. Miranda and J. D. Rossi]
{Alfredo Miranda and Julio D. Rossi}

\address{Alfredo Miranda and Julio D. Rossi
\hfill\break\indent
Departamento  de Matem{\'a}tica, FCEyN,
Universidad de Buenos Aires,
\hfill\break\indent Pabellon I, Ciudad Universitaria (1428),
Buenos Aires, Argentina.}

\email{{\tt manfredpichota@gmail.com, jrossi@dm.uba.ar}}

\date{}

\begin{abstract} In this paper we find viscosity solutions to an elliptic system governed by two different operators 
(the Laplacian and the infinity Laplacian) using a probabilistic approach. 
We analyze a game that combines the Tug-of-War with Random Walks in two different boars. We show that these value functions converge uniformly
to a viscosity solution of the elliptic system as the step size goes to zero. 

In addition, we show uniqueness for the elliptic system using pure PDE techniques.
\end{abstract}

\maketitle

%\begin{center}\today
%\end{center}

\section{Introduction}

Our goal in this paper is to describe a probabilistic game whose
value functions approximate viscosity solutions to the following elliptic system:
\begin{equation}
\label{ED1}
 \left\lbrace
\begin{array}{ll}
- \displaystyle \half \Delta_{\infty}u(x) + u(x) - v(x)=0 \qquad &  \ x \in \Omega,  \\[10pt]
-   \displaystyle \frac{\kappa}{2} \Delta v(x) + v(x) - u(x)=0  \qquad & \ x \in \Omega,  \\[10pt]
u(x) = f(x) \qquad & \ x \in \partial \Omega,  \\[10pt]
v(x) = g(x) \qquad & \ x \in \partial \Omega, 
\end{array}
\right.
\end{equation}
here $\kappa >0$ is a constant that can be chosen adjusting the parameters of the game. 
The domain $\Omega \subset \R^N$ is assumed to be bounded and satisfy 
the uniform exterior ball property, that is, there is $\theta > 0$ such that for all $y\in\partial\Om$ there exists a closed ball 
of radius $\theta$ that only touches $\ol{\Om}$ at $y$. This means that, for each $y\in\partial\Om$ there exists a $z_{y}\in \R^{N}\backslash\Om$ such that $\ol{B_{\theta}(z_{y})}\cap\ol{\Om}= \{ y \}$. The boundary data $f$ and $g$ are assumed to be Lipschitz functions.

Notice that this system involves two differential operators, the usual Laplacian 
$$ \Delta \phi = \sum\limits_{i=1}^{N} \partial_{x_{i}x_{i}}\phi$$ and the infinity Laplacian (see \cite{Cran})
 $$
 \Delta_{\infty}\phi =  
 \langle D^{2}\phi \frac{\nabla \phi}{|\nabla \phi |},\frac{\nabla \phi}{| \nabla \phi |}\rangle 
 =\frac{1}{| \nabla \phi |^2}\sum\limits_{i,j=1}^{N} \partial_{x_{i}}\phi \partial_{x_{i}x_{j}}\phi
 \partial_{x_{j}}\phi.$$ 
 
 This system \eqref{ED1} is not variational (there is no associated energy). Therefore, to find solutions
 one possibility is to use monotonicity methods (Perron's argument). Here we will look at the system
 in a different way and to obtain existence of solutions we find an approximation using game theory.
This approach not only gives existence of solutions but it also provide us with a description that
yield some light on the behaveiour of the solutions.  
At this point we observe that we will understand solutions to the system in the viscosity 
sense, this is natural since the infinity Laplacian is not variational (see Section \ref{sect-prelim} for
the precise definition). 

The fundamental works by Doob, Feller, Hunt, Kakutani, Kolmogorov and many others
show the deep connection between classical potential theory and probability theory.
The main idea that is behind this relation is that harmonic functions 
and martingales have something in common: the mean value formulas.
A well known fact 
is that $u$ is harmonic, that is $u$ verifies the PDE $\Delta u =0$, 
if and only if it verifies the mean value property
$
u(x) =
\frac{1}{|B_\varepsilon (x) |} \int_{B_\varepsilon (x) } u(y) \, dy.
$
In fact, we can relax
this condition by requiring that it holds asymptotically
$
u(x) =
\frac{1}{|B_\varepsilon (x) |} \int_{B_\varepsilon (x) } u(y) \, dy + o(\varepsilon^2),
$
as $\varepsilon\to 0$.
The connection between the Laplacian and the Bownian motion or with the limit
of random walks as the step size goes to zero is also well known, see \cite{Kac}. 

The ideas and techniques used for linear equations 
have been extended to cover nonlinear cases as well.
Concerning nonlinear equations, for a mean value property for the $p-$Laplacian (including 
the infinity Laplacian) we refer to \cite{I}, \cite{KMP}, \cite{LM} and \cite{MPR}. For a probabilistic
approximation of the infinity Laplacian
there is a game (called Tug-of-War game in the literature) that was introduced in \cite{PSSW} and 
generalized in several directions to cover other equations, like the $p-$Laplacian, see \cite{TPSS,AS,BR,ChGAR,LPS,MPR,
MPRa,MPRb,Mitake,PS,R}
and the book \cite{BRLibro}.

Now let us describe the game that is associated with \eqref{ED1}.
It is a two-player zero-sum game played in two different bards (two different copies of the set $\Omega \subset \mathbb{R}^N$). Fix a parameter, $\eps>0$ and two final payoff functions 
$ \ol{f},  \ol{g} :\mathbb{R}^N \setminus \Omega \mapsto \mathbb{R}$ (one for each board, $ \ol{f}$ for the first board and $ \ol{g}$ for the second one).
These payoff functions  $ \ol{f}$ and $ \ol{g}$ are just two Lipschitz extensions to $\mathbb{R}^N \setminus \Omega$ of the boundary data $f$ and $g$ that appear in \eqref{ED1}. 
The rules of the game are the following:
the game starts with a token at an initial position $x_0 \in \Omega$, in one of the two boards. 
In the fist board, with probability $1-\eps^2$, 
the players play Tug-of-War as described in \cite{PSSW,MPR} (this game is associated
with the infinity Laplacian). Playing Tug-of-War, the players toss a fair coin and the winner
chooses a new position of the game with the restriction that $x_1 \in B_\eps (x_0)$. When the token is
in the first board, with probability
$\eps^2$ the token jumps to the other board (at the same position $x_0$). In the second board 
with probability $1-\eps^2$ the token is moved at random (uniform probability) to some point 
$x_1 \in B_\eps (x_0)$ and with probability $\eps^2$ the token jumps back to the first board
(without changing the position). 
The game continues until the position of the token leaves the domain and at this 
point $x_\tau$ the first player gets $\ol{f}(x_\tau)$ and the second player $- \ol{f}(x_\tau)$ if they are
playing in the first board while they obtain $\ol{g}(x_\tau)$ and $- \ol{g}(x_\tau)$ if they are
playing in the second board
(we can think that Player $\II$ pays to Player $\I$ the amount given by $\ol{f}(x_\tau)$ or by $\ol{g}(x_\tau)$
according to the board in which the game ends).
This game has a expected value (the best outcome of the game that both players expect to obtain
playing their best,
see Section \ref{sect-game2} for a precise definition). In this case the value of the game
is given by a pair of functions $(u^\eps, v^\eps)$, defined in $\Omega$ 
that depends on the size of the steps, $\eps$. For $x_0 \in \Omega$, the value of $u^\eps (x_0)$
is the expected outcome of the game when it starts at $x_0$ in the first board and $v^\eps (x_0)$ is the expected value starting at $x_0$ in the second board.

Our fist theorem ensures that this game has a well-defined value and that this pair of functions $(u^\eps, v^\eps)$
 verifies a system of equations (called the dynamic programming principle (DPP))
in the literature).  Similar results are proved in \cite{BR,LPS,MPRa,PSSW,R}.

\begin{theorem} \label{teo.dpp2}
The game has value 
$
(u^\eps, v^\eps)
$
that verifies 
\begin{equation}
\label{DPP}
 \left\lbrace
\begin{array}{ll}
 \displaystyle u^{\eps}(x)=\eps^{2}v^{\eps}(x)+(1-\eps^{2})\Big\{\half \sup_{y \in B_{\eps}(x)}u^{\eps}(y) + \half \inf_{y \in B_{\eps}(x)}u^{\eps}(y)
 \Big\} \quad &  \ x \in \Omega,  \\[10pt]
   \displaystyle v^{\eps}(x)=\eps^{2}u^{\eps}(x)+(1-\eps^{2})\kint_{B_{\eps}(x)}v^{\eps}(y)dy  \quad &  \ x \in \Omega,  \\[10pt]
u^{\eps}(x) = \ol{f}(x) \quad & \ x \in \R^{N} \backslash \Omega,  \\[10pt]
v^{\eps}(x) = \ol{g}(x) \quad &  \ x \in \R^{N} \backslash \Omega. 
\end{array}
\right.
\end{equation} 
Moreover, there is a unique solution to \eqref{DPP}.
\end{theorem}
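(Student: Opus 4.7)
The statement has two halves: the game value exists and solves \eqref{DPP}, and \eqref{DPP} has a unique solution. I would derive existence game-theoretically and uniqueness by a comparison argument that crucially uses the averaging nature of the second equation.

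\noindent\textbf{Existence.} First I would show that the game terminates a.s.\ with uniformly bounded expected length. In the second board the token performs a uniform random walk in $B_{\eps}$-steps with probability $1-\eps^{2}$; a standard barrier argument (applying the DPP operator to $-|x|^{2}$ together with boundedness of $\Om$) shows that the expected exit time from $\Om$ is uniformly bounded in $x_{0}\in\Om$. Since each step in the first board jumps to the second board with probability $\eps^{2}>0$, the token a.s.\ reaches the second board and the game terminates. Next I would define the upper and lower values $\ol{u}^{\eps},\underline{u}^{\eps},\ol{v}^{\eps},\underline{v}^{\eps}$ as the usual infsup and supinf over strategies of the expected terminal payoff, and verify by conditioning on the first move (board-jump or stay, and tug-of-war vs.\ random walk) that they satisfy \eqref{DPP}. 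Once uniqueness is in hand, this forces $\ol{u}^{\eps}=\underline{u}^{\eps}$, $\ol{v}^{\eps}=\underline{v}^{\eps}$, and the game has a value solving \eqref{DPP}.

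\noindent\textbf{Uniqueness.} Let $(u_{1},v_{1}),(u_{2},v_{2})$ both solve \eqref{DPP} with the same boundary data, set $w=u_{1}-u_{2}$, $z=v_{1}-v_{2}$ (which vanish on $\R^{N}\setminus\Om$), and write $\alpha=\|w\|_{\infty}$, $\beta=\|z\|_{\infty}$. Using $|\sup u_{1}-\sup u_{2}|,|\inf u_{1}-\inf u_{2}|\leq\sup_{B_{\eps}(x)}|w|$, the first DPP equation yields
\begin{equation}
|w(x)|\leq\eps^{2}|z(x)|+(1-\eps^{2})\sup_{B_{\eps}(x)}|w|,
\end{equation}
and taking suprema gives $\alpha\leq\beta$. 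For the second equation, iterating $z=\eps^{2}w+(1-\eps^{2})\kint z$ along the random walk $(X_{n})$ in $B_{\eps}$-steps with $X_{0}=x_{0}$, stopped at $\tau=\inf\{n\geq 1:X_{n}\notin\Om\}$, and using $z\equiv 0$ outside $\Om$, produces the representation
\begin{equation}
z(x_{0})=\eps^{2}\,\E\Bigl[\,\sum_{n=0}^{\tau-1}(1-\eps^{2})^{n}w(X_{n})\Bigr],
\end{equation}
hence $|z(x_{0})|\leq\alpha\bigl(1-\E[(1-\eps^{2})^{\tau}]\bigr)$. Since $\E[\tau]\leq T$ is uniformly bounded (by the termination estimate), Jensen's inequality gives $\E[(1-\eps^{2})^{\tau}]\geq(1-\eps^{2})^{T}=:\lambda_{0}>0$ uniform in $x_{0}$. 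Therefore $\beta\leq(1-\lambda_{0})\alpha$, which combined with $\alpha\leq\beta$ forces $\alpha=\beta=0$.

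\noindent\textbf{Main obstacle.} The delicate point is that neither equation of \eqref{DPP} is a strict contraction on its own: the first yields only $\alpha\leq\beta$ because $\tfrac{1}{2}\sup+\tfrac{1}{2}\inf$ is not strictly averaging, and the second in isolation gives only $\beta\leq\alpha$. The strict gain that closes the loop comes from the averaging character of the Laplacian equation, quantified through the a.s.\ exit of the driving random walk by the factor $\E[(1-\eps^{2})^{\tau}]>0$; this interplay between the two different operators is what makes the coupled system well-posed.
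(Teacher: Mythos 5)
Your uniqueness argument is a genuinely different route from the paper's, and it is correct. The paper proves uniqueness of the DPP solution as a \emph{byproduct} of a game-theoretic argument: it first constructs one DPP solution by Perron's method (taking the supremum of DPP subsolutions), and then uses near-optimal strategies together with the Optional Stopping Theorem to show that \emph{any} DPP solution equals both the sup-inf and inf-sup game values. Your argument instead is a direct analytical contraction estimate: from the first equation you extract $\alpha\leq\beta$, and from the random-walk representation of the second equation you extract the strict gain $\beta\leq(1-\lambda_0)\alpha$ with $\lambda_0=\inf_{x_0}\E_{x_0}[(1-\eps^2)^\tau]>0$, which forces $\alpha=\beta=0$. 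This is more elementary than the paper's OST-based proof, it does not require constructing strategies, and it isolates exactly where the well-posedness comes from: the averaging in the Laplacian equation transfers a genuine contraction to the otherwise non-contracting Tug-of-War equation. (You should say explicitly that you work in the class of bounded solutions, as the paper does via the class $A$; boundedness is needed both for the remainder term $(1-\eps^2)^m\E[z(X_m)\mathbf 1_{\tau>m}]\to 0$ in your iteration and, in the paper, for condition (c) of the OST.)

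The existence half of your proposal has a gap that deserves flagging. You write that you would ``verify by conditioning on the first move'' that the upper and lower game values satisfy \eqref{DPP}. This is the hardest step of the whole theorem, not a routine one: for sup-inf / inf-sup values over general history-dependent strategies, establishing the one-step dynamic programming identity directly involves nontrivial measurable-selection and interchange arguments (see the reference \cite{QS} cited in the paper precisely for this issue). The paper's Perron construction sidesteps it entirely: it builds \emph{some} bounded DPP solution $(u^\eps,v^\eps)$ without ever mentioning strategies, and only then, in a separate step, shows via sub/supermartingales and OST that this solution is squeezed between $\sup_{S_I}\inf_{S_{II}}$ and $\inf_{S_{II}}\sup_{S_I}$, which simultaneously yields that the game has a value and that it solves the DPP. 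If you keep your uniqueness argument, you should still borrow the paper's Perron-plus-OST route for existence, or else fill in the selection/measurability details behind ``conditioning on the first move.''
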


Notice that \eqref{DPP} can be see as a sort of mean value property (or a discretization at scale of size $\eps$) for the system \eqref{ED1}. 
Let see intuitively why the DPP \eqref{DPP} holds. Playing in the first board,
at each step Player $\I$ chooses the next position of the game with probability
$\frac{1-\eps^2}{2}$ and aims to obtain $\inf_{y \in B_{\eps}(x)}u^{\eps}(y)$
 (recall this player seeks to minimize the expected payoff); with probability
$\frac{1-\eps^2}{2}$ it is Player $\II$ who choses and aims to obtain $\sup_{y \in B_{\eps}(x)}u^{\eps}(y)$ and finally
with probability $\eps^2$ the board changes (and therefore $v^\eps (x)$ comes into play). 
Playing in the second board, with probability $1-\eps^2$ the point moves at random (but stays in the second board)
and hence the term $\kint_{B_{\eps}(x)}v^{\eps}(y)dy $ appears, but with probability $\eps^2$ the board is changed and 
hence we have $u^\eps (x)$ in the second equation.
The equations
in the (DPP) follow just by considering all the possibilities.
 Finally, the final payoff at $x \not\in \Omega$ is given by $ \ol{f} (x)$ in the first board and by $ \ol{g}(x)$
 in the second board, giving the exterior conditions $u^{\eps}(x) = \ol{f}(x)$ and $v^{\eps}(x) = \ol{g}(x)$. 

Our next goal is to look for the limit as $\eps \to 0$. 
Our main result in this paper is to show that,
under our regularity conditions on the data ($\partial \Omega$ verifies the uniform exterior ball condition, and
$f$ and $g$ are Lipschitz),
these value functions $u^\eps, v^\eps$ converge uniformly
in $\overline{\Omega}$
to a pair of continuous limits $u,v$ that are characterized as being the unique viscosity solution to \eqref{ED1}.

\begin{theorem} \label{teo.converge2} 
Let $(u^\eps, v^\eps)$ be the values of the game.
Then, there exists a pair of continuous functions in $\overline{\Omega}$, $(u,v)$, such that
\begin{equation*}
u^\eps \to u, \quad \mbox{and} \quad v^\eps \to v, \qquad \mbox{ as } \eps \to 0,
\end{equation*}
uniformly in $\overline{\Omega}$.
Moreover, the limit $(u,v)$ is characterized as the unique viscosity
solution to the system \eqref{ED1}  (with a constant $\kappa=\frac{1}{|B_{1}(0)|}\int_{B_{1}(0)}z_{j}^{2}dz. 
$ that depends only on the dimension).
\end{theorem}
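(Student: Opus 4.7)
\medskip

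\textbf{Plan of proof.} The plan is the classical three step scheme for passing from a discrete dynamic programming principle to a viscosity limit: uniform bounds and equicontinuity up to the boundary, Arzel\`a--Ascoli to extract a convergent subsequence, and viscosity identification of the limit via Taylor expansion in the DPP, closed by the uniqueness result announced elsewhere in the paper.

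First I would produce uniform-in-$\eps$ bounds for $(u^\eps,v^\eps)$. Since $\overline{f}$ and $\overline{g}$ are bounded, iterating \eqref{DPP} gives the two-sided bounds $\min\{\inf \overline{f},\inf \overline{g}\}\le u^\eps, v^\eps\le \max\{\sup \overline{f},\sup \overline{g}\}$. The crucial preliminary is \emph{asymptotic equicontinuity up to the boundary}: I would show that for every $\delta>0$ there exists $r>0$ and $\eps_0>0$ such that
\begin{equation*}
|u^\eps(x)-\overline f(y)|+|v^\eps(x)-\overline g(y)|<\delta\qquad\text{whenever }y\in\partial\Omega,\ |x-y|<r,\ \eps<\eps_0.
\end{equation*}
This is the main obstacle and uses the uniform exterior ball condition crucially. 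Given $y\in\partial\Omega$ and the exterior ball $B_\theta(z_y)$, the function $\psi(x)=|x-z_y|^2$ is a subsolution/supersolution of the relevant discrete equations up to $O(\eps^2)$: for the random walk piece one has $\kint_{B_\eps(x)}\psi(w)dw=\psi(x)+\kappa\eps^2$, while for the Tug-of-War piece $\frac12\sup_{B_\eps(x)}\psi+\frac12\inf_{B_\eps(x)}\psi=\psi(x)+\eps^2$. Pulling the token toward $y$ (Player~I's strategy in the first board, and the averaged random walk in the second) together with the $\eps^2$ switching between the two boards yields, via an optional stopping/martingale argument on a suitable affine function of $\sqrt{\psi(x)}-\theta=\dist(x,\overline B_\theta(z_y))$, the required control of the exit position and of the number of board changes before the first exit. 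Combined with the Lipschitz continuity of $\overline f,\overline g$, this delivers the boundary estimate. Interior equicontinuity then follows from a standard coupling argument: given $x,x'\in\Omega$, a coupled pair of games started at $x$ and $x'$ (translating one player's motion to preserve the vector $x-x'$ until one copy hits the boundary, at which point the boundary estimate takes over) yields $|u^\eps(x)-u^\eps(x')|+|v^\eps(x)-v^\eps(x')|\le \omega(|x-x'|)$ with a modulus independent of $\eps$.

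With uniform equicontinuity in hand, Arzel\`a--Ascoli gives a subsequence $(u^{\eps_k},v^{\eps_k})$ converging uniformly on $\overline\Omega$ to continuous limits $(u,v)$ taking the boundary values $(f,g)$. To show $(u,v)$ is a viscosity solution of \eqref{ED1}, I would argue in the standard Barles--Souganidis style for each equation separately. Take a smooth $\phi$ touching $u$ from above at an interior point $x_0$ with $\nabla\phi(x_0)\neq 0$; then $u^{\eps_k}-\phi$ has an approximate minimum at some $x_k\to x_0$ (after a vanishing perturbation). Plug $\phi+c_k$ into the first line of \eqref{DPP} and use the pointwise expansion
\begin{equation*}
\tfrac12\sup_{B_\eps(x)}\phi+\tfrac12\inf_{B_\eps(x)}\phi=\phi(x)+\tfrac{\eps^2}{2}\Delta_\infty\phi(x)+o(\eps^2),
\end{equation*}
which is the well known asymptotic mean value property for $\Delta_\infty$. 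Rearranging the DPP, dividing by $\eps_k^2$, and letting $k\to\infty$ yields $-\tfrac12\Delta_\infty\phi(x_0)+u(x_0)-v(x_0)\le 0$, i.e.\ the subsolution inequality at $x_0$; the supersolution inequality and the case $\nabla\phi(x_0)=0$ are handled analogously, the latter by testing with small perturbations or using the upper/lower semicontinuous envelope of $\Delta_\infty$. For the second equation one uses the classical Taylor expansion
\begin{equation*}
\kint_{B_\eps(x)}\phi(y)\,dy=\phi(x)+\tfrac{\eps^2}{2}\kappa\,\Delta\phi(x)+o(\eps^2),\qquad \kappa=\tfrac{1}{|B_1(0)|}\int_{B_1(0)}z_j^2\,dz,
\end{equation*}
and the same rearrangement gives $-\tfrac{\kappa}{2}\Delta\phi(x_0)+v(x_0)-u(x_0)\le 0$.

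Finally, since the limit $(u,v)$ is a viscosity solution of \eqref{ED1} attaining the boundary data continuously, the uniqueness statement (proved in the PDE section of the paper) forces the whole family $(u^\eps,v^\eps)$ to converge to this unique solution, not merely along subsequences. The subtle points worth stressing are: (i) the boundary estimate, which requires controlling the coupled two-board random dynamics near $\partial\Omega$ via the exterior ball; and (ii) the case $\nabla\phi(x_0)=0$ in the infinity Laplacian equation, which needs the usual semicontinuous relaxation of $\Delta_\infty$ when identifying the viscosity sub/supersolutions.
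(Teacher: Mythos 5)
Your overall architecture matches the paper's: uniform $L^\infty$ bounds, asymptotic equicontinuity up to the boundary (driven by the exterior ball), an Arzel\`a--Ascoli extraction, viscosity identification of the limit via Taylor/mean-value expansions in the DPP (with the semicontinuous envelope of $\Delta_\infty$ when the test gradient vanishes), and uniqueness to upgrade subsequential to full convergence. The steps after the boundary estimate are essentially the paper's argument and are fine.

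The gap is in the boundary estimate itself, which is the technical heart. You propose to run an optional--stopping argument on $\psi(x)=|x-z_y|^2$ (or an affine function of $|x-z_y|-\theta$). But your own computations show $\tfrac12\sup_{B_\eps}\psi+\tfrac12\inf_{B_\eps}\psi=\psi(x)+\eps^2$ and $\vint_{B_\eps}\psi=\psi(x)+\kappa\eps^2$; in both boards $\psi(x_k)$ is a (sub)martingale with strictly positive drift $\sim\eps^2$ per step, and by Jensen so is $|x_k-z_y|$ under the random walk. A submartingale barrier cannot control the exit \emph{position} from above: optional stopping gives only a \emph{lower} bound on $\E[\psi(x_\tau)]$, and since $\E[\tau]\sim\theta^2\eps^{-2}$ the accumulated drift is $O(\theta^2)$, not $o(1)$. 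What you need near $y$ is a barrier that is a \emph{supermartingale} under the game dynamics, vanishes at $y$, and grows with distance to $y$; then optional stopping bounds the exit position. This is precisely where the paper uses two different, carefully tailored barriers: for Tug-of-War, $N_k=|x_k-y|+\eps^3 2^{-k}$, which the pull-toward-$y$ strategy turns into a supermartingale (and $N_k^2+k\eps^2/3$ into a supermartingale for the time bound); for the random walk, the translated fundamental solution $\mu(x)=\theta^{-(N-2)}-|x-z_y|^{-(N-2)}$, which is harmonic, hence an exact martingale, is monotone in $|x-z_y|$, and vanishes at $y$ — this is the role the exterior ball plays. Your $|x-z_y|^2$ has none of these properties for the random walk board.

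A second, smaller gap: you allude to "control of the number of board changes" but don't say how to combine the two boards. The paper does this by conditioning on the event $A$ that the board does not switch in the first $\lceil a/\eps^2\rceil$ steps (probability $\approx e^{-a}$, controllable) and then invoking the single-board lemmas on $A$; this is a clean decoupling you would need to make explicit, since a single coupled martingale on the product state space with a barrier that works simultaneously under both dynamics is exactly what your proposed $\psi$ fails to provide.
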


\begin{remark} {\rm
If we impose that the probability of moving random in the second board is $1-K\eps^2$ (and
hence the probability of changing from the second to the first board is $K\eps^2$) with the same computations we obtain 
$$
v^{\eps}(x)=K \eps^{2}u^{\eps}(x)+(1- K \eps^{2})\kint_{B_{\eps}(x)}v^{\eps}(y)dy
$$
 as the second equation in the DPP (the first equation and the exterior data remain unchanged). Passing to the limit we get
 $$
 -   \displaystyle \frac{\kappa}{2 K } \Delta v(x) + v(x) - u(x)=0 ,
 $$
 and hence, choosing $K$, we can obtain any positive constant in front of the Laplacian in \eqref{ED1}. 
}
\end{remark}

To prove that the sequences $\{u^\eps, v^\eps\}_\eps$ converge we will apply an Arzel\`{a}-Ascoli type lemma.
To this end we need to show a sort of asymptotic continuity that is based on estimates
for both value functions $(u^\eps, v^\eps)$ near the boundary (these estimates can be extended
to the interior via a coupling probabilistic argument). 
In fact, to see an asymptotic continuity close to a boundary point, we are able to show that both players have strategies that 
enforce the game to end near a point $y\in \partial \Omega$ with 
high probability if we start close to that point no mater the strategy  chosen by the other player.  
This allows us to obtain a sort of asymptotic equicontinuity close to the boundary leading to
uniform convergence in the whole $\overline{\Omega}$. Note that, in general the value functions
$(u^\eps,v^\eps)$ are discontinuous in $\Omega$ (this is due to the fact that we make discrete steps)
and therefore showing uniform convergence to a continuos limit is a difficult task.

Let us see formally why a uniform limit $(u,v)$ is a solution to equation \eqref{ED1}.
By subtracting $u^\eps (x)$ and dividing by $\eps^2$ on both sides we get
\[
0=(v^{\eps}(x)-u^{\eps}(x))+(1-\eps^{2})
\left\{ \frac{ \half \sup_{y \in B_{\eps}(x)} u^{\eps}(y) 
+ \half \inf_{y \in B_{\eps}(x)}u^{\eps}(y) -u^{\eps}(x) }{\eps^{2}} \right\}.
\]
which in the limit approximates the first equation in our system \eqref{ED1}
(the terms into brackets approximate
 the second derivative of $u$ in the direction of its gradient).
 Similarly, the second equation in the DPP can be written as
 \[
 0 = (u^{\eps}(x)-v^{\eps}(x))+(1-\eps^{2})\kint_{B_{\eps}(x)} \frac{(v^{\eps}(y)- v^{\eps}(x))}{\eps^2} dy
 \]
 that approximates solutions to the second equation in \eqref{ED1}
.

\medskip

The paper is organized as follows: in Section \ref{sect-prelim} we include 
the precise definition of what we will understand by a viscosity solution for our system and we state a key preliminary result
from probability theory
(the Optional Stopping Theorem);
Section \ref{sect-game2} contains a detailed description of the game, also in Section \ref{sect-game2} we show that there is
a value of the game that satisfies the DPP \eqref{DPP} and prove uniqueness for the DPP (we prove Theorem \ref{teo.dpp2}); 
next, in Section \ref{sect-convergence} 
we analyze the game and show that value functions converge uniformly along subsequences to a pair of continuous functions; 
in Section \ref{sect-limiteviscoso} we prove that the limit is a viscosity solution to our system (up to this point we obtain the first
part of Theorem \ref{teo.converge2}) and in Section \ref{sect-uniqueness} we show uniqueness
of viscosity solutions to the system, ending the proof of Theorem \ref{teo.converge2}.
Finally, in Section \ref{sect.extensiones} we collect some comments on possible extensions of our results.

\section{Preliminaries.} \label{sect-prelim}

In this section we include the precise definition of what we understand as a viscosity 
solution for the system \eqref{ED1} and we include the precise statement of the 
Optional Stopping Theorem
that will be needed when dealing with the probabilistic part of our arguments. 

\subsection{Viscosity solutions}
We begin by stating the definition of a viscosity solution to a fully nonlinear second order elliptic PDE.
We refer to
\cite{CIL} for general results on viscosity solutions.
Fix a function
\[
P:\Omega\times\R\times\R^N\times\mathbb{S}^N\to\R
\]
where $\mathbb{S}^N$ denotes the set of symmetric $N\times N$ matrices.
We want to consider the PDE 
\begin{equation}
\label{eqvissol}
P(x,u (x), Du (x), D^2u (x)) =0, \qquad x \in \Omega.
\end{equation}

The idea behind Viscosity Solutions is to use the maximum principle in order to 
``pass derivatives to smooth test functions''. This idea allows us to consider operators in non divergence form.
We will assume that $P$ is degenerate elliptic, that is, $P$ satisfies a monotonicity property with respect
to the matrix variable, that is,
\[
X\leq Y \text{ in } \mathbb{S}^N \implies P(x,r,p,X)\geq P(x,r,p,Y)
\]
for all $(x,r,p)\in \Omega\times\R\times\R^N$.

Here
we have an equation that involves the $\infty$-laplacian that is not well defined when the gradient vanishes. 
In order to be able to handle this issue, we need to consider the lower semicontinous, $P_*$, and upper semicontinous, $P^*$, envelopes of 
$P$.
These functions are given by
\[
\begin{array}{ll}
P^*(x,r,p,X)& \displaystyle =\limsup_{(y,s,w,Y)\to (x,r,p,X)}P(y,s,w,Y),\\
P_*(x,r,p,X)& \displaystyle =\liminf_{(y,s,w,Y)\to (x,r,p,X)}P(y,s,w,Y).
\end{array}
\]
These functions coincide with $P$ at every point of continuity of $P$ and are lower and upper semicontinous respectively.
With these concepts at hand we are ready to state the definition of a viscosity solution to
\eqref{eqvissol}.

\begin{definition}
\label{def.sol.viscosa.1}
A lower semi-continuous function $ u $ is a viscosity
supersolution of \eqref{eqvissol} if for every $ \phi \in C^2$ such that $ \phi $
touches $u$ at $x \in \Omega$ strictly from below (that is, $u-\phi$ has a strict minimum at $x$ with $u(x) = \phi(x)$), we have
$$P^*(x,\phi(x),D \phi(x),D^2\phi(x))\geq 0.$$

An upper semi-continuous function $u$ is a subsolution of \eqref{eqvissol} if
for every $ \psi \in C^2$ such that $\psi$ touches $u$ at $ x \in
\Omega$ strictly from above (that is, $u-\psi$ has a strict maximum at $x$ with $u(x) = \psi(x)$), we have
$$P_*(x,\phi(x),D \phi(x),D^2\phi(x))\leq 0.$$

Finally, $u$ is a viscosity solution of \eqref{eqvissol} if it is both a suoer- and a subsolution.
\end{definition}

In our system \eqref{ED1} we have two equations given by the functions
$$
F_1 (x,u,p,X)  =  - \frac12 \langle X\frac{p}{|p|} , \frac{p}{|p|} \rangle + u - v(x)=0
$$
and 
$$
F_2  (x,v,q,Y) = - \frac{\kappa}{2} trace (Y) + v - u(x)=0.
$$

Then, the definition of a viscosity solution for the system \eqref{ED1} that we will use here is the following.

\begin{definition}
\label{def.sol.viscosa.system}
A pair of continuous functions $ u, v :\overline{\Omega} \mapsto \mathbb{R} $ is a viscosity
solution of \eqref{ED1} if 
$$
u|_{\partial \Omega} = f, \qquad v|_{\partial \Omega} = g,
$$
$$
\mbox{$u$ is a viscosity solution to 
$F_1 (x,u,D u, D^2u) = 0$}
$$
$$
and 
$$
$$
\mbox{$v$ is a viscosity solution to 
$
F_2 (x,v, D v, D^2v) = 0$}
$$
in the sense of Definition \ref{def.sol.viscosa.1}.
\end{definition}

\begin{remark} {\rm
We remark that, according to our definition, in the equation for $u$, 
as the other component $v$ is continuous, we have that $F_1$ depends on $x$ via $v(x)$ 
(and similarly for $F_2$ that depend on $x$ as $u(x)$). 
That is, we understand a solution to \eqref{ED1} as a pair of continuous up to the boundary functions
that satisfies the boundary conditions pointwise and such that $u$ is a viscosity solution to 
the first equation in the system in the viscosity sense (with $v$ as a fixed continuos function of $x$ in $F_1$) and $v$ solves the second equation 
in the system (regarding $u$ as a fixed function of $x$ in $F_2$).

Also notice that we have that both $u$ and $v$ are assumed to be continuous in $\overline{\Omega}$ and then the boundary data
$f$ and $g$
are taken on $\partial \Omega$ with continuity. 
}
\end{remark}

\subsection{Probability. The Optional Stopping Theorem.}
We briefly recall (see \cite{Williams}) that a sequence of random variables
$\{M_{k}\}_{k\geq 1}$ is a supermartingale (submartingales) if
$$ \E[M_{k+1}\arrowvert M_{0},M_{1},...,M_{k}]\leq M_{k} \ \ (\geq)$$
Then, the Optional Stopping Theorem, that we will call {\it (OSTh)} in what follows, says:
given $\tau$ a stopping time such that one of the following conditions hold,
\begin{itemize}
\item[(a)] The stopping time $\tau$ is bounded a.s.;
\item[(b)] It holds that $\E[\tau]<\infty$ and there exists a constant $c>0$ such that $$\E[M_{k+1}-M_{k}\arrowvert M_{0},...,M_{k}]\leq c;$$
\item[(c)] There exists a constant $c>0$ such that $|M_{\min \{\tau,k\}}|\leq c$ a.s. for every $k$.
\end{itemize}
Then 
$$ \E[M_{\tau}]\leq \E [M_{0}] \ \ (\geq)$$
if $\{M_{k}\}_{k\geq 0}$ is a supermartingale (submartingale).
For the proof of this classical result we refer to \cite{Doob,Williams}. 

\section{A two-player game} \label{sect-game2}

In this section, we describe in detail the two-player zero-sum game 
presented in the introduction.
Let $\Omega \subset\R^N$ be a bounded smooth domain and fix $\eps>0$. 
The game takes place in two boards (that we will call board 1 and board 2), 
that are two copies of $\mathbb{R}^N$
with the same domain $\Omega$ inside. Fix two Lipschitz functions 
$\ol{f}:\R^{n} \backslash \Om \rightarrow \R$ and $\ol{g}:\R^{N} \backslash \Om \rightarrow \R$ 
that are going to give the final payoff of the game 
when we exit $\Omega$ in board 1 and 2 respectively.

A token is placed at $x_0\in\Omega $ in one of the two boards. 
When we play in the first board, with probability $1-\eps^2$ we play \textit{Tug-of-War}, 
the game introduced in \cite{PSSW}, a fair coin (with probability $\frac12$ of heads and tails) is tossed 
and the player who win the coin toss chooses the next position of the game
inside the ball $B_\eps (x_0)$ in the first board. With probability $\eps^2$ we jump to the other
board, the next position of the toke in $x_0$ but now in board 2. 
If $x_0$ is in the second board then with probability $1-\eps^2$ the new position of the game is 
chosen at random in the ball $B_\eps (x_0)$ (with uniform probability) and with probability $\eps^2$
the position jumps to the same $x_0$ but in the first board.
The position of the token will be denoted by $(x,j)$ where $x \in \mathbb{R}^N$ and $j=1,2$ ($j$ encodes
the boars in which the token is at position $x$). 
Then, after one movement, the players continue playing with the same
rules from the new position of the token $x_1$ in its corresponding board, 1 or 2.
The game ends when 
the position of the token leaves the domain $\Omega$. That is, let $\tau$ be the stopping time given
by the first time at which $x_{\tau} \in \R^{N} \backslash \Omega$. 
If $x_{\tau}$ is in the first board then Player I gets $\ol{f}(x_{\tau})$ (and Player II pays that quantity), while
in the token leaves $\Omega$ in the second board 
Player I gets $\ol{g}(x_{\tau})$ (and Player II pays that amount). 
We have that the game generates a sequence of states
$$
P=\{ (x_{0},j_{0}),(x_{1},j_{1}),...,(x_{\tau},j_{\tau})\}
$$
with $j_{i}\in\{1,2\}$ and $x_{i}$ in the board $j_{i}$. 
The dependence of the position of the token 
in one of the boards, $j_i$, will be made explicit only when 
needed.

A strategy $S_\I$ for Player~I is a function defined on the
partial histories that gives the next position of the game provided Player $\I$ wins the coin toss
(and the token is and stays in the first board)
\[
S_\I{\left((x_0,j_{0}),(x_1,,j_{1}),\ldots,(x_n,,1)\right)}= (x_{n+1},1) \qquad \mbox{with } x_{n+1} \in B_\eps (x_n).
\]
Analogously, a strategy $S_\II$ for Player~II is a function defined on the
partial histories that gives the next position of the game provided Player $\II$ is who wins the coin toss
(and the token stays at the first board).

When the two players fix their strategies $S_I$ and $S_{II}$ we can compute the expected outcome as follows:
Given the sequence $x_0,\ldots,x_n$ with $x_k\in\Om$, if $x_k$ belongs to the first board, the next game position is distributed according to
the probability
\[
\pi_{S_\I,S_\II,1}((x_0,j_0),\ldots,(x_k,1),{A}, B)= \frac{1-\eps^2}{2} \delta_{S_\I ((x_0,j_0),\ldots,(x_k,1))} (A) +
\frac{1-\eps^2}{2} \delta_{S_\II ((x_0,j_0),\ldots,(x_k,1))} (A) + \eps^2 \delta_{x_k} (B).
\]
Here $A$ is a subset in the first board while $B$ is a subset in the second board. 
If 
$x_k$ belongs to the second board, the next game position is distributed according to
the probability
\[
\pi_{S_\I,S_\II,2}((x_0,j_0),\ldots,(x_k,2),{A}, B)= (1-\eps^2) U(B_\eps(x_k)) (B) +
\eps^2 \delta_{x_k} (A).
\]

By using the Kolmogorov's extension theorem and the one step transition probabilities, we can build a
probability measure $\mathbb{P}^{x_0}_{S_\I,S_\II}$ on the
game sequences (taking onto account the two boards). The expected payoff, when starting from $(x_0,j_0)$ and
using the strategies $S_\I,S_\II$, is
\begin{equation}
\label{eq:defi-expectation}
\mathbb{E}_{S_{\I},S_\II}^{(x_0,j_0)} [ h (x_\tau) ]=\int_{H^\infty} h (x_\tau)  \,  d
\mathbb{P}^{x_0}_{S_\I,S_\II}
\end{equation}
(here we use $h=f$ if $x_\tau$ is in the first board or $h=g$ if $x_\tau$ is in the second board).

The \emph{value of the game for Player I} is given by
\[
u^\eps_\I(x_0)=\inf_{S_\I}\sup_{S_{\II}}\,
\mathbb{E}_{S_{\I},S_\II}^{(x_0,1)}\left[h (x_\tau) \right]
\]
for $x_0\in\Omega$ in the first board ($j_0 =1$), and by 
\[
v^\eps_\I(x_0)=\inf_{S_\I}\sup_{S_{\II}}\,
\mathbb{E}_{S_{\I},S_\II}^{(x_0,2)}\left[h (x_\tau) \right]
\]
for $x_0\in\Omega$ in the second board ($j_0 =2$).

The \emph{value of the game for Player II} is given by the same formulas just reversing the $\inf$--$\sup$,
\[
u^\eps_\II(x_0)=\sup_{S_{\II}}\inf_{S_\I}\,
\mathbb{E}_{S_{\I},S_\II}^{(x_0,1)}\left[h (x_\tau) \right], 
\]
for $x_0$ in the first board and 
\[
v^\eps_\II(x_0)=\sup_{S_{\II}}\inf_{S_\I}\,
\mathbb{E}_{S_{\I},S_\II}^{(x_0,2)}\left[h (x_\tau) \right], 
\]
for $x_0$ in the second board.

Intuitively, the values $u_\I(x_0)$ and $u_\II(x_0)$ are the best
expected outcomes each player can guarantee when the game starts at
$x_0$ in the first board while $v_\I(x_0)$ and $v_\II(x_0)$ are the best
expected outcomes for each player in the second board.

If $u^\eps_\I= u^\eps_\II$ and $v^\eps_\I= v^\eps_\II$, we say that the game has a value.

Before proving that the game has a value,
let us observe that the game ends almost surely no matter the strategies used by the players, that is $\P (\tau =+\infty) =0$, and therefore
the expectation \eqref{eq:defi-expectation} is well defined. This fact is due to the random movements that we
make in the second board (that kicks us out of the domain in a finite number of plays without changing boards with positive probability). 

\begin{proposition} \label{prop.juego.termina}
We have that 
$$
\mathbb{P} \Big(\mbox{the game ends in a finite number of plays}\Big)=1.
$$
\end{proposition}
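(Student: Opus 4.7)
The plan is to produce a uniform lower bound on the probability that the game exits $\Om$ within a bounded number $K$ of plays, regardless of the current state and of the strategies chosen by the players. Once such $\delta>0$ and $K$ are in hand, the Markov property gives $\P(\tau>nK)\le (1-\delta)^n$ for every $n$, and letting $n\to\infty$ yields $\P(\tau=+\infty)=0$.

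The key observation is that on board~2 the next position is chosen uniformly at random in $B_\eps(x)$, independently of the strategies. I would fix a unit vector $\nu\in\R^N$ and $R>0$ large enough that $\Om\subset\bigl\{y\in\R^N:|\langle y,\nu\rangle|<R\bigr\}$. A uniform random point $y\in B_\eps(x)$ satisfies $\langle y-x,\nu\rangle\ge\eps/2$ with probability at least some $c_N>0$ depending only on the dimension. Hence, if the token sits at $x\in\Om$ on board~2, the probability that the next play both keeps the token on board~2 and displaces it by at least $\eps/2$ in the direction $\nu$ is at least $(1-\eps^2)c_N$. Setting $N_0=\lceil 4R/\eps\rceil$, the probability that this favorable event occurs for $N_0$ consecutive plays is at least $p_0:=\bigl((1-\eps^2)c_N\bigr)^{N_0}>0$, and on this event the $\nu$-coordinate of the token grows by at least $N_0\,\eps/2\ge 2R$, which forces the game to have exited $\Om$ at some intermediate time.

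If instead the token currently lies on board~1, I would prepend one play in which, with probability $\eps^2$, the token jumps to board~2 at the same position; this probability is again independent of the strategies. Combining this with the previous estimate, from any state $(x,j)$ with $x\in\Om$ and any strategies $S_\I,S_\II$, the probability that $\tau$ occurs within the next $K:=N_0+1$ plays is bounded below by $\delta:=\eps^2 p_0>0$. Splitting time into consecutive blocks of length $K$ and applying the tower property yields $\mathbb{P}^{x_0}_{S_\I,S_\II}(\tau>nK)\le (1-\delta)^n$ for every $n$, whence the conclusion.

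The main point, and the reason the argument goes through, is that the bound $\delta$ must be uniform in the players' choices. This is why one cannot rely on board~1 alone: in a pure Tug-of-War the two players could conspire to keep the token inside $\Om$ indefinitely. The proof therefore uses only sources of randomness that the players do not control, namely the uniform random move on board~2 and the $\eps^2$-coin that switches boards, with the latter guaranteeing that board~2 is visited sufficiently often.
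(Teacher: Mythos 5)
Your proof is correct and follows essentially the same strategy as the paper's: use the uncontrolled uniform random step on board~2 (which pushes the token a fixed distance in a chosen direction with positive probability) to exit $\Om$ in at most $\lceil 4R/\eps\rceil$ consecutive favorable plays, and use the $\eps^2$ board-switch probability to guarantee that board~2 is reached from board~1. The only cosmetic difference is that you package the two steps into a single uniform per-block lower bound $\delta=\eps^2 p_0$ over all states and strategies and then invoke geometric decay, whereas the paper treats the board~1 and board~2 starting cases sequentially and a bit more informally; your version is slightly tidier but is the same argument.
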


\begin{proof} Let us start by showing that the game ends in a finite number of plays if we start with the token in the second board. 
Let $\xi\in\R^{n}$ with $| \xi |=1$ be a fixed direction. Consider the set
$$
T_{\xi,x_{k}}=\Big\{ y\in\R^{n}:y\in B_{\eps}(x_{k})\wedge \langle y-(x_{k}+\frac{\eps}{2}\xi),\xi\rangle\geq 0 \Big\}
$$
that is a part of the ball where the points are at distance $\frac{\eps}{2}$ from the center and are in the same direction. 
Then, starting from any point in $\Om$ if in every play we choose a point in $T_{\xi,x_{k}}$ (without changing boards) 
in at most $\lceil \frac{4R}{\eps}\rceil$ steps we will be a out of $\Om$ (here $R=diam(\Om)$). 
As the set $T_{\xi,x_{k}}$ has positive measure it holds that
$$
\mathbb{P}(x_{k+1}\in T_{\xi,x_{k}}\arrowvert x_{k}):=\alpha > 0.
$$
Therefore, we have a positive probability of ending the game in less than $\lceil \frac{4R}{\eps}\rceil$ plays 
$$
\mathbb{P}(\mbox{the game ends in $\lceil \frac{4R}{\eps}\rceil$ plays})\geq [(1-\eps^{2})\alpha]^{K}=r>0.
$$
Hence
$$
\mathbb{P}(\mbox{the game continues after $\lceil \frac{4R}{\eps}\rceil$ plays})\leq 1-r,
$$
and then  
$$
\mathbb{P}(\mbox{the game does not end in a finite number of plays}) =0.
$$ 

Now, if we start in the first board the probability of not changing the board in $n$ plays is $(1-\eps^{2})^{n}$. Therefore,
we will change to the second board (or end the game) with probability one in a finite number of plays. Hence, 
we end the game or we are in the previous situation with probability one 

This implies that the game ends almost surely in a finite number of plays.
\end{proof}

To see that the game has a value, we first observe that we have existence
of $(u^\eps, v^\eps)$, a pair of functions that satisfies the DPP.
The existence of such a pair can be obtained by Perron's method. In fact, let us start considering the following set (that is composed by pairs of
functions 
that are sub solutions to our DPP). Let
\begin{equation} \label{kkk}
C=\max \Big\{ \|\ol{f}\|_\infty , \|\ol{g}\|_\infty
\Big\},
\end{equation}
and consider the set of functions 
\begin{equation}
\label{A}
{A} =\displaystyle \Big\{ (z^{\eps},w^{\eps}) : \mbox{ are bounded above by $C$ and verify (\textbf{e}) } \Big\},
\end{equation}
with
\begin{equation}
\label{e} \tag{\bf{e}}
\displaystyle \left\lbrace
\begin{array}{ll}
 \displaystyle z^{\eps}(x)\leq\eps^{2}w^{\eps}(x)+(1-\eps^{2})\Big\{\half \sup_{y \in B_{\eps}(x)}z^{\eps}(y) + \half \inf_{y \in B_{\eps}(x)}z^{\eps}(y)\Big\} \qquad &  \ x \in \Omega,  \\[10pt]
   \displaystyle w^{\eps}(x)\leq\eps^{2}z^{\eps}(x)+(1-\eps^{2})\kint_{B_{\eps}(x)}w^{\eps}(y)dy  \qquad &  \ x \in \Omega,  \\[10pt]
z^{\eps}(x) \leq \ol{f}(x) \qquad & \ x \in \R^{N} \backslash \Omega,  \\[10pt]
w^{\eps}(x) \leq \ol{g}(x) \qquad & \ x \in \R^{N} \backslash \Omega. 
\end{array}
\right.
\end{equation}

\begin{remark} {\rm Notice that we need to impose that $(z^{\eps},w^{\eps})$ are bounded since 
$$
z^{\eps} (x) =
\left\{ 
\begin{array}{ll}
+\infty \qquad & x \in \Omega \\[5pt]
\ol{f} \qquad & x \not\in \Omega
\end{array}
\right.
\qquad
\mbox{and} 
\qquad
w^{\eps} (x) =
\left\{ 
\begin{array}{ll}
+\infty \qquad & x \in \Omega \\[5pt]
\ol{g} \qquad & x \not\in \Omega
\end{array}
\right.
$$
satisfy \textbf{e}.}
\end{remark}

Observe that 
 ${A} \neq \emptyset$. To see this fact, we just take 
 $z^{\eps}=-C$ and $ w^{\eps}=-C$
 with $C$ given by \eqref{kkk}.
Now we let
 \begin{equation}
 \label{u}
 u^{\eps}(x)=\sup_{(z^{\eps},w^{\eps})\in {A}}z^{\eps}(x) 
 \qquad \mbox{and} \qquad
   v^{\eps}(x)=\sup_{(z^{\eps},w^{\eps})\in {A}}w^{\eps}(x) .
   \end{equation}
Our goal is to show that in this way we find a solution to the DPP. 

\begin{proposition} \label{prop.DPP.tiene.sol}
The pair $(u^{\eps},v^{\eps})$ given by \eqref{u} is a solution to the DPP \eqref{DPP}
\end{proposition}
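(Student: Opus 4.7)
The plan is a classical Perron-type argument, essentially routine except for a one-point bump construction at the end.

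First I would verify that $(u^\eps,v^\eps)$ itself lies in $A$. For any $(z^\eps,w^\eps)\in A$, the pointwise bounds $z^\eps\le u^\eps$ and $w^\eps\le v^\eps$ (from the definition of the suprema in \eqref{u}) imply $\sup_{B_\eps(x)} z^\eps \le \sup_{B_\eps(x)} u^\eps$, $\inf_{B_\eps(x)} z^\eps \le \inf_{B_\eps(x)} u^\eps$, and $\kint_{B_\eps(x)} w^\eps \le \kint_{B_\eps(x)} v^\eps$. Feeding these into (\textbf{e}) and then taking the supremum over $A$ on the left-hand side shows that $(u^\eps,v^\eps)$ satisfies the four inequalities of (\textbf{e}); the uniform bound by $C$ and the exterior inequalities pass to the supremum automatically.

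Next I would upgrade the exterior inequalities to equalities. The direction $u^\eps\le \ol f$, $v^\eps\le \ol g$ on $\R^{N}\setminus\Omega$ is part of (\textbf{e}). For the reverse, I exhibit the explicit element of $A$ obtained by setting $z^\eps=\ol f$, $w^\eps=\ol g$ outside $\Omega$ and $z^\eps=w^\eps=-C$ on $\Omega$; since $|\ol f|,|\ol g|\le C$ the four inequalities in (\textbf{e}) are verified directly (both right-hand sides at any $x\in\Omega$ are at least $-C$), and this injects the missing lower bound into the supremum.

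The only delicate step is the interior equality. Arguing by contradiction, suppose the first inequality in (\textbf{e}) is strict for $(u^\eps,v^\eps)$ at some $x_0\in\Omega$, with slack
\[
\gamma := \eps^{2}v^{\eps}(x_{0})+(1-\eps^{2})\Big\{\half\sup_{B_\eps(x_0)} u^\eps+\half\inf_{B_\eps(x_0)} u^\eps\Big\}-u^{\eps}(x_{0})>0.
\]
Since the right-hand side is bounded by $C$, necessarily $u^\eps(x_0)<C$. Define $(\tilde z,\tilde w)$ by $\tilde z=u^\eps$ off $\{x_0\}$, $\tilde z(x_0)=u^\eps(x_0)+\delta$ for some $\delta\in(0,\min\{\gamma,\,C-u^\eps(x_0)\})$, and $\tilde w=v^\eps$. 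At every $x\ne x_0$, bumping the value at the single point $x_0$ can only raise $\sup_{B_\eps(x)}\tilde z$ and $\inf_{B_\eps(x)}\tilde z$, while $\kint_{B_\eps(x)}\tilde w$ is unchanged, so the right-hand sides of both interior inequalities can only grow while the left-hand sides are unchanged. At $x_0$ itself the strict gap $\gamma$ dominates $\delta$, so the first inequality is still verified, and the second is trivial since $\tilde z\ge u^\eps$ and $\tilde w=v^\eps$. The exterior bounds are unaffected, so $(\tilde z,\tilde w)\in A$ with $\tilde z(x_0)>u^\eps(x_0)$, contradicting the definition of $u^\eps$. An identical one-point bump of $v^\eps$ rules out strict inequality in the second equation. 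The hard part is precisely this bump bookkeeping: the monotonicity of $\sup_{B_\eps(x)}$, $\inf_{B_\eps(x)}$ and of the integral average under pointwise increases is what makes a single-point perturbation compatible with the subsolution inequalities at all neighboring $x$ with $x_0\in B_\eps(x)$.
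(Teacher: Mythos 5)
Your proof is correct and follows essentially the same Perron-type argument as the paper: show $(u^\eps,v^\eps)\in A$ by monotonicity of $\sup$, $\inf$, and the average under the pointwise bounds $z^\eps\le u^\eps$, $w^\eps\le v^\eps$, and then rule out any strict inequality by a one-point bump that would contradict maximality. The single notable variation is in the exterior part: you exhibit the explicit element of $A$ equal to $\ol f$, $\ol g$ off $\Omega$ and to $-C$ on $\Omega$ to inject the lower bound directly, whereas the paper treats exterior and interior strict inequalities uniformly by the same bump-at-a-point contradiction. Both routes are valid; your explicit-element shortcut is marginally more economical for the boundary case, and you are usefully more explicit than the paper in checking that the interior bump at $x_0$ preserves the subsolution inequalities at neighboring $x$ with $x_0\in B_\eps(x)$, a verification the paper leaves implicit.
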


\begin{proof}
First, let us see that $(u^{\eps},v^{\eps})$ belongs to the set ${A}$. To this end we first observe that 
$u^{\eps}$ y $v^{\eps}$ are bounded by $C$ and verify 
 $u^{\eps}(x)\leq \ol{f}(x)$ and $v^{\eps}(x)\leq \ol{g}(x)$ for $x\in\R^{N}\backslash\Om$. 
 Hence we need to check \eqref{e} for $x\in\Om$. Take $(z^{\eps},w^{\eps})\in {A}$ and fix $x\in\Om$. Then,
$$z^{\eps}(x)\leq\eps^{2}w^{\eps}(x)+(1-\eps^{2})\Big\{\half \sup_{y \in B_{\eps}(x)}z^{\eps}(y) + \half \inf_{y \in B_{\eps}(x)}z^{\eps}(y)\Big\}.$$
As $z^{\eps}\leq u^{\eps}$ and $w^{\eps}\leq v^{\eps}$ we obtain 
$$z^{\eps}(x)\leq\eps^{2}v^{\eps}(x)+(1-\eps^{2})\Big\{\half \sup_{y \in B_{\eps}(x)}u^{\eps}(y) + \half \inf_{y \in B_{\eps}(x)}u^{\eps}(y)\Big\}.$$
Taking supremum in the left hand sise we obtain
$$u^{\eps}(x)\leq\eps^{2}v^{\eps}(x)+(1-\eps^{2})\Big\{\half \sup_{y \in B_{\eps}(x)}u^{\eps}(y) + \half \inf_{y \in B_{\eps}(x)}u^{\eps}(y)\Big\}.$$
In an analogous way we obtain  
$$ v^{\eps}(x)\leq\eps^{2}u^{\eps}(x)+(1-\eps^{2})\kint_{B_{\eps}(x)}v^{\eps}(y)dy,$$
and we conclude that $(u^{\eps},v^{\eps}) \in {A}$.

To end the proof we need to see that $(u^{\eps},v^{\eps})$ verifies the equalities in the equations in condition \eqref{e}. 
We argue by contradiction and assume that there is a point $x_{0}\in\R^{n}$ where an inequality in \eqref{e} is strict.
First, assume that $x_{0}\in\R^{n}\backslash\Om$, and that we have $u^{\eps}(x_{0})<\ol{f}(x_{0})$.
Then, take $u^{\eps}_{0}$ defined by $u^{\eps}_{0}(x)=u^{\eps}(x)$ for $x\neq x_0$ and $u^{\eps}_{0}(x_{0})=\ol{f}(x_{0})$. 
The pair $(u^{\eps}_{0},v^{\eps})$ belongs to ${A}$ but $u^{\eps}_{0}(x_{0})>u^{\eps}(x_{0})$
which is a contradiction. We can argue is a similar way if $v^{\eps}(x_{0})<\ol{g}(x_{0})$. 
Next, we consider a point $x_{0}\in\Om$ with one of the inequalities in \textbf{e} strict. Assume that 
$$u^{\eps}(x_{0})<\eps^{2}v^{\eps}(x_{0})+(1-\eps^{2})\Big\{\half \sup_{y \in B_{\eps}(x_{0})}u^{\eps}(y) + \half \inf_{y \in B_{\eps}(x_{0})}u^{\eps}(y)\Big\}.$$
Let
$$ \delta=\eps^{2}v^{\eps}(x_{0})+(1-\eps^{2})\Big\{\half \sup_{y \in B_{\eps}(x_{0})}u^{\eps}(y) + \half \inf_{y \in B_{\eps}(x_{0})}u^{\eps}(y)\Big\}-u^{\eps}(x_{0})>0, $$
and consider the function $u^{\eps}_{0}$ given by;
$$
 u^{\eps}_{0} (x) =\left\lbrace
 \begin{array}{ll}
 u^{\eps}(x) &  \ \ x \neq x_{0},  \\[5pt]
 \displaystyle u^{\eps}(x)+\frac{\delta}{2} &  \ \ x =x_{0} . \\
 \end{array}
 \right.
$$
Observe that
$$u^{\eps}_{0}(x_{0})=u^{\eps}(x_{0})+\frac{\delta}{2}<\eps^{2}v^{\eps}(x_{0})+(1-\eps^{2})\Big\{\half \sup_{y \in B_{\eps}(x_{0})}u^{\eps}(y) + \half \inf_{y \in B_{\eps}(x_{0})}u^{\eps}(y)\Big\}$$
and hence
$$u^{\eps}_{0}(x_{0})<\eps^{2}v^{\eps}(x_{0})+(1-\eps^{2})\Big\{\half \sup_{y \in B_{\eps}(x_{0})}u^{\eps}_{0}(y) + \half \inf_{y \in B_{\eps}(x_{0})}u^{\eps}_{0}(y)\Big\}.$$
Then we have that $(u^{\eps}_{0},v^{\eps})\in {A}$ but $u^{\eps}_{0}(x_{0})>u^{\eps}(x_{0})$ reaching again a contradiction.

In an analogous way we can show that when 
$$v^{\eps}(x_{0})<\eps^{2}u^{\eps}(x_{0})+(1-\eps^{2})\kint_{B_{\eps}(x_{0})}v^{\eps}(y)dy,$$
we also reach a contradiction.
\end{proof}

Now, concerning the value functions of our game, we know that $u^\eps_\I\geq u^\eps_\II$ 
and $v^\eps_\I\geq v^\eps_\II$
(this is immediate from the definitions). Hence, 
to obtain uniqueness of solutions of the DPP and existence of value functions for our game, 
it is enough to show that $u^\eps_\II \geq u^\eps\geq u^\eps_\I$ and 
$v^\eps_\II \geq v^\eps\geq v^\eps_\I$. To show this result
we will use the \textit{OSTh} for sub/supermartingales
(see Section \ref{sect-prelim}).

\begin{theorem}
Gigen $\eps>0$ let $(u^{\eps},v^{\eps})$ a pair of functions that verifies the DPP \eqref{DPP}, then it holds that
$$
u^{\eps}(x_{0})=\sup_{S_{I}}\inf_{S_{II}}\E^{(x_{0},1)}_{S_{I},S_{II}}[h( x_{\tau})] 
$$
if $x_{0} \in \Om$ is in the first board and
$$
v^{\eps}(x_{0})=\sup_{S_{I}}\inf_{S_{II}}\E^{(x_{0},2)}_{S_{I},S_{II}}[h( x_{\tau})]
$$
 if $x_{0} \in \Om$ is in the second board. 
 
 Moreover, we can interchange $\inf$ with $\sup$ in the previous identities, that is, the game has a value. 
 This value can be characterized as the unique solution to the DPP.  
\end{theorem}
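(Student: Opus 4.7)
The plan is to combine the Dynamic Programming Principle \eqref{DPP} with the Optional Stopping Theorem applied to a carefully chosen process. I would start by introducing the unified function $W$ on the two-board state space given by $W(x,1)=u^\eps(x)$ and $W(x,2)=v^\eps(x)$, so that $W(x_k,j_k)$ records the DPP value at the current token position in its current board. A direct computation using \eqref{DPP} shows that when the token is in board~$1$ the one-step conditional expectation of $W$ equals $\eps^2 v^\eps(x_k)+\tfrac{1-\eps^2}{2}u^\eps(y_I)+\tfrac{1-\eps^2}{2}u^\eps(y_{II})$, with $y_I$ and $y_{II}$ the points proposed by the two players under their strategies; when the token is in board~$2$ it equals $\eps^2 u^\eps(x_k)+(1-\eps^2)\kint_{B_\eps(x_k)}v^\eps(y)\,dy=v^\eps(x_k)$ identically, because neither player controls the move there.

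To prove the inequality $u^\eps(x_0)\geq\sup_{S_I}\inf_{S_{II}}\E^{(x_0,1)}_{S_I,S_{II}}[h(x_\tau)]$, I would fix $\eta>0$ and build a Player~II strategy $S_{II}^\eta$ which, whenever the token is in board~$1$ at step~$k$, selects $y\in B_\eps(x_k)$ with $u^\eps(y)\leq\inf_{B_\eps(x_k)}u^\eps+\eta\,2^{-(k+1)}$. Using \eqref{DPP}, this choice forces, for every Player~I strategy $S_I$,
\[
\E\bigl[W(x_{k+1},j_{k+1})\mid\mathcal{F}_k\bigr]\leq W(x_k,j_k)+\eta\,2^{-(k+2)}
\]
in both board cases. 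Setting $c_k=\sum_{j=0}^{k-1}\eta\,2^{-(j+2)}$, the process $M_k=W(x_k,j_k)-c_k$ is then a supermartingale under $\mathbb{P}^{(x_0,1)}_{S_I,S_{II}^\eta}$. Since $W$ is bounded by the constant $C$ of \eqref{kkk} and the proof of Proposition~\ref{prop.juego.termina} actually yields exponential tails for $\tau$ (in particular $\E[\tau]<\infty$) with uniformly bounded increments, condition~(b) of OSTh applies (condition~(c) would work as well), yielding $\E[M_\tau]\leq u^\eps(x_0)$. As $c_\tau\leq\eta/2$, this produces $\E^{(x_0,1)}_{S_I,S_{II}^\eta}[h(x_\tau)]\leq u^\eps(x_0)+\eta/2$ for every $S_I$, and passing to $\inf_{S_{II}}$, then $\sup_{S_I}$, and finally $\eta\to 0$ gives the bound.

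The reverse inequality is established symmetrically: Player~I now uses $S_I^\eta$ picking $y$ with $u^\eps(y)\geq\sup_{B_\eps(x_k)}u^\eps-\eta\,2^{-(k+1)}$, so that $N_k=W(x_k,j_k)+c_k$ becomes a submartingale against any Player~II strategy, and OSTh gives $\E^{(x_0,1)}_{S_I^\eta,S_{II}}[h(x_\tau)]\geq u^\eps(x_0)-\eta/2$ uniformly in $S_{II}$. The same scheme starting at $(x_0,2)$ delivers the analogous formula for $v^\eps$. The ``moreover'' part follows at once: one always has $\sup_{S_I}\inf_{S_{II}}\E\leq\inf_{S_{II}}\sup_{S_I}\E$, while the same $S_{II}^\eta$ above also yields $\sup_{S_I}\E^{(x_0,1)}_{S_I,S_{II}^\eta}[h(x_\tau)]\leq u^\eps(x_0)+\eta/2$, pinching $\inf_{S_{II}}\sup_{S_I}\E$ at $u^\eps(x_0)$ too, so the game has value. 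Uniqueness of the DPP solution is then automatic, since any such solution must coincide pointwise with this game value on $\Omega$ and with the prescribed boundary data outside.

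The main obstacle I expect to require care is the introduction of the geometric correction $c_k$: exact optima of $u^\eps$ in an open ball need not be attained, and this summable correction is what turns near-optimal one-step choices into bona fide super/submartingales with arbitrarily small total error. The only other delicate ingredient is verifying the OSTh hypothesis uniformly in the opponent's strategy, but the exponential tail estimate implicit in the proof of Proposition~\ref{prop.juego.termina} together with the uniform $L^\infty$ bound on $W$ settles this.
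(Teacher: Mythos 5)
Your proposal is correct and follows essentially the same route as the paper's proof: build a near-optimal strategy for one player (with a summable geometric slack to cope with non-attained sup/inf), turn the DPP into a super/submartingale for the unified process recording $u^\eps$ or $v^\eps$ at the current board, apply OSTh using the finite stopping time and uniform bound, and then sandwich via $\sup\inf\leq\inf\sup$. The only cosmetic deviation is your cumulative correction $c_k=\sum_{j<k}\eta\,2^{-(j+2)}$ in place of the paper's pointwise term $\delta/2^{k}$; both serve the identical purpose and yield the same conclusion.
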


\begin{proof}
Given $\eps>0$ we have proved the existence of a solution to the DPP $(u^{\eps},v^{\eps})$. Fix $\delta>0$. 
Assume that we start with $(x_{0},1)$, that is, the initial position is at board 1. We choose a strategy for 
Player I as follows:
$$
x_{k+1}^{I}=S_{I}^{*}(x_{0},...,x_{k}) 
\qquad 
\mbox{is such that} \qquad
\sup_{y\in B_{\eps}(x_{k})}u^{\eps}(y) - \frac{\delta}{2^{k}}\leq u^{\eps}(x_{k+1}^{I}).
$$
Given this strategy for Player I and any strategy $S_{II}$ for Player II we consider the sequence of random variables
given by
$$
M_{k}=\left\lbrace
 \begin{array}{ll}
 \displaystyle u^{\eps}(x_{k})-\frac{\delta}{2^{k}} & \ \ {if } \ (j_{k}=1),  \\[10pt]
 \displaystyle v^{\eps}(x_{k})-\frac{\delta}{2^{k}} & \ \ {if } \ (j_{k} = 2).
 \end{array}
 \right.
$$
Let us see that $(M_{k})_{\kappa\geq 0}$ is a submartingale. 
To this end we need to estimate 
$$
\E^{(x_{0},1)}_{S_{I}^{*},S_{II}}[M_{k+1}\arrowvert M_{k}]=\E^{(x_{0},1)}_{S_{I}^{*},S_{II}}[M_{k+1}\arrowvert (x_{k},j_{k})].
$$
We consider two cases:

\textbf{Case 1:} Assume that $j_{k}=1$, then 
$$
\begin{array}{l}
\displaystyle 
\E^{(x_{0},1)}_{S_{I}^{*},S_{II}}[M_{k+1}\arrowvert (x_{k},1)] 
 \displaystyle =(1-\eps^{2})\E^{(x_{0},1)}_{S_{I}^{*},S_{II}}[M_{k+1}\arrowvert (x_{k},1)\wedge j_{k+1}=1]+\eps^{2}\E^{(x_{0},1)}_{S_{I}^{*},S_{II}}[M_{k+1}\arrowvert (x_{k},1)\wedge j_{k+1}=2].
\end{array}
$$
Her we used that the probability of staying in the same board is $(1-\eps^{2})$ and the probability of jumping to the other board is
$\eps^{2}$. Now, if $j_{k}=1$ and $j_{k+1}=2$ then $x_{k+1}=x_{k}$ (we just changed boards). 
On the other hand, if we stay in the first board we obtain
$$
\E^{(x_{0},1)}_{S_{I}^{*},S_{II}}[M_{k+1}\arrowvert (x_{k},1)]=(1-\eps^{2})\Big\{\half u^{\eps}(x_{k+1}^{I})+\half u^{\eps}(x_{k+1}^{II})-\frac{\delta}{2^{k+1}}\Big\}+\eps^{2}(v^{\eps}(x_{k})-\frac{\delta}{2^{k+1}}).
$$
Since we are using the strategies $S_{I}^{*}$ and $S_{II}$, it holds that 
$$
\sup_{y\in B_{\eps}(x_{k})}u^{\eps}(y) - \frac{\delta}{2^{k}}\leq u^{\eps}(x_{k+1}^{I})
\qquad
\mbox{and}\qquad
\inf_{y\in B_{\eps}(x_{k})}u^{\eps}(y)\leq u^{\eps}(x_{k+1}^{II}).
$$
Therefore, we arrive to
$$
\E^{(x_{0},1)}_{S_{I}^{*},S_{II}}[M_{k+1}\arrowvert (x_{k},1)]\geq(1-\eps^{2})\Big\{\half (\sup_{y\in B_{\eps}(x_{k})}u^{\eps}(y) - \frac{\delta}{2^{k}})+\half \inf_{y\in B_{\eps}(x_{k})}u^{\eps}(y)\Big\}+\eps^{2}v^{\eps}(x_{k})-\frac{\delta}{2^{k+1}},
$$
that is,
$$
\begin{array}{l}
\displaystyle 
\E^{(x_{0},1)}_{S_{I}^{*},S_{II}}[M_{k+1}\arrowvert (x_{k},1)] 
 \displaystyle \geq(1-\eps^{2})\Big\{\half \sup_{y\in B_{\eps}(x_{k})}u^{\eps}(y)+\half \inf_{y\in B_{\eps}(x_{k})}u^{\eps}(y)\Big\}+\eps^{2}v^{\eps}(x_{k})-(1-\eps^{2}) \frac{\delta}{2^{k+1}}-\frac{\delta}{2^{k+1}}.
\end{array}
$$
As $u^{\eps}$ is a solution to the DPP \eqref{DPP} we obtain 
$$
\E^{(x_{0},1)}_{S_{I}^{*},S_{II}}[M_{k+1}\arrowvert (x_{k},1)]\geq u^{\eps}(x_{k})-\frac{\delta}{2^{k}}=M_{k}
$$
as we wanted to show.

\textbf{Case 2:} Assume that $j_{k}=2$. With the same ideas used before we get
$$
\begin{array}{l}
\displaystyle 
\E^{(x_{0},1)}_{S_{I}^{*},S_{II}}[M_{k+1}\arrowvert (x_{k},2)] 
\displaystyle =(1-\eps^{2})\E^{(x_{0},1)}_{S_{I}^{*},S_{II}}[M_{k+1}\arrowvert (x_{k},2)\wedge j_{k+1}=2]+\eps^{2}\E^{(x_{0},1)}_{S_{I}^{*},S_{II}}[M_{k+1}\arrowvert (x_{k},2)\wedge j_{k+1}=1].
\end{array}
$$
Remark that when $j_{k}=j_{k+1}=2$ (this means that we play in the second board) with $x_{k}\in\Om$, then $x_{k+1}$ is chosen with
uniform probability in the ball $B_{\eps}(x_{k})$. Hence,
$$
\begin{array}{l}
\displaystyle \E^{(x_{0},1)}_{S_{I}^{*},S_{II}}[M_{k+1}\arrowvert (x_{k},2)\wedge j_{k+1}=2]\ \displaystyle =\E^{(x_{0},1)}_{S_{I}^{*},S_{II}}[v^{\eps}(x_{k+1})-\frac{\delta}{2^{k+1}}\arrowvert (x_{k},2)\wedge j_{k+1}=2]
\displaystyle =\kint_{B_{\eps}(x_{k})}v^{\eps}(y)dy-\frac{\delta}{2^{k+1}}.
\end{array}
$$
On the other hand, 
$$
\E^{(x_{0},1)}_{S_{I}^{*},S_{II}}[M_{k+1}\arrowvert (x_{k},2)\wedge j_{k+1}=1]=u^{\eps}(x_{k})-\frac{\delta}{2^{k+1}}.
$$
Collecting these estimates we obtain
$$
\begin{array}{l}
\displaystyle 
\E^{(x_{0},1)}_{S_{I}^{*},S_{II}}[M_{k+1}\arrowvert (x_{k},2)]=(1-\eps^{2})\left(\kint_{B_{\eps}(x_{k})}v^{\eps}(y)dy-\frac{\delta}{2^{k+1}}
\right)+\eps^{2}(u^{\eps}(x_{k})-\frac{\delta}{2^{k+1}}) \\[10pt]
\qquad \displaystyle \geq(1-\eps^{2})\kint_{B_{\eps}(x_{k})}v^{\eps}(y)dy+\eps^{2}u^{\eps}(x_{k})-\frac{\delta}{2^{k}},
\end{array}
$$
that is,
$$
\E^{(x_{0},1)}_{S_{I}^{*},S_{II}}[M_{k+1}\arrowvert (x_{k},2)]\geq v^{\eps}(x_{k})-\frac{\delta}{2^{k}}=M_{k}.
$$
Here we used that $v^{\eps}$ is a solution to the DPP, \eqref{DPP}. This ends the second case.

Therefore $(M_{k})_{k\geq 0}$ is a \textit{submartingale}. Using the \textit{OSTh} 
(recall that we have proved that $\tau$ is finite a.s. and that we have that $M_k$ is uniformly bounded) we conclude that 
$$
\E^{(x_{0},1)}_{S_{I}^{*},S_{II}}[M_{\tau}]\geq M_{0}
$$
where $\tau$ is the first time such that $x_{\tau}\notin\Om$ in any of the two boards. Then, 
$$
\E^{(x_{0},1)}_{S_{I}^{*},S_{II}}[\mbox{final payoff}]\geq u^\eps (x_{0})-\delta.
$$
We can compute the infimum  in $S_{II}$ and then
the supremum in $S_{I}$ to obtain
$$
\sup_{S_{I}}\inf_{S_{II}}\E^{(x_{0},1)}_{S_{I},S_{II}}[\mbox{final payoff}]\geq u^{\eps}(x_{0})-\delta.
$$

We just observe that if we have started in the second board the previous computations show that
$$
\sup_{S_{I}}\inf_{S_{II}}\E^{(x_{0},2)}_{S_{I},S_{II}}[\mbox{final payoff}]\geq v^{\eps}(x_{0})-\delta.
$$

Now our goal is to prove the reverse inequality (interchanging inf and sup). 
To this end we define an strategy for Player II with
$$
x_{k+1}^{II}=S_{II}^{*}(x_{0},...,x_{k})
\qquad
\mbox{is such that} 
\qquad
\inf_{B_{\eps}(x_{k})}u^{\eps}(x_{k+1}^{II})+\frac{\delta}{2^{k}}\geq u^{\eps}(x_{k+1}^{II}),
$$
and consider the sequence of random variables 
$$
N_{k}=\left\lbrace
 \begin{array}{ll}
 \displaystyle u^{\eps}(x_{k})+\frac{\delta}{2^{k}} & \ \ \mbox{if }  j_{k}=1  \\[10pt]
 \displaystyle v^{\eps}(x_{k})+\frac{\delta}{2^{k}} & \ \ \mbox{if }   j_{k}=2. 
 \end{array}
 \right.
$$
Arguing as before we obtain that this sequence is a \textit{supermartingale}.
From the \textit{OSTh} we get 
$$
\E^{(x_{0},1)}_{S_{I}^{*},S_{II}}[N_{\tau}]\leq N_{0}
$$
where $\tau$ is the stopping time for the game. Then, 
$$
\E^{(x_{0},1)}_{S_{I},S_{II}^{*}}[\mbox{final payoff}]\leq u^\eps (x_{0})+\delta.
$$
Taking supremum in $S_{I}$ and then infimum in $S_{II}$ we obtain
$$
\inf_{S_{II}}\sup_{S_{I}}\E^{(x_{0},1)}_{S_{I},S_{II}}[\mbox{final payoff}]\leq u^{\eps}(x_{0})+\delta.
$$
As before, the same ideas starting at $(x_{0},2)$ give us
$$
\inf_{S_{II}}\sup_{S_{I}}\E^{(x_{0},1)}_{S_{I},S_{II}}[\mbox{final payoff}]\leq v^{\eps}(x_{0})+\delta.
$$

To end the proof we just observe that 
$$
\sup_{S_{I}}\inf_{S_{II}}\E_{S_{I},S_{II}}[\mbox{final payoff}]\leq \inf_{S_{II}}\sup_{S_{I}}\E_{S_{I},S_{II}}[\mbox{final payoff}].
$$
Therefore,
$$
u^{\eps}(x_{0})-\delta\leq\sup_{S_{I}}\inf_{S_{II}}\E_{S_{I},S_{II}}^{(x_{0},1)}[\mbox{final payoff}]
\leq \inf_{S_{II}}\sup_{S_{I}}\E_{S_{I},S_{II}}^{(x_{0},1)}[\mbox{final payoff}]\leq u^{\eps}(x_{0})+\delta
$$
and
$$
v^{\eps}(x_{0})-\delta\leq\sup_{S_{I}}\inf_{S_{II}}\E_{S_{I},S_{II}}^{(x_{0},2)}[\mbox{final payoff}]\leq 
\inf_{S_{II}}\sup_{S_{I}}\E_{S_{I},S_{II}}^{(x_{0},2)}[\mbox{final payoff}]\leq v^{\eps}(x_{0})+\delta.
$$
As $\delta >0$ is arbitrary the proof is finished.
\end{proof}

\begin{remark} {\rm One can obtain existence for the DPP considering, 
$$
\displaystyle (\textbf{e*}) : \left\lbrace
\begin{array}{ll}
 \displaystyle z^{\eps}(x)\geq\eps^{2}w^{\eps}(x)+(1-\eps^{2})\Big\{\half \sup_{y \in B_{\eps}(x)}z^{\eps}(y) + \half \inf_{y \in B_{\eps}(x)}z^{\eps}(y)\Big\} \qquad &  \ x \in \Omega,  \\[10pt]
   \displaystyle w^{\eps}(x)\geq\eps^{2}z^{\eps}(x)+(1-\eps^{2})\kint_{B_{\eps}(x)}w^{\eps}(y)dy  \qquad & \ x \in \Omega,  \\[10pt]
z^{\eps}(x) \geq \ol{f}(x) \qquad & \ x \in \R^{n} \backslash \Omega,  \\[10pt]
w^{\eps}(x) \geq \ol{g}(x) \qquad &  \ x \in \R^{n} \backslash \Omega. 
\end{array}
\right.
$$
and the associated set of functions
\begin{equation}
{B} =\displaystyle \Big\{ (z^{\eps},w^{\eps}) / \mbox{ are bounded functions such that} \ (\textbf{e*}) \Big\}.
\end{equation}
Now, we compute infimums, 
 \begin{equation}
 \label{u2}
 u^{\eps,*}(x)=\inf_{(z^{\eps},w^{\eps})\in {B}}z^{\eps}(x) 
\qquad \mbox{ and } \qquad
   v^{\eps,*}(x)=\inf_{(z^{\eps},w^{\eps})\in {B}}w^{\eps}(x),
   \end{equation}
 that are solutions to the DPP (this fact can be proved as we did for supremums).
Then, by the uniqueness to solutions to the DPP we have 
 $$
 u^{\eps,*} = u^{\eps} \qquad \mbox{and} \qquad v^{\eps,*} = v^{\eps}.
 $$
}
\end{remark}

\section{Uniform Convergence} \label{sect-convergence}

Now our aim is to pass to the limit in the values of the game
$$
u^\eps \to u,  \ v^\eps \to v \qquad \mbox{as } \eps \to 0
$$
and then in the next section to obtain that this limit pair $(u,v)$ is a viscosity solution to our system \eqref{ED1}.

To obtain a convergent subsequence $u^\eps \to u$ we will use the following
Arzela-Ascoli type lemma. For its proof see Lemma~4.2 from \cite{MPRb}.

\begin{lemma}\label{lem.ascoli.arzela} Let $\{u^\eps : \overline{\Omega}
\to \R,\ \eps>0\}$ be a set of functions such that
\begin{enumerate}
\item there exists $C>0$ such that $\abs{u^\eps (x)}<C$ for
    every $\eps >0$ and every $x \in \overline{\Omega}$,
\item \label{cond:2} given $\delta >0$ there are constants
    $r_0$ and $\eps_0$ such that for every $\eps < \eps_0$
    and any $x, y \in \overline{\Omega}$ with $|x - y | < r_0 $
    it holds
$$
|u^\eps (x) - u^\eps (y)| < \delta.
$$
\end{enumerate}
Then, there exists  a uniformly continuous function $u:
\overline{\Omega} \to \R$ and a subsequence still denoted by
$\{u^\eps \}$ such that
\[
\begin{split}
u^{\eps}\to u \qquad\textrm{ uniformly in }\overline{\Omega},
\mbox{ as $\eps\to 0$.}
\end{split}
\]
\end{lemma}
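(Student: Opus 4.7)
The plan is to combine a diagonal extraction on a countable dense set with the asymptotic equicontinuity hypothesis to produce a pointwise limit, and then to upgrade pointwise convergence to uniform convergence using the compactness of $\overline{\Omega}$.

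First I would fix a countable dense set $D=\{x_k\}_{k\geq 1}\subset\overline{\Omega}$. By hypothesis (1), for each $k$ the sequence $\{u^\eps(x_k)\}_\eps$ is bounded in $\R$, so a standard Cantor diagonal argument yields a subsequence $\eps_n\to 0$ along which $u^{\eps_n}(x_k)$ converges for every $k$. Define $u(x_k):=\lim_n u^{\eps_n}(x_k)$ on $D$.

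Next I would show $u$ is uniformly continuous on $D$ and therefore extends uniquely to a uniformly continuous function $u:\overline{\Omega}\to\R$. Given $\delta>0$, take $r_0,\eps_0$ from hypothesis (2). For $x_j,x_k\in D$ with $|x_j-x_k|<r_0$ and for every $\eps_n<\eps_0$ one has $|u^{\eps_n}(x_j)-u^{\eps_n}(x_k)|<\delta$; letting $n\to\infty$ gives $|u(x_j)-u(x_k)|\leq\delta$, which is exactly uniform continuity on $D$.

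Finally I would promote pointwise convergence on $D$ to uniform convergence on $\overline{\Omega}$ via compactness. Given $\delta>0$, pick $r_0,\eps_0$ as in (2); by compactness of $\overline{\Omega}$, choose finitely many points $x_{k_1},\dots,x_{k_M}\in D$ so that the balls $B_{r_0/2}(x_{k_i})$ cover $\overline{\Omega}$. Take $n_0$ large enough that $\eps_n<\eps_0$ and $|u^{\eps_n}(x_{k_i})-u(x_{k_i})|<\delta$ for all $i=1,\dots,M$ and all $n\geq n_0$. For an arbitrary $x\in\overline{\Omega}$ pick $i$ with $|x-x_{k_i}|<r_0$; then a triangle inequality
\[
|u^{\eps_n}(x)-u(x)|\leq |u^{\eps_n}(x)-u^{\eps_n}(x_{k_i})|+|u^{\eps_n}(x_{k_i})-u(x_{k_i})|+|u(x_{k_i})-u(x)|
\]
bounds each of the three terms by $\delta$ (the first by hypothesis (2), the second by the choice of $n_0$, the third by the uniform continuity of $u$ on $\overline{\Omega}$, provided $r_0$ was shrunk if necessary), yielding $\|u^{\eps_n}-u\|_{L^\infty(\overline{\Omega})}\leq 3\delta$ for $n\geq n_0$.

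The only subtle point is the \emph{asymptotic} nature of the equicontinuity: the bound on $|u^\eps(x)-u^\eps(y)|$ is available only once $\eps<\eps_0$. This is handled automatically in the diagonal step (which we may restrict to $\eps_n<\eps_0$) and in the final covering argument (where we already require $n$ large). I expect this coupling between $\eps$ being small and $n$ being large to be the main technical care needed; everything else is the classical Arzelà--Ascoli scheme.
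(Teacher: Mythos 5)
Your argument is correct. The paper itself does not prove this lemma but simply cites Lemma~4.2 of \cite{MPRb}; the argument you give — diagonal extraction on a countable dense set, uniform continuity of the pointwise limit on that dense set followed by extension to $\overline\Omega$, and upgrade to uniform convergence via a finite cover and a three-term triangle inequality — is exactly the standard Arzel\`a--Ascoli scheme adapted to asymptotic equicontinuity, and it is the argument used in that reference. One small remark: the ``shrink $r_0$ if necessary'' caveat for the third term is not needed, since the same $r_0$ from hypothesis (2) already yields $|u(x)-u(y)|\le\delta$ whenever $|x-y|<r_0$ by your Step~2 passed to the limit and then to the closure by density.
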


So our task now is to show that $u^\eps$ and $v^\eps$ both satisfy the hypotheses of the previous lemma. First, we
observe that they are uniformly bounded.

\begin{lemma}\label{lem.ascoli.arzela.acot} 
There exists a constant $C>0$ independent of $\eps$ such that $$\abs{u^\eps (x)}\leq C, \qquad \abs{v^\eps (x)}\leq C,$$ for
    every $\eps >0$ and every $x \in \overline{\Omega}$.
\end{lemma}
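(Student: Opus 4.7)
The plan is to exploit the game-theoretic representation of $(u^\eps, v^\eps)$ established in the preceding theorem, combined with the observation that the token's exit point must lie close to $\partial\Omega$. Indeed, since at each step the token moves a distance at most $\eps$ (or stays put when the board changes), at the stopping time $\tau$ we have $\mathrm{dist}(x_\tau, \overline{\Omega}) \le \eps$. Restricting attention to $\eps \le \eps_0$ for some fixed $\eps_0 > 0$ (which is the only range of interest, since the goal is to eventually send $\eps \to 0$), the exit point $x_\tau$ always belongs to the bounded set
$$
K := \bigl\{ x \in \R^N : \mathrm{dist}(x, \overline{\Omega}) \le \eps_0 \bigr\}.
$$

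Because $\ol{f}$ and $\ol{g}$ are Lipschitz extensions of the Lipschitz functions $f, g$ defined on the compact set $\partial\Omega$, both $\ol{f}$ and $\ol{g}$ are bounded on $K$. Setting
$$
C := \max\bigl\{ \|\ol{f}\|_{L^\infty(K)},\ \|\ol{g}\|_{L^\infty(K)} \bigr\},
$$
which is finite and independent of $\eps$, and writing $h(x_\tau)$ for the final payoff ($\ol{f}$ or $\ol{g}$ depending on the board at time $\tau$), we have $|h(x_\tau)| \le C$ almost surely under any pair of strategies. Then, using Proposition \ref{prop.juego.termina} (which guarantees $\tau < \infty$ a.s.\ so that the expectation is well defined) and the game representation, for any $x_0 \in \Omega$,
$$
|u^\eps(x_0)| = \left| \sup_{S_I}\inf_{S_{II}} \E^{(x_0,1)}_{S_I, S_{II}}[h(x_\tau)] \right| \le \sup_{S_I, S_{II}} \E^{(x_0,1)}_{S_I, S_{II}}\bigl[|h(x_\tau)|\bigr] \le C,
$$
and the analogous estimate with initial condition $(x_0, 2)$ gives $|v^\eps(x_0)| \le C$. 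For $x \in \partial\Omega$ the exterior conditions give $u^\eps(x) = f(x)$ and $v^\eps(x) = g(x)$, both bounded by $C$, so the bound extends to all of $\overline{\Omega}$.

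I do not expect any serious obstacle: the argument is essentially immediate once the game-theoretic representation is in hand and one notices that the unboundedness of the Lipschitz extensions $\ol{f}, \ol{g}$ on all of $\R^N \setminus \Omega$ is harmless, since only their values in the $\eps$-tubular neighborhood of $\partial\Omega$ can ever appear as a payoff. Alternatively, one could argue purely by comparison within the DPP using the constant pair $(C, C)$, but doing so cleanly still requires controlling $\ol{f}$ and $\ol{g}$ on the relevant strip, so the game approach is the most direct.
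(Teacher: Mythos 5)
Your argument is correct and is essentially the same as the paper's own proof, which also rests on the observation that the final payoff $h(x_\tau)$ is bounded by $C=\max\{\|\ol f\|_\infty,\|\ol g\|_\infty\}$ (equivalently, via the Perron construction, every element of the set ${A}$ is bounded above by $C$ and $(-C,-C)\in {A}$). One small but genuine improvement on your part: the paper writes $\|\ol f\|_\infty,\|\ol g\|_\infty$ as if the Lipschitz extensions were globally bounded, which need not hold on all of $\R^N\setminus\Omega$; your observation that the exit point always satisfies $\dist(x_\tau,\ol\Omega)<\eps$, so that for $\eps\le\eps_0$ only the values of $\ol f,\ol g$ on the compact tube $K=\{\dist(\cdot,\ol\Omega)\le\eps_0\}$ ever enter the payoff, tightens this and makes the constant manifestly finite and independent of $\eps\in(0,\eps_0]$, which is the only range relevant to the Arzel\`a--Ascoli argument.
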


\begin{proof} 
It follows form our proof of existence of a solution to the DPP. In fact, we can take
$$
C = \max \{ \| g\|_\infty, \| f \|_\infty \},
$$
since the final payoff in any of the boards is bounded by this $C$.
\end{proof}

To prove the second hypothesis of Lemma \ref{lem.ascoli.arzela} we will need 
some key estimates according to the board in which we are playing.

\subsection{Estimates for the Tug-of-War game}

In this case we are going to assume that we are playing in board 1 (with the Tug-of-War game) all the time (without changing boards).
\begin{lemma} \label{lema.estim.ToW}
Given $\eta>0$ and $a>0$, there exist $r_{0}>0$ and $\eps_{0}>0$ such that, given $y\in\partial\Om$ and $x_{0}\in\Om$ with $| x_{0}-y |<r_{0}$, 
any of the two players has a strategy $S^{*}$ with which we obtain 
$$\mathbb{P}\Big(x_{\tau} : | x_{\tau}-y | < a \Big) \geq 1 - \eta \qquad \mbox{and} \qquad
\mathbb{P}\Big(\tau \geq \frac{a}{\eps^2}\Big)< \eta$$
  for $\eps<\eps_{0}$ and $x_{\tau}\in\R^{N}\backslash\Om$ the first position outside $\Om$.
\end{lemma}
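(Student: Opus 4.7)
The plan is to exploit the uniform exterior ball at $y$ to construct two supermartingales that simultaneously control the exit time and the exit location. Let $z_y \in \R^N \setminus \ol{\Omega}$ be the center of the exterior ball at $y$, so $\ol{B_\theta(z_y)} \cap \ol{\Omega} = \{y\}$, and write $h(x) = |x - z_y|$ and $m(x) = h(x) - \theta$; then $m \geq 0$ on $\ol{\Omega}$ with $m = 0$ only at $y$. The strategy $S^*$ is the same for either Player $\I$ or Player $\II$: at each turn that player wins the coin toss, move the token to the point of $\ol{B_\eps(x_k)}$ closest to $z_y$. Under $S^*$, $h_{k+1} = h_k - \eps$ when the $S^*$-user wins, while $h_{k+1} \in [h_k - \eps, h_k + \eps]$ in any other case since $|x_{k+1} - x_k| \leq \eps$.

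Pick $R$ large enough that $m_k + \eps \leq R$ on the reachable region and consider the concave function $\phi(m) = 2Rm - m^2$. The algebraic identity
\[
\half\phi(m - \eps) + \half\phi(m + \eps) = \phi(m) - \eps^2
\]
holds for every real $m$, and combined with the fact that $\phi$ is increasing on $[-\eps,R]$ together with the bound $m_{k+1}^{\mathrm{opp}} \leq m_k + \eps$ for any opponent choice, yields $\E[\phi(m_{k+1}) \mid \mathcal{F}_k] \leq \phi(m_k) - \eps^2$. Hence $\phi(m_k) + k\eps^2$ is a supermartingale on the stopped process $k \wedge \tau$. Applying the OSTh at $\tau \wedge T$, using the uniform lower bound $\phi \geq -2R\eps - \eps^2$ on $m \in [-\eps, R]$, and sending $T \to \infty$ via monotone convergence, one obtains $\E[\tau]\eps^2 \leq 2R(m_0 + \eps) + \eps^2$. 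The hypothesis $|x_0 - y| < r_0$ gives $m_0 \leq r_0$ by the triangle inequality, so Markov's inequality yields $\P(\tau \geq a/\eps^2) \leq [2R(r_0 + \eps) + \eps^2]/a$. A parallel computation shows $m_k$ itself is a supermartingale ($\E[m_{k+1}\mid\mathcal{F}_k] \leq \half(m_k - \eps) + \half(m_k + \eps) = m_k$), so OSTh together with the bound $m_\tau \geq -\eps$ give $\P(m_\tau \geq \rho) \leq (m_0 + \eps)/\rho$ for every $\rho > 0$.

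To convert ``$m_\tau$ small'' into ``$|x_\tau - y|$ small'', I would use a compactness argument: since $\ol{B_\theta(z_y)} \cap \ol{\Omega} = \{y\}$, for every $a > 0$ there exists $\rho = \rho(a) > 0$ (uniform in $y \in \partial\Omega$, thanks to the uniformity of the exterior ball condition) such that $\tilde x \in \ol{\Omega}$ with $h(\tilde x) \leq \theta + 2\rho$ forces $|\tilde x - y| \leq a/2$; otherwise, extracting a convergent subsequence would contradict $\ol{B_\theta(z_{y_*})} \cap \ol{\Omega} = \{y_*\}$ at the limit. Since $x_\tau$ lies within $\eps$ of $\ol{\Omega}$, choosing $\eps_0 \leq \min(\rho, a/2)$ ensures $\{m_\tau \leq \rho\} \subset \{|x_\tau - y| \leq a\}$. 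Selecting $r_0, \eps_0$ small enough so that $(r_0 + \eps_0)/\rho < \eta$ and $[2R(r_0 + \eps_0) + \eps_0^2]/a < \eta$ produces both bounds uniformly in $y$, and by symmetry the identical analysis applies whether $S^*$ is played by Player $\I$ or Player $\II$. The main obstacle I anticipate is the regime $m_k < \eps$, where the $S^*$-move may eject the token from $\Omega$ in one step and $\phi$ is evaluated at a negative value; this is handled because the identity for $\phi$ is purely algebraic (valid for all real $m$), the process is stopped at $\tau$, and the uniform lower bound on $\phi$ on the reachable range allows monotone convergence to pass to the limit in OSTh cleanly without needing an a priori bound on $\tau$.
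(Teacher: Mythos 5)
Your proposal takes a genuinely different route from the paper. The paper's strategy aims directly at the boundary point $y$ (WLOG $y=0$) and tracks $N_k = |x_k| + \eps^3/2^k$, proving that $N_k$ and $N_k^2 + k\eps^2/3$ are supermartingales; it makes no use of the exterior ball condition in this Tug-of-War lemma, and bounds $\E[|x_\tau-y|^2]$ and $\E[\tau]$ directly. Your strategy aims at the exterior-ball center $z_y\notin\ol\Omega$ and tracks $m_k = |x_k-z_y|-\theta$ with the concave Lyapunov function $\phi(m)=2Rm-m^2$; this buys you a cleaner supermartingale argument, since the pulling direction always points out of $\ol\Omega$ and you avoid the ``overshoot'' regime $|x_k-y|<\eps$ that the paper's radial strategy runs into (and handles only implicitly). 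The exit-time estimate via $\phi(m_k)+k\eps^2$, the OSTh on $\tau\wedge T$, and Markov are all in order, and the supermartingale $m_k$ gives $\P(m_\tau\geq\rho)$ small. Up to this point the two proofs are essentially of the same strength, just organized differently.

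The price you pay is the geometric lemma ``$m_\tau\leq 2\rho$ forces $|x_\tau-y|\leq a/2$, uniformly in $y\in\partial\Omega$,'' which the paper does not need here because $N_\tau$ bounds $|x_\tau-y|$ directly. Your compactness sketch for that lemma has a real gap. If $y_n\to y_*$, $z_{y_n}\to z_*$, $x_n\to x_*$ with $m_{y_n}(x_n)\to 0$ and $|x_n-y_n|\geq a/2$, then indeed $x_*,y_*\in\partial B_\theta(z_*)\cap\ol\Omega$ and $B_\theta(z_*)\cap\ol\Omega=\emptyset$, but the limit center $z_*$ need not be (and in general will not be) the exterior-ball center $z_{y_*}$ for $y_*$; the centers $z_y$ need not depend continuously on $y$. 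The hypothesis $\ol{B_\theta(z_{y_*})}\cap\ol\Omega=\{y_*\}$ says nothing about the different ball $\ol{B_\theta(z_*)}$, which a priori could touch $\ol\Omega$ at $x_*\neq y_*$. So the sentence ``extracting a convergent subsequence would contradict $\ol{B_\theta(z_{y_*})}\cap\ol\Omega=\{y_*\}$'' does not close the argument; you would need a separate geometric fact (e.g. that under a uniform exterior ball condition no sphere of radius $\theta$ with $\dist(z,\ol\Omega)=\theta$ can touch $\ol\Omega$ at two points) or a different route. A more robust fix is to abandon $z_y$ and simply aim at $y$ as the paper does, so that the quantity you control is $|x_\tau-y|$ itself and no geometric lemma is needed.

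One further technicality: your strategy moves to ``the point of $\ol{B_\eps(x_k)}$ closest to $z_y$,'' a move of length exactly $\eps$, but the game only allows moves inside the open ball $B_\eps(x_k)$. This is easily repaired, as the paper does, by moving distance $\eps-\eps^3/2^k$ and absorbing the summable error $\sum_k\eps^3/2^k$ into the supermartingale; but it should be said.
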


This Lemma says that if we start playing close enough to $y\in\partial\Om$ we will finish quickly  (in a number of 
steps less than a small constant times $\eps^2$) and at a final position close to $y\in\partial\Om$ with high probability.

\begin{proof}
We can assume without loss of generality that $y=0\in\partial\Om$. In this case we will define the strategy $S^{*}$ (this strategy can be used
by any of the two players) \lq\lq point to the point $y=0$'' as follows
$$x_{k+1}=S^{*}(x_{0},x_{1},...,x_{k})=x_{k}+(\frac{\eps^{3}}{2^{k}}-\eps)\frac{x_{k}}{ |x_{k} |}$$
Now let us consider the random variables
$$ N_{k}= | x_{k}|+\frac{\eps^{3}}{2^{k}} $$
for $k\geq 0$ and play assuming that one of the players uses the $S^{*}$ strategy. The goal is to prove that $\{N_{k}\}_{k\geq 0}$ 
is \textit{supermartingale}, i.e.,
$$ \E[N_{k+1}\arrowvert N_{k}]\leq N_{k}. $$
Note that with probability 1/2 we obtain
$$x_{k+1}=x_{k} + (\frac{\eps^{3}}{2^{k}}-\eps)\frac{x_{k}}{ | x_{k} |}$$
this is the case when the player who uses the $S^{*}$ strategy wins the coin toss. On the other hand, we have 
$$| x_{k+1}|\leq | x_{k}| + \eps, $$ 
when the other player wins.
Then, we obtain
$$ \E\Big[| x_{k+1}| \arrowvert  x_{k} \Big]\leq \half \Big(| x_{k}| +(\frac{\eps^{3}}{2^{k}}-\eps)\Big) + \half(| x_{k} | + \eps)= | x_{k} |
+\frac{\eps^{3}}{2^{k+1}}.$$
Hence, we get
$$ \E \Big[N_{k+1}\arrowvert N_{k}\Big]=\E \Big[ | x_{k+1}| + \frac{\eps^{3}}{2^{k+1}}\arrowvert | x_{k} | + \frac{\eps^{3}}{2^{k}}\Big]
\leq |x_{k}| + \frac{\eps^{3}}{2^{k+1}}+\frac{\eps^{3}}{2^{k+1}}=N_{k}.$$
We just proved that $\{N_{k}\}_{k\geq 0}$ is a \textit{supermartingale}.
Now, let us consider the random variables
$$ (N_{k+1}-N_{k})^{2},$$
and the event
\begin{equation}
\label{Fk}
F_{k}=\{  the \ player \ who \ points \ to \ 0\in\partial\Om \ wins \ the \ coin \ toss  \}.
\end{equation}
Then we have the following
$$
\begin{array}{l}
\displaystyle \E[(N_{k+1}-N_{k})^{2}\arrowvert N_{k}]
 \displaystyle =\half\E[(N_{k+1}-N_{k})^{2}\arrowvert N_{k} \wedge F_{k}]+\half\E[(N_{k+1}-N_{k})^{2}\arrowvert N_{k} \wedge F_{k}^{c}]
\\[10pt]
\qquad \displaystyle \geq 
 \displaystyle  \half\E[(N_{k+1}-N_{k})^{2}\arrowvert N_{k} \wedge F_{k}].
\end{array}
$$
Let us observe that
$$
\begin{array}{l}
\displaystyle
\half\E[(N_{k+1}-N_{k})^{2}\arrowvert N_{k} \wedge F_{k}]
\displaystyle =\half\E \Big[( |x_{k}| -\eps+\frac{\eps^{3}}{2^{k}}+\frac{\eps^{3}}{2^{k+1}}- | x_{k}| -\frac{\eps^{3}}{2^{k}})^{2} \Big]
 \displaystyle =\half\E \Big[(-\eps+\frac{\eps^{3}}{2^{k+1}})^{2} \Big]\geq\frac{\eps^{2}}{3}
\end{array}
$$
if $\eps<\eps_{0}$ for $\eps_{0}$ small enough. With this estimate in mind we obtain
\begin{equation}
\label{e/3b}
\E[(N_{k+1}-N_{k})^{2}\arrowvert N_{k}] \geq \frac{\eps^{2}}{3}.
\end{equation}
Now we will analyze $N_{k}^{2}-N_{k+1}^{2}$. We have
\begin{equation}
\label{M2b}
N_{k}^{2}-N_{k+1}^{2}=(N_{k+1}-N_{k})^{2}+2N_{k+1}(N_{k}-N_{k+1}).
\end{equation}
Let us prove that $\E[N_{k+1}(N_{k}-N_{k+1})\arrowvert N_{k}]\geq 0$ using the set $F_{k}$ defined by \eqref{Fk}. It holds that
$$
\begin{array}{l}
\displaystyle
\E[N_{k+1}(N_{k}-N_{k+1})\arrowvert N_{k}]
\\[10pt]
\qquad \displaystyle =\half\E[N_{k+1}(N_{k}-N_{k+1})\arrowvert N_{k}\wedge F_{k}]+\half\E[N_{k+1}(N_{k}-N_{k+1})\arrowvert N_{k}\wedge F_{k}^{c}]
\\[10pt]
\qquad \displaystyle =
\half \Big[(|x_{k}|-\eps +\frac{\eps^{3}}{2^{k}}+\frac{\eps^{3}}{2^{k+1}})(|x_{k}|+\frac{\eps^{3}}{2^{k}}-|x_{k}|+\eps-\frac{\eps^{3}}{2^{k}}-\frac{\eps^{3}}{2^{k+1}})\Big] \\[10pt]
\qquad \displaystyle \qquad +
\half \Big[(|x_{k+1}|+\frac{\eps^{3}}{2^{k+1}})(|x_{k}|+\frac{\eps^{3}}{2^{k}}-|x_{k+1}|-\frac{\eps^{3}}{2^{k+1}})\Big]
\\[10pt]
\qquad \displaystyle 
\geq
\half \Big(|x_{k}|-\eps+\frac{\eps^{3}}{2^{k}}+\frac{\eps^{3}}{2^{k+1}}\Big)\Big(\eps-\frac{\eps^{3}}{2^{k+1}}\Big)
\\[10pt]
\qquad \displaystyle  \qquad +\half \Big[(|x_{k}|-\eps+\frac{\eps^{3}}{2^{k+1}})(|x_{k}|+\frac{\eps^{3}}{2^{k}}-
|x_{k}|-\eps-\frac{\eps^{3}}{2^{k+1}})\Big]
\end{array} 
$$
here we used that  $|x_{k}|-\eps\leq|x_{k+1}|\leq|x_{k}|+\eps$. Thus
$$\E[N_{k+1}(N_{k}-N_{k+1})\arrowvert N_{k}]\geq\half(|x_{k}|-\eps+\frac{\eps^{3}}{2^{k+1}}+\frac{\eps^{3}}{2^{k}})(\eps-\frac{\eps^{3}}{2^{k+1}})+\half(|x_{k}|-\eps+\frac{\eps^{3}}{2^{k+1}})(-\eps+\frac{\eps^{3}}{2^{k+1}}),$$
and then
$$\E[N_{k+1}(N_{k}-N_{k+1})\arrowvert N_{k}]\geq\half \Big[\frac{\eps^{3}}{2^{k}}(\eps-\frac{\eps^{3}}{2^{k+1}})\Big]\geq 0.$$
If we go back to \eqref{M2b} and use \eqref{e/3b} and the result we have just obtained we arrive to
$$\E[N_{k}^{2}-N_{k+1}^{2}\arrowvert N_{k}]\geq\E[(N_{k+1}-N_{k})^{2}\arrowvert N_{k}]\geq \frac{\eps^{2}}{3}.$$
Therefore, for the sequence of random variables
$$\W_{k}=N^{2}_{k}+\frac{k\eps^{2}}{3}$$
we have
$$\E[\W_{k}-\W_{k+1}\arrowvert \W_{k}]=\E[N_{k}^{2}-N_{k+1}^{2}-\frac{\eps^{2}}{3}\arrowvert \W_{k}]\geq 0.$$
As  $\E[\W_{k}\arrowvert\W_{k}]=\W_{k}$ then
$$\E[\W_{k+1}\arrowvert \W_{k}]\leq\W_{k},$$
that is, the sequence $\{\W_{k}\}_{k\geq 1}$ is a \textit{supermartingale}. In order to use the \textit{OSTh},
given a fixed integer $m\in\N$ we define the stopping time
$$ \tau_{m}=\tau \wedge m := \min\{\tau,m\}$$
Now this new stopping time verifies
$ \tau_{m}\leq m $
which is the first hypothesis of the \textit{OSTh}. Then, using the \textit{OSTh} we obtain
$$ \E[\W_{\tau_{m}}]\leq \W_{0}. $$
Observe that $\lim\limits_{m\rightarrow\infty}\tau\wedge m = \tau $ almost surely. Then, using \textit{Fatou's Lemma}, we arrive to
$$ \E[\W_{\tau}]=\E[\liminf_{m} \W_{\tau\wedge m}]\underbrace{\leq}_{Fatou} \liminf_{m} \E[\W_{\tau\wedge m}]\underbrace{\leq}_{OSTh} \W_{0}$$
Thus, we obtain 
$ \E[\W_{\tau}]\leq \W_{0}$,
i.e.,
\begin{equation}
\label{OST2}
\E[N^{2}_{\tau}+\frac{\tau\eps^{2}}{3}]\leq N_{0}^{2}.
\end{equation}
Then,
$$\E[\tau]\leq 3(|x_{0}|+\eps^{3})^{2}\eps^{-2}\leq 4|x_{0} |^{2}\eps^{-2}$$
if $\eps$ is small enough. 
On the other hand, if we go back to \eqref{OST2} we have
$$\E[N_{\tau}^{2}]\leq N_{0}^{2},$$
i.e.
$$\E[|x_{\tau}|^{2}]\leq\E[(|x_{\tau}|+\frac{\eps^{3}}{2^{\tau}})^{2}]\leq (|x_{0}|+\eps^{3})^{2}\leq 2 |x_{0}|^{2}.$$
What we have so far is that
\begin{equation}
\label{etau}
\E[\tau]\leq 4|x_{0}|^{2}\eps^{-2}
\qquad \mbox{and} \qquad \E[|x_{\tau}|^{2}]\leq 2 |x_{0}|^{2}.
\end{equation}
We will use these two estimates to prove 
$$ \P \Big(\tau \geq \frac{a}{\eps^2}\Big)< \eta 
\qquad \mbox{and} \qquad \P\Big(|x_{\tau} |\geq a\Big)< \eta. $$
Given $\eta > 0$ and $a > 0$, we take $x_{0}\in\Om$ such that $|x_{0} |< r_{0}$ with $r_{0}$ that will be choosed later
(depending on $\eta$ and $a$). We have
$$ C r_{0}^{2}\eps^{-2}\geq C |x_{0}-y |^{2}\eps^{-2} \geq \E^{x_{0}}[\tau]\geq \P \Big(\tau \geq \frac{a}{\eps^{2}}\Big)\frac{a}{\eps^{2}}.$$
Thus
$$ \P(\tau \geq \frac{a}{\eps^{2}})\leq C \frac{r_{0}^{2}}{a}< \eta$$
which holds true if $r_{0}<\sqrt{\frac{\eta a}{C}}$.

Also we have
$$ C r_{0}^{2} \geq C |x_{0} |^{2}\geq \E^{x_{0}}[|x_{\tau} |^{2}]\geq a^{2}\P(|x_{\tau} |^{2}\geq a^{2}).$$
Then 
$$ \P(|x_{\tau} |\geq a)\leq C\frac{r_{0}^{2}}{a^{2}}< \eta$$
which holds true if $r_{0}< \sqrt{\frac{\eta a^{2}}{C}}$. Observe that if we take $a<1$ we have $\sqrt{\frac{\eta a^{2}}{C}}<\sqrt{\frac{\eta a}{C}}$, then if we choose $r_{0}<\sqrt{\frac{\eta a^{2}}{C}}$ both conditions are fulfilled at the same time.
\end{proof}

\subsection{Estimates for the Random Walk game}

In this case we are going to assume that we are permanently playing on board 2, with the random walk game.
The estimates for this game follow the same ideas as before, and are even simpler since there are no strategies 
of the players involved in this case. We include the details for completeness. 
\begin{lemma}
Given $\eta>0$ and $a>0$, there exists $r_{0}>0$ and $\eps_{0}>0$ such that, given $y\in\partial\Om$ and $x_{0}\in\Om$ with $|x_{0}-y|<r_{0}$, if we play random we obtain 
$$\mathbb{P} \Big(|x_{\tau}-y|< a \Big) \geq 1 - \eta
\qquad \mbox{and} \qquad\mathbb{P} \Big(\tau \geq \frac{a}{\eps^2} \Big)< \eta$$
  for $\eps<\eps_{0}$ and $x_{\tau}\in\R^{N}\backslash\Om$ the first position outside $\Om$.
\end{lemma}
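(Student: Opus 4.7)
The plan is to mirror the argument of Lemma \ref{lema.estim.ToW}: build an appropriate (super)martingale, apply the OSTh to bound $\E[\tau]$ and $\E[|x_\tau - y|^2]$ by small quantities when $x_0$ is close to $y$, and conclude with Markov's inequality. The structural difference is that now there is no strategy to exploit, so the drift required for the supermartingale will be supplied by a classical barrier built from the uniform exterior ball condition.

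First I would reduce to $y=0\in\partial\Om$ by translation and denote by $z_y\in\R^N\setminus\Om$ the center of the closed exterior ball of radius $\theta$ touching $\bar\Om$ only at the origin, so that $|x-z_y|\geq \theta$ for every $x\in\bar\Om$, with equality only at $x=0$. Then, for any fixed $\alpha>N-2$, I would consider the barrier
$$
w(x)=\theta^{-\alpha}-|x-z_y|^{-\alpha},
$$
which satisfies $w(0)=0$, $w\geq 0$ on $\bar\Om$, the Lipschitz bound $w(x)\leq L|x|$ near the origin (by the smoothness of $r\mapsto r^{-\alpha}$ together with the reverse triangle inequality), and
$$
\Delta w(x)=-\alpha(\alpha-N+2)\,|x-z_y|^{-\alpha-2}\leq -\mu<0\qquad \text{on } \bar\Om,
$$
since $|x-z_y|$ is bounded above by $\mathrm{diam}(\Om)+\theta$ there.

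Next, using the explicit averages $\kint_{B_\eps(0)} z_i z_j\,dz=\kappa\eps^2\delta_{ij}$ (with $\kappa$ the very constant appearing in Theorem \ref{teo.converge2}), Taylor expansion gives
$$
\E[w(x_{k+1})-w(x_k)\,|\,x_k]=\tfrac{\kappa\eps^2}{2}\Delta w(x_k)+O(\eps^4)\leq -c_0\eps^2
$$
for $\eps<\eps_0$ small enough, so that $\W_k:=w(x_k)+c_0 k\eps^2$ is a supermartingale. I would then apply the OSTh to $\tau\wedge m$ and let $m\to\infty$ via Fatou exactly as in Lemma \ref{lema.estim.ToW}, using the side estimate $w(x_\tau)\geq -C_1\eps$ (which holds because $x_{\tau-1}\in\bar\Om$ forces $|x_{\tau-1}-z_y|\geq \theta$, hence $|x_\tau-z_y|\geq \theta-\eps$), to obtain
$$
-C_1\eps+c_0\eps^2\E[\tau]\leq w(x_0)\leq L|x_0|,\qquad\text{i.e.,}\qquad \eps^2\E[\tau]\leq C_2(|x_0|+\eps).
$$
In parallel, since $\E[x_{k+1}-x_k\,|\,x_k]=0$ and $\E[|x_{k+1}-x_k|^2\,|\,x_k]=N\kappa\eps^2$, the process $|x_k|^2-N\kappa k\eps^2$ is a genuine martingale, and a further application of the OSTh yields
$$
\E[|x_\tau|^2]=|x_0|^2+N\kappa\eps^2\E[\tau]\leq |x_0|^2+C_3(|x_0|+\eps).
$$

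The conclusion then follows at once from Chebyshev--Markov:
$$
\P\!\Big(\tau\geq \tfrac{a}{\eps^2}\Big)\leq \frac{C_2(|x_0|+\eps)}{a}, \qquad \P\big(|x_\tau|\geq a\big)\leq \frac{|x_0|^2+C_3(|x_0|+\eps)}{a^2},
$$
and both are $<\eta$ as soon as $r_0$ and $\eps_0$ are chosen sufficiently small in terms of $\eta$ and $a$. The main (minor) technical nuisance will be the boundary correction $w(x_\tau)\geq -C_1\eps$, required because $x_\tau$ could fall inside $B_\theta(z_y)$ where $w<0$; this forces $\eps_0$ to be chosen jointly with $r_0$, but does not otherwise obstruct the argument.
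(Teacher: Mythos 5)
Your argument is correct but follows a genuinely different route than the paper, and it is arguably cleaner. The paper uses the exactly harmonic barrier $\mu(x)=\theta^{-(N-2)}-|x-z_y|^{-(N-2)}$, so that $\mu(x_k)$ is a martingale with $\E[\mu(x_\tau)]=\mu(x_0)$; to extract the two probability bounds it then needs two additional devices that you bypass. It obtains the drift for the stopping-time estimate not from $\mu$ (whose averaged increment is zero) but from $\mu^2$, via $\Delta(\mu^2)=2|\nabla\mu|^2>0$, and it converts the smallness of $\E[\mu(x_\tau)]$ into the smallness of $\P\big(|x_\tau-y|\geq a\big)$ by analyzing the inverse of the radial profile $b(\bar{a})=\theta^{-(N-2)}-\bar{a}^{-(N-2)}$ and choosing a threshold $b=b(a,\theta)$, plus a separate formula for $N=2$. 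You instead take $\alpha>N-2$ so that $w$ is strictly superharmonic on $\bar\Om$, which supplies the deterministic negative drift $-c_0\eps^2$ directly and gives $\eps^2\E[\tau]\leq C(|x_0|+\eps)$ in one pass; for the displacement you note that, centered at $y$, the process $|x_k|^2-N\kappa k\eps^2$ is a martingale, so $\E[|x_\tau|^2]=|x_0|^2+N\kappa\eps^2\E[\tau]$, and Markov finishes. Your version avoids both the $\mu^2$ computation and the inverse-function bookkeeping, treats $N=2$ with the same formula (any $\alpha>0$ works there), and cleanly separates the role of the exterior ball (producing the coercive drift) from the role of $y$ being the contact point (measured directly by $|x_k-y|^2$). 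Two small points worth making explicit when writing this up: apply the OSTh to $\tau\wedge m$ and pass to the limit by monotone and dominated convergence (the increments of both processes are bounded on $\bar\Om_\eps$, so this is routine, but it is not the bare statement of the theorem); and the bound on $\E[|x_\tau|^2]$ uses the finiteness of $\E[\tau]$ established just before, so the two estimates are sequential rather than ``in parallel.'' Neither affects the correctness.
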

\begin{proof}
Recall that we assumed that $ \Om $ satisfies the uniform exterior ball property for a certain $ \theta_ {0}> 0 $. 

For $ N \geq 3 $, given $ \theta <\theta_{0} $, and $ y \in \Om $ we are going to assume that $ z_{y} = 0 $ is chased so that we have 
$ \ol {B_{\theta}(0)}  \cap \ol {\Om} = \{ y \}$. We define the set
 $$\Om_{\eps}=\{x\in\R^{N}:d(x,\Om)<\eps\}$$
 for $ \eps $ small enough. Now, we consider the function $\mu:\Om_{\eps}\rightarrow\R$ given by 
 \begin{equation}
 \label{mu}
 \mu(x)=\frac{1}{\theta^{N-2}}-\frac{1}{|x |^{N-2}}.
 \end{equation}
This function is positive in $ \ol{\Om}\backslash \{ y \} $,
radially increasing and harmonic in $ \Om $. Also it holds that $ \mu (y) = 0 $.
For $N=2$ we take 
$ \mu(x)=\ln (\theta )-\ln (|x |)$
 and we leave the details to the reader.

We will take the first position of the game, $ x_{0} \in \Om $, such that $ |x_{0} -y |<r_{0} $ with $ r_{0} $ 
to be choosed later. Let $ (x_{k})_{k \geq 0} $ be the sequence of positions of the game playing random walks. 
Consider the sequence of random variables
$$ N_{k}=\mu(x_{k})$$
for $ k \geq 0 $. Let us prove that $ N_{k} $ is a \textit{martingale}. Indeed
 $$ \E [N_{k + 1} \arrowvert N_{k}] = \kint_{B _{\eps} (x_{k})} \mu(y) dy = \mu (x_{k}) = N_{k}. $$
 Here we have used that $ \mu $ is harmonic. Since $ \mu $ is bounded in $\Omega$, 
 the third hypothesis of {\it OSTh} is fulfilled, hence we obtain
 \begin{equation}
 \label{muxo}
 \E [\mu(x_{\tau})] = \mu(x_ {0}).
 \end{equation}
Let us estimate the value $\mu(x_{0})$
\begin{equation}
\label{acotmu}
\mu(x_{0})=\frac{1}{\theta^{n-2}}-\frac{1}{|x_{0} |^{n-2}}=\frac{|x_{0}|^{n-2}-\theta^{n-2}}{\theta^{n-2}|x_{0}|^{n-2}}=\frac{(|x_{0}|-\theta)}{\theta^{n-2}|x_{0}|^{n-2}}\Big(\sum\limits_{j=1}^{N-2}|x_{0}|^{N-2-j}\theta^{j-1}\Big).
\end{equation}
The first term can be bounded as
$$(|x_{0}|-\theta)=(|x_{0}|-|y |)\leq |x_{0}-y|<r_{0}.$$
To deal with the second term we will ask $\theta<1$ and $|x_{0}|^{l}\leq R^{N-2}$ where $R=\max_{x\in\Om}\{|x |\}$
(suppose $R>1$). Then, we obtain 
$$\sum\limits_{j=1}^{N-2}|x_{0}|^{n-2-j}\theta^{j-1}\leq R^{N-2}(N-2).$$
Finally, we will use that $|x_{0}|>\theta$. 
Plugging all these estimates in \eqref{acotmu} we obtain
$$\mu(x_{0})\leq r_{0}(\frac{R^{N-2}(N-2)}{\theta^{2(N-2)}}).$$
If we call $c(\Om,\theta)=\frac{R^{N-2}(N-2)}{\theta^{2(N-2)}}$ and come back to \eqref{muxo} we get
\begin{equation}
\label{espmu}
\E[\mu(x_{\tau})]<c(\Om,\theta)r_{0}.
\end{equation}
We need to establish a relation between $ \mu (x_{\tau}) $ and $ |x_{\tau} -y |$. To this end, we take the function $ b: [\theta, + \infty) \rightarrow \R $ given by
\begin{equation}
\label{funb}
b(\ol{a})=\frac{1}{\theta^{N-2}}-\frac{1}{\ol{a}^{N-2}}.
\end{equation}
Note that this function is the radial version of $ \mu $. It is positive and increasing,
then, it has an inverse (also increasing) that is given by the formula
$$ \ol{a}(b)=\frac{\theta}{(1-\theta^{N-2}b)^{\frac{1}{N-2}}}.$$
This function is positive, increasing and convex, since $ \ol{a}''> 0 $. Then for $ b <1 $ we obtain 
\begin{equation}
\label{bmay}
\ol{a}(b)\leq \theta + (\ol{a}(1)-\theta)b.
\end{equation}
Let us call $K(\theta)=(\ol{a}(1)-\theta)>0$ (this constant depends only on $\theta$). Using the relationship between $\ol{a}$ and $b$ we 
obtained the following: given $ \ol{a}> \theta $ there is $ b> 0 $ such that
$$ 
\mbox{if } \mu(x_{\tau})<b \mbox{ then } |x_{\tau} |<\ol{a}.
$$
Here we are using that the function $ b (\ol {a}) $ is increasing. Now one can check that,
for all $ a> 0 $ , there are $ \ol{a}> \theta $ and $ \eps_{0}> 0 $ such that, if
$$|x_{\tau}|< \ol{a} \qquad \mbox{and} \qquad d(x_{\tau},\Om)<\eps_{0},$$
then
$$ |x_{\tau}-y|<a.$$

Putting everything together we obtained that,
given $a>0$, exist $\ol{a}>\theta$, $b>0$ and $\eps_{0}>0$ such that  
$$
\mbox{if } \mu(x_{\tau})<b \Rightarrow |x_{\tau}-y|<a \ , \ d(x_{\tau},\Om)<\eps_{0}.
$$
We ask for
$
0<b<a
$
that we will used later. Then, we have
$$\P(\mu(x_{\tau})\geq b)\geq \P(|x_{\tau}-y|\geq a).$$
Coming back to \eqref{espmu} we get  
\begin{equation}
\label{desb}
c(\Om,\theta)r_{0}>\E[\mu(x_{\tau})]\geq \P(\mu(x_{\tau})\geq b)b\geq \P(|x_{\tau}-y|\geq a)b
\end{equation}
Using that $\ol{a}-\theta \leq K(\theta)b$ we obtain 
$$c(\Om,\theta)r_{0}>\P(|x_{\tau}-y|\geq a)\frac{\ol{a}-\theta}{K(\theta)}$$
Then
\begin{equation}
\label{desnorma}
\P(|x_{\tau}-y|\geq a)<\frac{c(\Om,\theta)r_{0}K(\theta)}{\ol{a}-\theta}<\eta
\end{equation}
which holds true if
$$r_{0}<\frac{\eta(\ol{a}-\theta)}{c(\Om,\theta)K(\theta)}.$$ 
This is one of the inequalities we wanted to prove.

Now let us compute
\begin{equation}
\label{ENK}
\E[N_{k+1}^{2}-N_{k}^{2}\arrowvert N_{k}]=\kint_{B_{\eps}(x_{k})}(\mu^{2}(w)-\mu^{2}(x_{k}))dw.
\end{equation}
Let us call $\varphi=\mu^{2}$. If we made the Taylor expansion of order two we obtain
$$ \varphi(w)=\varphi(x_{k})+\langle\nabla\varphi(x_{k}),(w-x_{k})\rangle+\half\langle D^{2}\varphi(x_{k})(w-x_{k}),(w-x_{k})\rangle+O(|w-x_{k}|^{3}).$$
Then
$$
\begin{array}{l}
\displaystyle 
\kint_{B_{\eps}(x_{k})}(\varphi(w)-\varphi(x_{k}))dw
 \displaystyle =\kint_{B_{\eps}(x_{k})}\langle\nabla\varphi(x_{k}),(w-x_{k})\rangle dw
\\[10pt]
\qquad \qquad \displaystyle +\half\kint_{B_{\eps}(x_{k})}\langle D^{2}\varphi(x_{k})(w-x_{k}),(w-x_{k})\rangle dw
 \displaystyle +\kint_{B_{\eps}(x_{k})}O(|w-x_{k}|^{3})dw.
\end{array} 
$$
Let us analyze these integrals 
$$\kint_{B_{\eps}(x_{k})}\langle\nabla\varphi(x_{k}),(w-x_{k})\rangle dw=0.$$
On the other hand, for $\langle D^{2}\varphi(x_{k})(w-x_{k}),(w-x_{k})\rangle$,
changing variables as
$w=x_{k}+\eps z$,
 it holds that 
$$\kint_{B_{\eps}(x_{k})}\langle D^{2}\varphi(x_{k})(w-x_{k}),(w-x_{k})\rangle dw
=\sum\limits_{j=1}^{N}\partial_{x_{j}x_{j}}^{2}\varphi(x_{k})\eps^{2}\kint_{B_{1}(0)}z_{j}^{2}dz=\kappa\eps^{2}\sum\limits_{j=1}^{N}\partial_{x_{j}x_{j}}^{2}\varphi(x_{k}).$$
Here we find the constant $\kappa $ that appears in the second equation in \eqref {ED1}. Let us
compute the second derivatives of $\varphi$. As $\varphi=\mu^{2}$,
 $$\sum\limits_{j=1}^{N}\partial_{x_{j}x_{j}}^{2}\varphi(x_{k})=2\sum\limits_{j=1}^{N}(\partial_{x_{j}}\mu(w))^{2}+2\mu(x_{k})\sum\limits_{j=1}^{n}\partial_{x_{j}x_{j}}^{2}\mu(x_{k}).$$
The second term is zero because $\mu$ is harmonic in $\Om$. Hence, we arrived to  
$$\sum\limits_{j=1}^{N}\partial_{x_{j}x_{j}}^{2}\varphi(x_{k})=2\sum\limits_{j=1}^{N}(\partial_{x_{j}}\mu(w))^{2}.$$
Using the definition of $\mu$ \eqref{mu} we get
$$\sum\limits_{j=1}^{N}\partial_{x_{j}x_{j}}^{2}\varphi(x_{k})=\frac{2(N-2)^{2}}{|x_{k} |^{2(N-2)}}.$$
Putting everything together 
$$
\begin{array}{l}
\displaystyle 
\kint_{B_{\eps}(x_{k})}(\varphi(w)-\varphi(x_{k}))dw
=\half\kappa\eps^{2}\frac{2(N-2)^{2}}{|x_{k} |^{2(N-2)}}+O(|w-x_{k}|^{3})
 \geq \eps^{2}\frac{\kappa(N-2)^{2}}{R^{2(n-2)}}-\gamma\eps^{3}\displaystyle \geq \eps^{2}\frac{\kappa(N-2)^{2}}{2R^{2(N-2)}},
\end{array}
$$
if $\eps$ is small enough (here $R=\max_{x\in\Om} \{ |x |\}$). Let us call 
$$\sigma(\Om)=\frac{\kappa(N-2)^{2}}{2R^{2(N-2)}}.$$ Then, if we go back to \eqref{ENK} we get
$$ \E[N_{k+1}^{2}-N_{k}^{2}\arrowvert N_{k}]\geq \sigma(\Om)\eps^{2}.$$
Let us consider the sequence of random variables $(\W_{k})_{k\geq 0}$ given by 
$$ \W_{k}=-N_{k}^{2}+\sigma(\Om)k\eps^{2}.$$
Then
$$ \E[\W_{k+1}-\W_{k}\arrowvert \W_{k}]=\E[-(N_{k+1}^{2}-N_{k}^{2})+\sigma\eps^{2}\arrowvert N_{k}]\leq 0$$
That is, $\W_{k}$ is a \textit{supermartingale}. Using the \textit{OSTh} in the same way as before we get
$$\E[-\mu^{2}(x_{\tau})+\sigma\tau\eps^{2}]\leq -\mu^{2}(x_{0}).$$
Therefore,
\begin{equation}
\label{paratau}
\E[\sigma\tau\eps^{2}]\leq -\mu^{2}(x_{0})+\E[\mu^{2}(x_{\tau})]\leq \E[\mu^{2}(x_{\tau})].
\end{equation}
Hence, we need a bound for $\E[\mu^{2}(x_{\tau})]$. We have
$$\E[\mu^{2}(x_{\tau})]=\E[\mu^{2}(x_{\tau})\arrowvert\mu(x_{\tau})<b]\P(\mu(x_{\tau})<b)+\E[\mu^{2}(x_{\tau})\arrowvert\mu(x_{\tau})\geq b]\P(\mu(x_{\tau})\geq b).$$
It holds that
$\E[\mu^{2}(x_{\tau})\arrowvert\mu(x_{\tau})<b]\leq b^{2}$ and $\P(\mu(x_{\tau})<b)\leq 1$. 
If we call $M(\eps_{0})=\max_{x\in\Om_{\eps_{0}}}\arrowvert\mu(x)\arrowvert$ it holds  $\E[\mu^{2}(x_{\tau})\arrowvert\mu(x_{\tau})\geq b]\leq M(\eps_{0})^{2}$. Finaly using \eqref{desb} we obtain $\P(\mu(x_{\tau})\geq b)\leq \frac{c(\Om,\theta)r_{0}}{b}$. Thus
$$
 \E[\mu^{2}(x_{\tau})]\leq b^{2}+M(\eps_{0})^{2}\frac{c(\Om,\theta)r_{0}}{b}.
 $$
 Recall that we imposed $0<b<a$. Then
\begin{equation}
\label{bmasb}
 \E[\mu^{2}(x_{\tau})]\leq a^{2}+M(\eps_{0})^{2}\frac{c(\Om,\theta)r_{0}}{b}.
\end{equation}
On the other hand, we have
$$\sigma\E[\tau\eps^{2}]\geq\P(\tau\eps^{2}\geq a)a\sigma.$$
Using \eqref{paratau} and \eqref{bmasb} we get 
$$
\P \Big(\tau\geq\frac{a}{\eps^{2}}\Big)\leq \frac{a}{\sigma}+M(\eps_{0})^{2}\frac{c(\Om,\theta)r_{0}}{b\sigma a}.
$$
If we ask $$ \frac{a}{\sigma} < \frac{\eta}{2}$$ we arrive to 
$$
\P \Big(\tau\geq\frac{a}{\eps^{2}}\Big) \leq \frac{\eta}{2}+M(\eps_{0})^{2}\frac{c(\Om,\theta)r_{0}}{b a\sigma}<\eta
$$
which is true if we impose that
$$ r_{0}< \frac{b\eta a \sigma}{2M(\eps_{0})c(\Om,\theta)}.$$
Thus we achieve the second inequality of the lemma, and the proof is finished.
\end{proof}

Now we are ready to prove the second condition in the Arzela-Ascoli type lemma.

\begin{lemma}\label{lem.ascoli.arzela.asymp} Given $\delta>0$ there are constants
    $r_0$ and $\eps_0$ such that for every $\eps < \eps_0$
    and any $x, y \in \overline{\Omega}$ with $|x - y | < r_0 $
    it holds
$$
|u^\eps (x) - u^\eps (y)| < \delta \qquad \mbox{and} \qquad |v^\eps (x) - v^\eps (y)| < \delta.
$$
\end{lemma}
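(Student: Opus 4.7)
The plan is to adapt the two-step template standard in the game-theoretic PDE literature (cf.~\cite{MPR,MPRa,MPRb}): first I would establish asymptotic continuity at the boundary with the help of the two hitting lemmas above, and then propagate it to the interior by a coupling argument.

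\emph{Step 1 (boundary asymptotic continuity).} The first goal is to show that for every $\delta>0$ there exist $r_{0},\eps_{0}>0$ such that $|u^{\eps}(x_{0})-f(y)|<\delta$ and $|v^{\eps}(x_{0})-g(y)|<\delta$ whenever $y\in\partial\Omega$, $|x_{0}-y|<r_{0}$ and $\eps<\eps_{0}$. Fix small parameters $a,\eta>0$. For the upper bound on $u^{\eps}(x_{0})$, I let Player~I use the ``point to $y$'' strategy of Lemma~\ref{lema.estim.ToW} whenever the token is in board~1, and rely on the analogous random walk hitting lemma when it is in board~2. A martingale/sub-sequence argument, analogous to the proofs of those two lemmas applied to the board-1 and board-2 sub-sequences of moves, shows that for $r_{0},\eps$ small enough $\P(\tau\geq a/\eps^{2})<\eta$ and $\P(|x_{\tau}-y|\geq a)<\eta$, independently of the opponent's strategy. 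On $\{\tau<a/\eps^{2}\}$ the number of board-switch events is stochastically dominated by a binomial$(a/\eps^{2},\eps^{2})$ variable, hence a switch occurs with probability at most $1-(1-\eps^{2})^{a/\eps^{2}}\leq 2a$ for $\eps$ small. Thus with probability $1-O(\eta+a)$ the game exits in the starting board within distance $a$ of $y$, and the Lipschitz continuity of $\ol f$ together with the uniform bound of Lemma~\ref{lem.ascoli.arzela.acot} gives $|\E[h(x_{\tau})]-f(y)|\leq L\,a+C(\eta+a)$. Choosing $a,\eta$ small yields $u^{\eps}(x_{0})\leq f(y)+\delta$; repeating the argument with Player~II using ``point to $y$'' produces the matching lower bound. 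The statement for $v^{\eps}$ is identical starting the game in board~2.

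\emph{Step 2 (interior asymptotic continuity via coupling).} For general $x,z\in\overline{\Omega}$ with $|x-z|<r_{0}/3$ I couple two games started at $(x,j)$ and $(z,j)$ (with $j=1$ for $u^{\eps}$ and $j=2$ for $v^{\eps}$): the two games share the same coin tosses, the same board-switch events and the same random walk vectors, and the players' strategies are translated by the constant vector $z-x$. As long as both tokens remain in $\Omega$ this preserves $x_{k}^{(z)}-x_{k}^{(x)}=z-x$. Setting $\tau^{*}=\min\{\tau^{(x)},\tau^{(z)}\}$ and assuming, without loss of generality, that the $x$-token exits first, the $z$-token then sits at $x_{\tau^{*}}^{(x)}+(z-x)$, which is within $r_{0}/3$ of some boundary point $y$. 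The payoff already collected by the $x$-game is within $L|z-x|$ of $f(y)$ by the Lipschitz property of $\ol f$, while by Step~1 the expected continuation value of the $z$-game from its current position is within $\delta$ of $f(y)$. Picking near-optimal strategies in one game and translating them to the other turns this pointwise comparison into two-sided control of $u^{\eps}(x)-u^{\eps}(z)$, giving $|u^{\eps}(x)-u^{\eps}(z)|<C\delta$; the estimate for $v^{\eps}$ is entirely analogous.

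\emph{Main obstacle.} The most delicate point is Step~2. The synchronous coupling is a particular joint strategy and has to be reconciled with the $\inf\sup$ definition of the value functions, which forces one to translate near-optimal strategies between the two games and control the error coming from their possibly asynchronous exit times. It is precisely this asynchrony that requires the invocation of the boundary estimate of Step~1, so the interior continuity statement genuinely rests on the boundary one.
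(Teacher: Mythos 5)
Your proposal is correct and follows essentially the same two-step strategy as the paper: a boundary estimate obtained by letting one player use the ``point to $y$'' strategy, conditioning on the event that no board switch occurs before the game ends (so the single-board hitting estimates from Lemmas~\ref{lema.estim.ToW} and its random-walk analogue apply, and the probability of a switch is controlled by $1-(1-\eps^2)^{a/\eps^2}$), followed by a synchronous translation coupling of two games started at nearby interior points that share coin tosses, random moves and board-switch events. The paper labels the player who ``points to $y$'' with the role opposite to yours for each inequality, but since the game has a value ($\inf\sup=\sup\inf$) this is only a notational difference.
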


\begin{proof} We deal with the estimate for $u^\eps$. 
Recall that $u^{\eps}$ is the value of the game playing in the first board (where we play Tug-of-War).
The computations for $v^\eps$ are similar.

First, we start with two close points $x$ and $y$ with $y\not\in \Omega$ and $x\in \Omega$.  
We have that $u^{\eps}(y)=\ol{f}(y)$ for $y\in\partial\Om$.
Given $\eta >0$ we take $a$, $r_{0}$, $\eps_{0}$ and $S^{*}_{I}$ the strategy as in Lemma \ref{lema.estim.ToW}.
Let 
$$ A=\Big\{\mbox{the position does not change board in the first } \ \lceil \frac{a}{\eps^{2}}\rceil \mbox{ plays and } \tau < \lceil \frac{a}{\eps^{2}}\rceil \Big\}.$$

We consider two cases.

\textbf{1st case:} We are going to show that $u^{\eps}(x_{0})-\ol{f}(y) \geq - A(a,\eta)$ with $A(a,\eta)\searrow 0$ if $a\rightarrow 0$ and 
$\eta\rightarrow 0$.
We have
$$u^{\eps}(x_{0})\geq \inf_{S_{II}}\E^{x_{0}}_{S^{*}_{I},S_{II}}[h(x_{\tau})].$$
Now 
$$
\begin{array}{l}
\displaystyle 
\E^{x_{0}}_{S^{*}_{I},S_{II}}[ h (x_{\tau})]  = \E^{x_{0}}_{S^{*}_{I},S_{II}}[h( x_{\tau})\arrowvert A]\P(A)
 +  \E^{x_{0}}_{S^{*}_{I},S_{II}}[h( x_{\tau})\arrowvert A^{c}]\P(A^{c})
\\[10pt]
\qquad \displaystyle \geq \E^{x_{0}}_{S^{*}_{I},S_{II}}[\ol{f}(x_{\tau})\arrowvert A]\P(A)-\max\{\lvert\ol{f}|,\lvert\ol{g}|\}\P(A^{c}).
\end{array}
$$
Now we estimate $\P(A)$ and $\P(A^{c})$. We have that  
$$\P(A^{c})\leq \P \Big(\mbox{the game changes board before }\lceil \frac{a}{\eps^{2}}\rceil \mbox{ plays}\Big)
+\P(\tau\geq\lceil \frac{a}{\eps^{2}}\rceil).$$
Hence we are left with two bounds. First, we have 
 \begin{equation}
 \label{Ac1}
  \P \Big(\mbox{the game changes board before }\lceil \frac{a}{\eps^{2}}\rceil \mbox{ plays} \Big)=1-(1-\eps^{2})^{\frac{a}{\eps^{2}}} \leq (1-e^{-a})+\eta
 \end{equation}
 for $\eps$ small enough.
 Here we are using that $(1-\eps^{2})^{\frac{a}{\eps^{2}}}\nearrow e^{-a}$.
 
Now, we observe that using Lemma \ref{lema.estim.ToW} we get
\begin{equation}
\label{Ac2}
\P\Big(\tau \geq \frac{a}{\eps^{2}}\Big) \leq \P\Big(\tau \geq \frac{a}{\eps_{0}^{2}}\Big)\leq \eta,
\end{equation}
for $\eps < \eps_{0}$.
 From \eqref{Ac1} and \eqref{Ac2} we obtain
$$ \P(A^{c})\leq (1-e^{-a})+\eta +\eta= (1-e^{-a})+2\eta $$
and hence
 $$ \P(A) =1-\P(A^{c}) \geq 1-[(1-e^{-a})+2\eta] .$$
 Then we obtain
 \begin{equation}
 \label{arriba}
 \begin{array}{l}
\displaystyle  \E^{x_{0}}_{S^{*}_{I},S_{II}}[h(x_{\tau})] 
\displaystyle \geq \E^{x_{0}}_{S^{*}_{I},S_{II}}[\ol{f}(x_{\tau})\arrowvert A] (1-[(1-e^{-a})+2\eta])-\max\{\lvert\ol{f}|,\lvert\ol{g}|\}[(1-e^{-a})+2\eta] .
\end{array}
 \end{equation}
Let us analyze the expected value $\E^{x_{0}}_{S^{*}_{I},S_{II}}[\ol{f}(x_{\tau})\arrowvert A]$. 
Again we need to consider two events,
$$ A_{1}=A\cap \{ |x_{\tau}-y|< a \} \qquad \mbox{and} \qquad A_{2}=A\cap \{ |x_{\tau}-y|\geq a\}.$$
We have that $ A=A_{1}\cup A_{2}$.
Then 
\begin{equation}
\label{retomo}
  \E^{x_{0}}_{S^{*}_{I},S_{II}}[\ol{f}(x_{\tau})\arrowvert A]=\E^{x_{0}}_{S^{*}_{I},S_{II}}[\ol{f}(x_{\tau})\arrowvert A_{1}]\P(A_{1})+\E^{x_{0}}_{S^{*}_{I},S_{II}}[\ol{f}(x_{\tau})\arrowvert A_{2}]\P(A_{2}).
\end{equation}
Now we observe that
\begin{equation}
\label{a2}
\P(A_{2})\leq \P( |x_{\tau}-y|\geq a)\leq \eta .
\end{equation}
To get a bound for the other case we observe that
$ A_{1}^{c}=A^{c}\cup  \{ |x_{\tau}-y|\geq a\}$.
Therefore 
$$ \P(A_{1})=1-\P(A_{1}^{c})\geq 1-[\P(A^{c})+\P(|x_{\tau}-y|\geq a)],$$
and we arrive to
 \begin{equation}
 \label{a1}
 \P(A_{1})\geq 1-[(1-e^{-a})+2\eta+\eta]=1-[(1-e^{-a})+3\eta].
 \end{equation}
If we go back to \eqref{retomo} and use \eqref{a1} and \eqref{a2} we get 
\begin{equation}
\label{retomo2}
 \E^{x_{0}}_{S^{*}_{I},S_{II}}[\ol{f}(x_{\tau})\arrowvert A]\geq\E^{x_{0}}_{S^{*}_{I},S_{II}}[\ol{f}(x_{\tau})\arrowvert A_{1}](1-[(1-e^{-a})+3\eta])-\max \{\lvert\ol{f}|\}\eta .
\end{equation}
Using that $\ol{f}$ is Lipschitz we obtain 
$$ \ol{f}(x_{\tau})\geq \ol{f}(y)-L|x_{\tau}-y|\geq \ol{f}(y)-La ,$$
and then we obtain (using that $(\ol{f}(y)-La)$ does not depend on the strategies)  
  \begin{equation}
\E^{x_{0}}_{S^{*}_{I},S_{II}}[\ol{f}(x_{\tau})\arrowvert A]\geq(\ol{f}(y)-La)(1-[(1-e^{-a})+3\eta])-\max \{\lvert\ol{f}|\}\eta.
   \end{equation}
Recalling \eqref{arriba} we obtain
 $$
 \begin{array}{l}
 \displaystyle  \E^{x_{0}}_{S^{*}_{I},S_{II}}[h ( x_{\tau})] \displaystyle \\[10pt]
\quad  \geq ((\ol{f}(y)-La)(1-[(1-e^{-a})+3\eta]) -\max \{\lvert\ol{f}|\}\eta ) (1-[(1-e^{-a})+2\eta]) 
 \displaystyle  -\max\{\lvert\ol{f}|,\lvert\ol{g}|\}[(1-e^{-a})+2\eta].
 \end{array}
 $$ 
Notice that when $\eta \rightarrow 0$ and $a\rightarrow 0$ the the right hand side
goes to $\ol{f}(y)$, hence we have obtained
 $$ \E^{x_{0}}_{S^{*}_{I},S_{II}}[h( x_{\tau})]\geq \ol{f}(y)- A(a,\eta)$$
 with $A(a,\eta)\to 0$. Taking the infimum over all possible strategies $S_{II}$ we get 
 $$ u^{\eps}(x_{0})\geq \ol{f}(y)- A(a,\eta)$$ 
 with $A(a,\eta)\to 0$ as $\eta\rightarrow 0$ and $a\rightarrow 0$ as we wanted to show.

\textbf{2nd case:} Now we want to show that $u^{\eps}(x_{0})-\ol{f}(y)\leq B(a,\eta)$ with $B(a,\eta)\searrow 0$ as $\eta\rightarrow 0$ and 
$a\rightarrow 0$.
In this case we just use the strategy $S^*$ from Lemma \ref{lema.estim.ToW} as the strategy for the second player
$S^{*}_{II}$ and we obtain
$$ u^{\eps}(x_{0})\leq \sup_{S_{II}}\E^{x_{0}}_{S_{I},S^{*}_{II}}[h(x_{\tau})].$$
Using again the set $A$ that we considered in the previous case we obtain
$$ \E^{x_{0}}_{S_{I},S^{*}_{II}}[ h( x_{\tau})]= \E^{x_{0}}_{S_{I},S^{*}_{II}}[\ol{f}( x_{\tau})\arrowvert A]\P(A)+ 
\E^{x_{0}}_{S_{I},S^{*}_{II}}[h( x_{\tau})\arrowvert A^{c}]\P(A^{c}).$$
We have that $\P(A) \leq 1$ and $\P(A^{c})\leq (1-e^{-a})+2\eta $. Hence we get
\begin{equation}
\label{retomo3}
\E^{x_{0}}_{S_{I},S^{*}_{II}}[h( x_{\tau})]\leq \E^{x_{0}}_{S_{I},S^{*}_{II}}[\ol{f}( x_{\tau})\arrowvert A]+\max\{\lvert\ol{f}|,\lvert\ol{g}|\}[(1-e^{-a})+2\eta].
\end{equation}
To bound $\E^{x_{0}}_{S_{I},S^{*}_{II}}[\ol{f}(x_{\tau})\arrowvert A]$ we will use again the sets $A_{1}$ and $A_{2}$ 
as in the previous case. We have
$$ \E^{x_{0}}_{S_{I},S^{*}_{II}}[\ol{f}(x_{\tau})\arrowvert A]=\E^{x_{0}}_{S_{I},S^{*}_{II}}[\ol{f}(x_{\tau})\arrowvert A_{1}]\P(A_{1})+\E^{x_{0}}_{S_{I},S^{*}_{II}}[\ol{f}(x_{\tau})\arrowvert A_{2}]\P(A_{2}).$$
Now we use that $\P(A_{1}) \leq 1$ and $\P(A_{2})\leq c\eta$ to obtain
$$ \E^{x_{0}}_{S_{I},S^{*}_{II}}[\ol{f}(x_{\tau})\arrowvert A]\leq \E^{x_{0}}_{S_{I},S^{*}_{II}}[\ol{f}(x_{\tau})\arrowvert A_{1}]+\max\{\lvert\ol{f}|\}\eta .$$
Now for $ \E^{x_{0}}_{S_{I},S^{*}_{II}}[\ol{f}(x_{\tau})\arrowvert A_{1}]$ we use that $\ol{f}$ is Lipschitz to obtain
$$ \E^{x_{0}}_{S_{I},S^{*}_{II}}[\ol{f}(x_{\tau})\arrowvert A]\leq \E^{x_{0}}_{S_{I},S^{*}_{II}}[\ol{f}(y)+La\arrowvert A_{1}]+\max\{\lvert\ol{f}|\}\eta .$$
As $(\ol{f}(y)+La)$ does not depend on the strategies we have
$$ \E^{x_{0}}_{S_{I},S^{**}_{II}}[\ol{f}(x_{\tau})\arrowvert A]\leq(\ol{f}(y)+La)+\max\{\lvert\ol{f}|\}\eta ,$$
and therefore we conclude that 
$$ \E^{x_{0}}_{S_{I},S^{*}_{II}}[h( x_{\tau} )]\leq \ol{f}(y)+La+\max\{\lvert\ol{f}|\}\eta + \max\{\lvert\ol{f}|,\lvert\ol{g}|\}[(1-e^{-a})+2\eta].
$$
We have proved that 
$$ \E^{x_{0}}_{S_{I},S^{*}_{II}}[ h( x_{\tau})]\leq \ol{f}(y) + B(a,\eta) $$
with $B(a,\eta)\to 0$. Taking supremum over the strategies for Player I we obtain 
$$ u^{\eps}(x_{0})\leq \ol{f}(y)+B(a,\eta)$$
with $B(a,\eta)\rightarrow 0$ as $\eta\rightarrow 0$ and $a\rightarrow 0$. 

Therefore, we conclude that 
$$ |u^{\eps}(x_{0})-\ol{f}(y)|< \max\{A(a,\eta),B(a,\eta)\},$$
that holds when $y \not \in\Omega$ and $x_0$ is close to $y$.

An analogous estimate holds for $v^\eps$. 

Now, given two points $x_0$ and $z_0$ inside $\Omega$ with $|x_0-z_0|<r_0$ we couple the  
game starting at $x_0$ with the game starting at $z_0$ making the same movements and also
changing board simultaneously. This coupling generates two sequences of positions $(x_i,j_i)$ and $(z_i,k_i)$
such that $|x_i - z_i|<r_0$ and $j_i=k_i$ (since they change boars at the same time both games are at
the same board at every turn). This continues until one of the games exits the domain (say at $x_\tau \not\in \Omega$).
At this point for the game starting at $z_0$ we have that its position $z_\tau$ is close to the exterior point $x_\tau \not\in \Omega$ (since we
have $|x_\tau - z_\tau|<r_0$) and hence we can use our previous estimates for points close to the boundary to conclude that 
$$ |u^{\eps}(x_{0})- u^\eps (z_0)|< \delta, \qquad \mbox{ and }
\qquad |v^{\eps}(x_{0})- v^\eps (z_0)|< \delta. $$
This ends the proof. 
 \end{proof}
 
 As a consequence, we have convergence of $(u^{\eps},v^{\eps})$ as $\eps \to 0$ along subsequences. 
 
 \begin{theorem} \label{teo.conv.unif}
 Let $(u^{\eps},v^{\eps})$ be solutions to the DPP, then there exists a subsequence
 $\eps_k \to 0$ and a pair on functions $(u,v)$ continuous in $\overline{\Omega}$ such that  
 $$
 u^{\eps_k} \to u, \qquad \mbox{ and } \qquad  v^{\eps_k} \to v, 
 $$ 
 uniformly in  $\overline{\Omega}$.
 \end{theorem}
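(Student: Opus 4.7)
The plan is a direct application of the Arzelà–Ascoli type lemma (Lemma \ref{lem.ascoli.arzela}) to each of the two sequences $\{u^\eps\}_\eps$ and $\{v^\eps\}_\eps$. The two hypotheses needed by that lemma have already been established: uniform boundedness is given by Lemma \ref{lem.ascoli.arzela.acot}, and the asymptotic equicontinuity condition (condition (2) of Lemma \ref{lem.ascoli.arzela}) is exactly the content of Lemma \ref{lem.ascoli.arzela.asymp}, which produces, for any given $\delta>0$, constants $r_0$ and $\eps_0$ that work simultaneously for both $u^\eps$ and $v^\eps$.

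First I would apply Lemma \ref{lem.ascoli.arzela} to $\{u^\eps\}_\eps$, extracting a subsequence $\{u^{\eps_k}\}_k$ converging uniformly in $\overline{\Omega}$ to some continuous function $u$. Next, since the same two hypotheses hold for $\{v^{\eps_k}\}_k$ (the bounds in Lemmas \ref{lem.ascoli.arzela.acot} and \ref{lem.ascoli.arzela.asymp} are independent of $\eps$ and therefore still valid along any subsequence), I would apply the lemma again to $\{v^{\eps_k}\}_k$ to extract a further subsequence, which by abuse of notation we still denote $\{\eps_k\}$, along which $v^{\eps_k} \to v$ uniformly in $\overline{\Omega}$ for some continuous $v$. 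Of course $u^{\eps_k} \to u$ uniformly still holds along this further subsequence.

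The continuity of the limits $u$ and $v$ on $\overline{\Omega}$ is part of the conclusion of Lemma \ref{lem.ascoli.arzela}, so nothing extra needs to be proved. Since this is essentially a bookkeeping argument that simply invokes the previously proved lemmas, there is no substantial obstacle; the only point to be careful about is that the two extractions must be performed successively so that a single subsequence $\eps_k \to 0$ produces uniform convergence of both components simultaneously.
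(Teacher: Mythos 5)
Your proof is correct and follows the same route as the paper: both rely on Lemma \ref{lem.ascoli.arzela.acot} (uniform bounds) and Lemma \ref{lem.ascoli.arzela.asymp} (asymptotic equicontinuity) to invoke the Arzel\`a--Ascoli type Lemma \ref{lem.ascoli.arzela}. You merely spell out the successive extraction needed to get a single subsequence $\eps_k$ along which both $u^{\eps_k}$ and $v^{\eps_k}$ converge, a detail the paper leaves implicit.
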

 
 \begin{proof} Lemma \ref{lem.ascoli.arzela.acot} and
 Lemma \ref{lem.ascoli.arzela.asymp} imply that we can use 
 the Arzela-Ascoli type lemma, Lemma \ref{lem.ascoli.arzela}. 
 \end{proof}

\section{Existence of viscosity solutions} \label{sect-limiteviscoso}

Now, we prove that any possible uniform limit of $(u^\eps,v^\eps)$ is a viscosity solution to
the limit PDE problem \eqref{ED1}. 

\begin{theorem} \label{teo.converge.222}
Any uniform limit of the values of the game $(u^\eps,v^\eps)$, $(u,v)$, is a viscosity
solution to
\begin{equation}
\label{ED1.th}
 \left\lbrace
\begin{array}{ll}
- \displaystyle \half \Delta_{\infty}u(x) + u(x) - v(x)=0 \qquad &  \ x \in \Omega,  \\[10pt]
-   \displaystyle \frac{\kappa}{2} \Delta v(x) + v(x) - u(x)=0  \qquad &  \ x \in \Omega,  \\[10pt]
u(x) = f(x) \qquad & \ x \in \partial \Omega,  \\[10pt]
v(x) = g(x) \qquad & \ x \in \partial \Omega.
\end{array}
\right.
\end{equation}
\end{theorem}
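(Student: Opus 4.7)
My plan is to verify that the uniform limit $(u,v)$ satisfies the boundary data and the two interior equations in the viscosity sense, working from the two DPP equations in \eqref{DPP}.

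For the boundary conditions, I would use the asymptotic estimates already established in Lemma \ref{lem.ascoli.arzela.asymp}: given $\delta>0$, there is $r_0>0$ so that $|u^\eps(x_0)-\ol f(y)|<\delta$ whenever $y\in\partial\Om$ and $|x_0-y|<r_0$ (and the analogous estimate for $v^\eps$ and $\ol g$). Passing to the uniform limit in $\eps$ and then letting $x_0\to y$ gives $u(y)=f(y)$ and $v(y)=g(y)$, so both $u$ and $v$ are continuous up to the boundary with the prescribed values.

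For the interior equations, the plan is a standard viscosity argument based on the DPP. Take $\phi\in C^{2}$ touching $u$ strictly from below at an interior point $x_0$. Since $u^\eps\to u$ uniformly and the touching is strict, the function $u^\eps-\phi$ attains its minimum on a small closed ball around $x_0$ at points $x_\eps\to x_0$, with $c_\eps:=u^\eps(x_\eps)-\phi(x_\eps)\to 0$. Using $u^\eps(y)\geq \phi(y)+c_\eps$ in a neighborhood, substituting into the first equation of \eqref{DPP}, and rearranging, I obtain
\[
v^\eps(x_\eps)-\phi(x_\eps)+\frac{1-\eps^{2}}{\eps^{2}}\Big[\tfrac{1}{2}\sup_{B_\eps(x_\eps)}\phi+\tfrac{1}{2}\inf_{B_\eps(x_\eps)}\phi-\phi(x_\eps)\Big]\leq c_\eps.
\]
When $\nabla\phi(x_0)\neq 0$, a Taylor expansion (precisely the one used in the proof of the asymptotic mean value property for $\Delta_\infty$) gives $\tfrac12(\sup+\inf)\phi-\phi(x_\eps)=\tfrac{\eps^{2}}{2}\Delta_\infty\phi(x_\eps)+o(\eps^{2})$, and letting $\eps\to 0$ yields $-\tfrac12\Delta_\infty\phi(x_0)+\phi(x_0)-v(x_0)\geq 0$, which is the supersolution inequality. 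The subsolution case is symmetric, using a test function touching from above and the DPP in the reverse direction. For the second equation I proceed analogously with the second line of \eqref{DPP}; a Taylor expansion of the average over $B_\eps(x_\eps)$ gives $\intav_{B_\eps(x_\eps)}\phi-\phi(x_\eps)=\tfrac{\kappa\eps^{2}}{2}\Delta\phi(x_\eps)+O(\eps^{3})$ with $\kappa=\intav_{B_1(0)}z_j^{2}\,dz$, producing in the limit the equation $-\tfrac{\kappa}{2}\Delta v+v-u=0$ in the viscosity sense. Since the Laplacian is continuous in $(p,X)$, no envelope subtlety arises for the second equation.

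The main obstacle is the infinity Laplacian at a point where $\nabla\phi(x_0)=0$, where $\Delta_\infty\phi$ is not defined and one must test the envelopes $P^{*}$ and $P_{*}$. This is handled by using the one-sided Taylor bounds $\sup_{B_\eps(x_\eps)}\phi\geq \phi(x_\eps)$ and $\inf_{B_\eps(x_\eps)}\phi\leq \phi(x_\eps)$, which give, in the zero-gradient limit, $\tfrac{1}{\eps^{2}}\bigl[\tfrac12(\sup+\inf)\phi-\phi(x_\eps)\bigr]\to \tfrac{1}{4}\bigl(\lambda_{\min}(D^{2}\phi(x_0))+\lambda_{\max}(D^{2}\phi(x_0))\bigr)$. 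For the supersolution test this yields $v(x_0)-\phi(x_0)+\tfrac{1}{4}(\lambda_{\min}+\lambda_{\max})\leq 0$, and since $\lambda_{\max}\geq\lambda_{\min}$ one has $\tfrac12\lambda_{\min}\leq \tfrac{1}{4}(\lambda_{\min}+\lambda_{\max})$, so $v(x_0)-\phi(x_0)+\tfrac12\lambda_{\min}(D^{2}\phi(x_0))\leq 0$, which is exactly $P^{*}\geq 0$. The subsolution case uses the symmetric inequality $\tfrac12\lambda_{\max}\geq \tfrac{1}{4}(\lambda_{\min}+\lambda_{\max})$ to recover $P_{*}\leq 0$. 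Putting these pieces together gives that the uniform limit $(u,v)$ is a viscosity solution of \eqref{ED1.th}.
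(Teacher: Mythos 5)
Your overall strategy matches the paper's: pass to the viscosity limit through the DPP, use Taylor expansions, and treat the degenerate ($\nabla\phi(x_0)=0$) case for $\Delta_\infty$ through the semicontinuous envelopes. The boundary-condition step and the second (Laplacian) equation are handled correctly and essentially as in the paper.

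However, there is a genuine gap in your treatment of the degenerate case for the first equation. You claim that the one-sided Taylor bounds $\sup_{B_\eps(x_\eps)}\phi \geq \phi(x_\eps)$, $\inf_{B_\eps(x_\eps)}\phi \leq \phi(x_\eps)$ yield
\[
\frac{1}{\eps^{2}}\Bigl[\tfrac12\bigl(\sup_{B_\eps(x_\eps)}\phi + \inf_{B_\eps(x_\eps)}\phi\bigr)-\phi(x_\eps)\Bigr]\;\longrightarrow\;\tfrac14\bigl(\lambda_{\min}+\lambda_{\max}\bigr),
\]
but this is not correct. That limit holds only if $\nabla\phi(x_\eps)=0$ for all $\eps$. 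In general $\nabla\phi(x_\eps)\neq 0$ even though $\nabla\phi(x_0)=0$, and the size of $|\nabla\phi(x_\eps)|$ relative to $\eps$ controls the answer: if $|\nabla\phi(x_\eps)|\gg\eps$ the extremizers lie at the antipodal boundary points $x_\eps\pm\eps\,\nabla\phi(x_\eps)/|\nabla\phi(x_\eps)|$, and the quotient tends to $\tfrac12\langle D^2\phi(x_0)z,z\rangle$ for some unit vector $z$ that is the accumulation point of the gradient direction; if $|\nabla\phi(x_\eps)|\lesssim\eps$ the extremizers may sit in the \emph{interior} of $B_\eps(x_\eps)$, in which case $D^2\phi$ at the maximizer is negative semidefinite and the quotient is $\leq 0$. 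The two-sided Taylor bounds you cite are trivially satisfied and do not pin down any particular limit. The paper's proof handles exactly this difficulty: it picks the maximizer $w_\eps$, replaces the minimizer by the antipodal point $\overline{w_\eps}$, and splits into the two cases $|(w_\eps-x_\eps)/\eps|=1$ (boundary) and $|(w_\eps-x_\eps)/\eps|<1$ (interior), verifying $\tfrac12F_{1,*}(0,D^2\phi(x_0))+u(x_0)-v(x_0)\leq 0$ in each case. Your proof needs this case analysis; the single claimed limit $\tfrac14(\lambda_{\min}+\lambda_{\max})$ and the subsequent inequality chain do not suffice, even though the final inequality happens to look plausible.
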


\begin{proof} Since $u^\eps =\ol{f}$ and $v^\eps =\ol{g}$ in $\R^N \setminus \Omega$ we have that 
$u = f$ and $v= g$ on $\partial \Omega$.

{\it Infinity Laplacian.} let us start by showing that $u$ is a viscosity subsolution to 
$$ - \half \Delta_{\infty}u(x)+u(x)-v(x)=0. $$
Let $x_{0} \in \Om$ and $\phi \in {C}^{2}(\Om)$ auch that $u(x_{0})-\phi(x_{0})=0$ and $u-\phi$ has an absolute 
maximum at $x_{0}$. Then, there exists a sequence $(x_{\eps})_{\eps >0}$ with $ x_{\eps} \rightarrow x_{0}$ as $\eps \rightarrow 0$ 
verifying  
$$
u^{\eps}(y)-\phi(y)\leq u^{\eps}(x_{\eps})-\phi(x_{\eps})+\eps^{3}.
$$
Then we obtain
\begin{equation}
\label{ast}
 u^{\eps}(y)-u^{\eps}(x_{\eps}) \leq \phi(y)-\phi(x_{\eps})+\eps^{3}
\end{equation}

Now, using the DPP, we get 
$$
u^{\eps}(x_{\eps})=\eps^{2}v^{\eps}(x_{\eps})+(1-\eps^{2})
\left\{\half \sup_{y \in B_{\eps}(x_{\eps})}u^{\eps}(y) + \half \inf_{y \in B_{\eps}(x_{\eps})}u^{\eps}\right\}
$$
and hence  
$$ 0=\eps^{2}(v^{\eps}(x_{\eps})-u^{\eps}(x_{\eps}))+(1-\eps^{2})
\left\{\half \sup_{y \in B_{\eps}(x_{\eps})}(u^{\eps}(y)-u^{\eps}(x_{\eps})) 
+ \half \inf_{y \in B_{\eps}(x_{\eps})}(u^{\eps}(y)-u^{\eps}(x_{\eps}))\right\}.$$
Using \eqref{ast} and that $\phi$ is smooth we obtain
\begin{equation}
\label{grad}
 0 \leq \eps^{2}(v^{\eps}(x_{\eps})-u^{\eps}(x_{\eps}))+(1-\eps^{2})\left\{\half \max_{y \in \ol{B}_{\eps}(x_{\eps})}(\phi(y)-\phi(x_{\eps})) + \half \min_{y \in \ol{B}_{\eps}(x_{\eps})}(\phi(y)-\phi(x_{\eps}))\right\}+\eps^{3}.
\end{equation}

Now, assume that $\nabla \phi (x_0) \neq 0$. Then, by continuity $\nabla \phi \neq 0$ in a ball $B_{r}(x_{0})$
for $r$ small. In particular, we have $\nabla \phi(x_{\eps}) \neq 0$. Call $w_{\eps}= \frac{\nabla \phi(x_{\eps})}{|\nabla \phi(x_{\eps}) |}$
and let $z_{\eps}$ with $| z_{\eps}|=1$ be such that 
$$
\max_{y\in\partial B_{\eps}(x_{\eps})}\phi(y)=\phi(x_{\eps}+\eps z_{\eps}).
$$
We have
$$
\begin{array}{l}
\displaystyle 
\phi(x_{\eps}+\eps z_{\eps})-\phi(x_{\eps}) =\eps\langle\nabla\phi(x_{\eps}),z_{\eps}\rangle+o(\eps)
 \displaystyle
\leq \eps\langle\nabla\phi(x_{\eps}),w_{\eps}\rangle+o(\eps)
=\phi(x_{\eps}+\eps w_{\eps})-\phi(x_{\eps})+o(\eps).
\end{array}
$$
On the other hand 
$$
\phi(x_{\eps}+\eps w_{\eps})-\phi(x_{\eps})=\eps\langle\nabla\phi(x_{\eps}),w_{\eps}\rangle+o(\eps)\leq \phi(x_{\eps}+\eps z_{\eps})-\phi(x_{\eps}).
$$
Therefore, we get 
$$
\eps\langle\nabla\phi(x_{\eps}),w_{\eps}\rangle+o(\eps)\leq \eps\langle\nabla\phi(x_{\eps}),z_{\eps}\rangle+o(\eps)\leq \eps\langle\nabla\phi(x_{\eps}),w_{\eps}\rangle+o(\eps).
$$
multiplying by $\eps^{-1}$ and taking the limit we arrive to
$$
\langle\nabla\phi(x_{0}),w_{0}\rangle=\langle\nabla\phi(x_{0}),z_{0}\rangle
$$
with $w_{0}=\frac{\nabla \phi(x_{0})}{|\nabla \phi(x_{0}) |}$
and we conclude that  
$$
z_{0}=w_{0}=\frac{\nabla \phi(x_{0})}{|\nabla \phi(x_{0}) |}.
$$

Going back to \eqref{grad} we obtain
\begin{equation}
\label{zeps}
0 \leq \eps^{2}(v^{\eps}(x_{\eps})-u^{\eps}(x_{\eps}))+(1-\eps^{2})
\left\{\half (\phi(x_{\eps}+\eps z_{\eps})-\phi(x_{\eps})) + \half (\phi(x_{\eps}-\eps z_{\eps})-\phi(x_{\eps}))\right\} +\eps^{3}.
\end{equation}
Making Taylor expansions we get 
$$ \left\{\half (\phi(x_{\eps}+\eps z_{\eps})-\phi(x_{\eps})) + \half (\phi(x_{\eps}-\eps z_{\eps})-\phi(x_{\eps}))\right\}= \half \eps^{2} \langle D^{2}\phi(x_{\eps})z_{\eps},z_{\eps}\rangle+ \textit{o}(\eps^{2}).$$
Then, from \eqref{zeps}, 
$$ 0 \leq v^{\eps}(x_{\eps})-u^{\eps}(x_{\eps})+(1-\eps^{2}) \half \langle D^{2}\phi(x_{\eps})z_{\eps},z_{\eps}\rangle+ \frac{\textit{o}(\eps^{2})}{\eps^{2}},$$
and taking the limit as $\eps \rightarrow 0$ we get 
$$ 0 \leq v(x_{0})-u(x_{0})+ \half \langle D^{2}\phi(x_{0})w_{0},w_{0}\rangle,$$
that is, 
$$-\half \Delta_{\infty}\phi(x_{0}) + u(x_{0})-v(x_{0}) \leq 0. $$

Now, if $\nabla \phi(x_{0}) =0$ we have to use the 
upper and lower semicontinuous envelopes of the equation (notice that $\Delta_\infty u$ is not well defined when
$\nabla u=0$). 
For a symmetric matrix $M \in \R^{N\times N}$ and $\xi \in \R^{N}$, we define 
$$
F_1 (\xi, M) = 
\left\{
\begin{array}{ll}
\displaystyle -\langle M \frac{\xi}{|\xi |} ; \frac{\xi}{|\xi |}  \rangle \qquad & \xi \neq 0 \\[5pt]
0  \qquad & \xi = 0
\end{array}
\right.
$$
The semicontinuous envelopes of $F_1$ are defined as 
\begin{equation}
F_1^{\ast}(\xi, M) = 
\left\{
\begin{array}{ll}
\displaystyle -\langle M \frac{\xi}{|\xi |} ; \frac{\xi}{|\xi |}  \rangle \qquad & \xi \neq 0 \\[5pt]
\displaystyle \max \Big\{ \limsup_{\eta \rightarrow 0}-\langle M \frac{\eta}{|\eta |} ; \frac{\eta}{|\eta |}  \rangle; 0 \Big\}  \qquad & \xi = 0.
\end{array}
\right.
\end{equation}
and
\begin{equation}
F_{1,\ast}(\xi, M) = 
\left\{
\begin{array}{ll}
\displaystyle -\langle M \frac{\xi}{ | \xi |} ; \frac{\xi}{|\xi |}  \rangle \qquad & \xi \neq 0 \\[5pt]
\displaystyle  \min \Big\{ \liminf_{\eta \rightarrow 0}-\langle M \frac{\eta}{|\eta |} ; \frac{\eta}{|\eta |}  \rangle; 0 \Big\}  \qquad & \xi = 0.
\end{array}
\right.
\end{equation}
Now, we just remark that 
$$ -\max_{1 \leq i \leq N} \{ \lambda_{i} \} \leq - \langle M\frac{\xi}{|\xi |},\frac{\xi}{|\xi |}\rangle \leq -\min_{1\leq i \leq N} \{ \lambda_{i} \} $$ 
and hence we obtain
\begin{equation}
F_1^{\ast}(\xi, M) = 
\left\{
\begin{array}{ll}
\displaystyle -\langle M \frac{\xi}{ |\xi |} ; \frac{\xi}{ |\xi |}  \rangle \qquad & \xi \neq 0 \\[5pt]
\displaystyle \max \Big\{ - \min_{1\leq i\leq N}\{\lambda_{i}\} ; 0 \Big\}  \qquad & \xi = 0.
\end{array}
\right.
\end{equation}
and
\begin{equation}
F_{1,\ast}(\xi, M) = 
\left\{
\begin{array}{ll}
\displaystyle -\langle M \frac{\xi}{|\xi |} ; \frac{\xi}{|\xi |}  \rangle \qquad & \xi \neq 0 \\[5pt]
\displaystyle \min \Big\{ -\max_{1\leq i\leq N}\{\lambda_{i}\}; 0 \Big\}  \qquad & \xi = 0.
\end{array}
\right.
\end{equation}

Now, let us go back to the proof and show that
\begin{equation}
\half F_{1,\ast}(0,D^{2}\phi(x_{0}))+u(x_{0})-v(x_{0}) \leq 0.
\end{equation}
As before, we have a sequence $(x_{\eps})_{\eps >0}$ such that $x_{\eps} \rightarrow x_{0}$ 
\begin{equation}
\label{ast22}
 u^{\eps}(y)-u^{\eps}(x_{\eps}) \leq \phi(y)-\phi(x_{\eps})+\eps^{3}
\end{equation}
Using the DPP, that $\phi$ is smooth and \eqref{ast22} we obtain
\begin{equation}\label{**}
0 \leq (1-\eps^{2}) \Big\{ \half \max_{\ol{B_{\eps}(x_{\eps})}}(\phi(y)-\phi(x_{\eps}))+\half \min_{\ol{B_{\eps}(x_{\eps})}}(\phi(y)-\phi(x_{\eps})) 
\Big\}+\eps^{2}(v^{\eps}(x_{\eps})-u^{\eps}(x_{\eps})) +\eps^{3} .
\end{equation}
Let $ w_{\eps} \in \ol{B_{\eps}(x_{\eps})}$ be such that 
$$ \phi(w_{\eps})-\phi(x_{\eps})=\max_{\ol{B_{\eps}(x_{\eps})}}(\phi(y)-\phi(x_{\eps})) .$$
Let $\ol{w_{\eps}}$ be the symmetric point to $w_{\eps}$ in the ball $B_{\eps}(x_{\eps})$. Then we obtain
\begin{equation}
 0 \leq (1-\eps^{2}) \Big\{ \half(\phi(w_{\eps})-\phi(x_{\eps}))+\half(\phi(\ol{w_{\eps}})-\phi(x_{\eps})) \Big\} +\eps^{2}(v^{\eps}(x_{\eps})-u^{\eps}(x_{\eps})) .
\end{equation}
Using again Taylor's expansions 
\begin{equation}
 0 \leq (1-\eps^{2})  \half\langle D^{2}\phi(x_{\eps})\frac{(w_{\eps}-x_{\eps})}{\eps},\frac{(w_{\eps}-x_{\eps})}{\eps}\rangle+v^{\eps}(x_{\eps})-u^{\eps}(x_{\eps}) + o(1) .
 \end{equation}
If for a sequence $\eps \to 0$ we have 
$$
\left | \frac{(w_{\eps}- x_{\eps})}{\eps} \right | =1,
$$
then, extracting a subsequence if necessary, we have $z\in\R^{n}$ with $\Vert z \Vert =1 $ such that
$$
\frac{(w_{\eps}- x_{\eps})}{\eps} \to z.
$$
Passing to the limit we get 
$$ 0 \leq \half \langle D^2 \phi (x_{0}) z ,
z \rangle+v(x_{0})-u(x_{0}) .$$
Then
$$-\half\max_{1\leq i\leq n}\{\lambda_{i}\}+u(x_{0})-v(x_{0}) \leq 0,$$
that is, $\half F_{1,\ast}(0,D^{2}\phi(x_{0}))+u(x_{0})-v(x_{0})\leq 0$.

Now, if we have 
$$
\left | \frac{(w_{\eps}- x_{\eps})}{\eps} \right | <1
$$
for $\eps$ small, we just observe that at those points we have that $D^2 \phi (w_{\eps})$ is negative semidefinite. 
Hence, passing to the limit we obtain that $D^2 \phi (x_0)$ is also negative semidefinite and then 
every eigenvalue of $D^2 \phi (x_0)$ is less or equal to $0$. We conclude that
$$F_{1,\ast}(0,D^2 \phi (x_{0}))=\min \{ -\max_{1\leq i \leq n}\{ \lambda_{i}\};0\}=0.$$
Moreover, for $\eps$ small we have that $\langle D^{2}\phi(x_{\eps})\frac{(w_{\eps}-x_{\eps})}{\eps},\frac{(w_{\eps}-x_{\eps})}{\eps}\rangle \leq 0$. 
Then, 
$$0\leq v^{\eps}(x_{\eps})-u^{\eps}(x_{\eps}) + o(1).$$
Taking the limit as $\eps\rightarrow 0$ we obtain  
$$u(x_{0})-v(x_{0})\leq 0.$$
Therefore we arrive to
$$\half F_{1,\ast}(x_0,D^{2}\phi(x_{0}))+u(x_{0})-v(x_{0})\leq 0,$$
that is what we wanted to show.

The fact that $u$ is a supersolution can be proved in an analogous way. In this case we
need to show that 
$$\half F_1^{\ast}(\nabla\phi(x_{0}),D^{2}\phi(x_{0}))+u(x_{0})-v(x_{0})\geq 0,$$
for $x_{0} \in \Om$ and $\phi \in {C}^{2}(\Om)$ such that $u(x_{0})-\phi(x_{0})=0$ 
and $u-\phi$ has a strict minimum at $x_{0}$.

{\it Laplacian.}
Now, let us show that $v$ is a viscosity solution to 
$$ -\frac{\kappa}{2}\Delta v(x)+v(x)-u(x)=0.$$

Let us start by showing that $u$ is a subsolution. Let $\psi \in C^{2}(\Om)$ such that $v(x_{0})-\psi(x_{0})=0$ and has a maximum of $v-\psi$ at $x_{0} \in \Om$. As before, we have the existence of a sequence $(x_{\eps})_{\eps>0}$ such that 
$x_{\eps} \rightarrow x_{0}$ and $v^{\eps}-\psi$ and
\begin{equation}
\label{ast44}
 u^{\eps}(y)-u^{\eps}(x_{\eps}) \leq \psi(y)-\psi(x_{\eps})+\eps^{3}.
\end{equation}
Therefore, from the DPP, we obtain
$$
0\leq (u^{\eps}(x_{\eps})-v^{\eps}(x_{\eps}))+(1-\eps^{2})\frac{1}{\eps^{2}}\kint_{B_{\eps}(x_{\eps})}(\psi(y)-\psi(x_{\eps}))dy.
$$
From Taylor's expansions we obtain
$$ \frac{1}{\eps^{2}}\kint_{B_{\eps}(x_{\eps})}(\psi(y)-\psi(x_{\eps}))dy=\frac{\kappa}{2} \sum\limits_{j=1}^{N} \partial_{x_{j}x_{j}}\psi(x_{\eps})=\frac{\kappa}{2}\Delta \psi(x_{\eps}),$$
with 
$\kappa = \frac{1}{\eps^{n}|B_{1}(0)|}\int_{B_{1}(0)}z_{j}^{2}\eps^{N}dz=\frac{1}{|B_{1}(0)|}\int_{B_{1}(0)}z_{j}^{2}dz. $
Taking limits as $\eps \rightarrow 0$ we get
\begin{equation}
-\frac{\kappa}{2}\Delta \psi(x_{0})+v(x_{0})-u(x_{0}) \leq 0.
\end{equation}

The fact that $v$ is a supersolution is similar.
 \end{proof}

\section{Uniqueness for viscosity solutions} \label{sect-uniqueness}

Our goal is to show uniqueness for viscosity solutions to our system \eqref{ED1}.
To this end we follow ideas from \cite{BB,Mitake} (see also \cite{Jen} for uniqueness results concerning the
infinity Laplacian). This uniqueness result implies that
the whole sequence $u^\eps,v^\eps$ converge as $\eps \to 0$. The main idea behind the proof
(as in  \cite{BB,Mitake}) is to make a change of variable $U =\psi (u)$, $V = \psi (v)$
which transforms our system \eqref{ED1} in a system in which both equations are coercive in 
their respective variables $U$ and $V$ when
$DU \neq 0$ and $DV\neq 0$. Next we use the fact that 
one can take $\psi$
as close to the identity as we want.

First, we state the Hopf Lemma. We only state the result for supersolutions (the result for subsolutions is the same with the obvious changes).

\begin{lemma} \label{Hopf-lemma}
Let $V$ be an open set with $\overline{V} \subset \Omega$. Let $(u, v)$ be a viscosity supersolution of 
\eqref{ED1} and assume that there exists $x_0 \in \partial V$ such that
$$
u (x_0) = \min \Big\{ \min_\Omega u(x); \min_\Omega v(x) \Big\} 
\qquad 
\mbox{and} \qquad 
u (x_0) < \min_{x \in V} \{u(x),v(x)\}. 
$$
Assume further that $V$ satisfies the interior ball condition at $x_0$, namely, there exists an open ball $B_R \subset V$ 
with $x_0 \in \partial B_R$. Then,
$$
\liminf_{s\to 0} \frac{ u(x_0 - s \nu (x_0)) - u(x_0)}{s} > 0,
$$
where $\nu (x_0)$ is the outward normal vector to $\partial B_R$ at $x_0$.
\end{lemma}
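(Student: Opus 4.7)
The plan is to reduce the system to a scalar Hopf argument for $u$ alone, exploiting the hypothesis that $u(x_0)$ bounds $v$ from below throughout $\Omega$. Since $u(x_0)=\min\{\min_\Omega u,\min_\Omega v\}$, we have $v(y)\geq u(x_0)$ for every $y\in\Omega$. Consequently, whenever a $C^2$ test function $\phi$ touches $u$ from below at an interior point $y_0$, the viscosity supersolution inequality for the first equation yields
$$
-\tfrac12 \Delta_\infty \phi(y_0)+\phi(y_0)-u(x_0)\;\geq\;-\tfrac12\Delta_\infty\phi(y_0)+\phi(y_0)-v(y_0)\;\geq\;0.
$$
Equivalently, $\tilde u:=u-u(x_0)$ is a viscosity supersolution of the decoupled, coercive scalar equation $-\tfrac12 \Delta_\infty z+z=0$ in $\Omega$, with $\tilde u\geq 0$ on $\Omega$ and $\tilde u(x_0)=0$.

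Let $B_R$ be the interior ball at $x_0$, with center $x_c$, so that $x_0=x_c+R\nu(x_0)$, and work in the annulus $A:=B_R\setminus \overline{B_{R/2}}$. For $\alpha>0$ to be chosen, I would use the classical exponential barrier
$$
w(x):=e^{-\alpha|x-x_c|^2}-e^{-\alpha R^2},
$$
which vanishes on $\partial B_R$, is positive in $B_R$, and has $\nabla w\neq 0$ on $\overline{A}$ (because $|x-x_c|\geq R/2$ there), so that $\Delta_\infty w$ is classically well defined. A direct calculation gives
$$
\tfrac12 \Delta_\infty w(x)-w(x)=e^{-\alpha |x-x_c|^2}\bigl[2\alpha^2|x-x_c|^2-\alpha-1\bigr]+e^{-\alpha R^2},
$$
which is strictly positive on $\overline{A}$ for $\alpha$ large enough. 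By $1$-homogeneity of $\Delta_\infty$ the same strict inequality persists with $\epsilon w$ in place of $w$, for every $\epsilon>0$.

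Next I choose $\epsilon>0$ small enough that $\tilde u\geq \epsilon w$ on $\partial A$: on $\partial B_R$ this is automatic, since $w=0$ and $\tilde u\geq 0$; on $\partial B_{R/2}$ one uses $\overline{B_{R/2}}\subset V$ together with $u(x_0)<\min_V u$ to get $\inf_{\partial B_{R/2}}\tilde u>0$ (the inf is attained by lower semicontinuity on the compact set), and then shrinks $\epsilon$. Suppose $\tilde u\geq \epsilon w$ failed inside $A$; then $\tilde u-\epsilon w$ would attain a negative minimum at some $y_0\in A$. After the standard perturbation $\psi_\delta(x):=\epsilon w(x)+(\tilde u-\epsilon w)(y_0)-\delta |x-y_0|^2$ (which touches $\tilde u$ strictly from below at $y_0$ and has nonvanishing gradient there), the reduced supersolution property applied to $\psi_\delta$ and passed to the limit $\delta\to 0$ gives
$$
(\tilde u-\epsilon w)(y_0)\;\geq\;\tfrac{\epsilon}{2}\Delta_\infty w(y_0)-\epsilon w(y_0)\;>\;0,
$$
contradicting the negativity of the minimum. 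Hence $\tilde u\geq \epsilon w$ throughout $\overline{A}$.

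Finally, since $w(x_0)=0$, evaluating the inequality $\tilde u\geq \epsilon w$ along the inward-normal ray $x_0-s\nu(x_0)=x_c+(R-s)\nu(x_0)$ yields
$$
\frac{u(x_0-s\nu(x_0))-u(x_0)}{s}\;\geq\;\epsilon\cdot \frac{e^{-\alpha(R-s)^2}-e^{-\alpha R^2}}{s}\;\longrightarrow\;2\alpha R\epsilon\, e^{-\alpha R^2}\;>\;0\qquad\text{as }s\to 0,
$$
which is precisely the claimed strict positivity of the $\liminf$. The main technical point is the comparison step in the third paragraph: one has to implement the viscosity supersolution property at a merely non-strict touching from below, and make sure the perturbation trick $\delta|x-y_0|^2$ preserves the non-vanishing of the gradient so that no passage to the semicontinuous envelopes of $\Delta_\infty$ is needed; everything else reduces to a routine calculation with the exponential barrier.
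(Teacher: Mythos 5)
Your proof is correct and follows the same barrier-comparison strategy the paper points to (it cites Mitake's appendix and names exactly the barrier $w(x)=e^{-\alpha|x-x_c|^2}-e^{-\alpha R^2}$ on the annulus $\{R/2<|x-x_c|<R\}$). Your reduction to the decoupled scalar equation $-\tfrac12\Delta_\infty z+z=0$, via $v\ge u(x_0)$ on $\Omega$, together with the quadratic-perturbation step that keeps $\nabla\psi_\delta\neq 0$ so that no semicontinuous envelope is needed, is exactly the right way to make the paper's one-line sketch rigorous.
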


\begin{remark}{\rm
An analogous statement holds for the second component of the system, $v$. If we have that
$$
v (x_0) = \min \Big\{ \min_\Omega u(x); \min_\Omega v(x) \Big\} 
\qquad 
\mbox{and} \qquad
v (x_0) < \min_{x \in V} \{u(x),v(x)\}. 
$$
Then we have 
$$
\liminf_{s\to 0} \frac{ v(x_0 - s \nu (x_0)) - v(x_0)}{s} > 0.
$$
}
\end{remark}

\begin{proof}[Proof of Lemma \ref{Hopf-lemma}]
See the Appendix in \cite{Mitake}. In fact one can take
$$
w(x):=e^{-\alpha |x|^2} - e^{-\alpha R^2} 
$$
and show that $w$ is a strict subsolution of any of the two equations in \eqref{ED1} in the annulus $\{x : R/2<|x|<R\}$.
\end{proof}

The Strong Maximum Principle follows form Hopf Lemma.

\begin{theorem} \label{SMP-theo}
Let $(u, v)$ be a viscosity supersolution of \eqref{ED1}. 
Assume that $\min_\Omega \min\{ u, v\}$ is attained at an interior point of $\Omega$. Then $u = v = C$ for some constant $C$
in the whole $\Omega$.
\end{theorem}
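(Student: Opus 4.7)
My plan is to argue by contradiction, combining a propagation-of-minimum step with Lemma \ref{Hopf-lemma}. Let $m := \min_\Omega \min\{u, v\}$, which by hypothesis is attained at some interior point of $\Omega$. Suppose, for contradiction, that $(u, v) \not\equiv (m, m)$ on $\Omega$. First I establish the following \emph{propagation}: if $u$ attains its minimum $m$ at an interior point $y \in \Omega$, then $v(y) = m$ as well, and symmetrically for $v$. Indeed, $\phi_\delta(x) := m - \delta |x - y|^2$ strictly touches $u$ from below at $y$; with $D\phi_\delta(y) = 0$ and $D^2\phi_\delta(y) = -2\delta I$, the upper semicontinuous envelope computed in Section \ref{sect-limiteviscoso} gives $\tfrac{1}{2} F_1^*(0, -2\delta I) = \tfrac{1}{2}\max\{2\delta, 0\} = \delta$, so the supersolution inequality for the first equation reads $\delta + m - v(y) \geq 0$; letting $\delta \to 0^+$ gives $v(y) = m$. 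The analogue for $v$ uses the standard Laplacian equation and is a routine Taylor computation.

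Define the closed subset $Z := \{x \in \Omega : u(x) = v(x) = m\}$ of $\Omega$; by the propagation it is nonempty. Suppose $Z \neq \Omega$. Since $\Omega$ is connected, I can choose $y^* \in \Omega \setminus Z$ with $0 < R := \dist(y^*, Z) < \dist(y^*, \partial\Omega)$; then $\overline{B_R(y^*)} \subset \Omega$, $B_R(y^*) \cap Z = \emptyset$, and there exists $x^* \in \partial B_R(y^*) \cap Z$. Moreover, both $u > m$ and $v > m$ on $B_R(y^*)$: indeed, if $u(y) = m$ at some $y \in B_R(y^*)$, the propagation would force $v(y) = m$ too, placing $y$ in $Z$ and contradicting $B_R(y^*) \cap Z = \emptyset$.

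Now Lemma \ref{Hopf-lemma} applies with $V = B_R(y^*)$ and $x_0 = x^*$, yielding
$$
\liminf_{s \to 0^+} \frac{u(x^* - s\nu) - u(x^*)}{s} > 0, \qquad \nu := \frac{x^* - y^*}{R}.
$$
To force a contradiction I invoke the quantitative comparison inside the proof of Lemma \ref{Hopf-lemma}: $u - m \geq \sigma w$ on the annulus $\{R/2 < |x - y^*| < R\}$, where $w(x) := e^{-\alpha |x - y^*|^2} - e^{-\alpha R^2}$ is a strict subsolution of the first equation for suitable $\alpha > 1/(2R^2)$ and sufficiently small $\sigma > 0$. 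Then $\phi(x) := m + \sigma w(x) - \eta |x - x^*|^4$ (with $\eta > 0$ tiny) strictly touches $u$ from below at $x^*$: equality holds at $x^*$, $\phi \leq u - \eta|x - x^*|^4 < u$ on the annulus, and $w < 0$ outside $B_R(y^*)$. A direct computation at $x^*$ (where $|x^* - y^*| = R$) gives $\nabla\phi(x^*) = -2\alpha R\sigma e^{-\alpha R^2}\nu \neq 0$ and
$$
\Delta_\infty \phi(x^*) = \sigma\,(4\alpha^2 R^2 - 2\alpha)\, e^{-\alpha R^2} > 0,
$$
thanks to $\alpha R^2 > 1/2$. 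The viscosity supersolution inequality at $x^*$ for the first equation now reads $-\tfrac{1}{2}\Delta_\infty \phi(x^*) + m - v(x^*) \geq 0$, which forces $v(x^*) < m$; this contradicts $v(x^*) = m$ (since $x^* \in Z$). Hence $Z = \Omega$, i.e., $u \equiv v \equiv m$.

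The main obstacle is in the third paragraph: Lemma \ref{Hopf-lemma} as stated only provides a 1D directional derivative, so the essential step is to extract from its proof the full annular comparison $u - m \geq \sigma w$. This requires a simultaneous balance: $\alpha$ must be large enough to make $w$ a strict subsolution of the first equation in the annulus \emph{and} to make $\Delta_\infty w(x^*) > 0$, while $\sigma$ must be small enough to guarantee $\sigma w \leq u - m$ on the inner boundary of the annulus.
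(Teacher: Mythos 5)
The paper establishes Theorem~\ref{SMP-theo} only by citing the Appendix of \cite{Mitake}, so you are supplying a self-contained proof where the text offers none; your argument is correct and follows the expected route. The propagation step is right: at an interior minimizer $y$ of $u$, the quadratic $m-\delta|x-y|^2$ touches $u$ from below with zero gradient and Hessian $-2\delta I$, so the envelope $F_1^*$ from Section~\ref{sect-limiteviscoso} gives $v(y)\le m+\delta$, hence $v(y)=m$; for $v$ the second equation gives $u(y)=m$ symmetrically with no envelope needed. You are also right that Lemma~\ref{Hopf-lemma} alone does not close the argument for a merely lower semicontinuous supersolution: the inequality $\liminf_{s\to 0^+}\frac{u(x^*-s\nu)-m}{s}>0$ is compatible with $x^*$ being a global interior minimum, so one must reconstruct the annular barrier and package it into a $C^2$ test function. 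Your choice $\phi = m+\sigma w-\eta|x-x^*|^4$ does this cleanly (the quartic correction secures a strict touching outside the annulus, where $w\le 0$), $\nabla\phi(x^*)\ne 0$, and the supersolution inequality at $x^*$ then forces $v(x^*)<m$, contradicting $x^*\in Z$.

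One small imprecision worth flagging: the threshold $\alpha>1/(2R^2)$ only guarantees $\Delta_\infty w(x^*)>0$. For the barrier comparison $u-m\ge\sigma w$ you also need $m+\sigma w$ to be a strict classical subsolution of $-\tfrac12\Delta_\infty\phi+\phi-v(x)=0$ in the annulus; since all one knows about $v$ is $v\ge m$, this reduces to $\Delta_\infty w>2w$ there, which requires $\alpha$ somewhat larger (roughly $\alpha^2R^2-2\alpha-2>0$). The paper's own sketch of Lemma~\ref{Hopf-lemma}, which calls $w$ itself a strict subsolution of the two equations, has the same looseness about the zeroth-order term, so this is not a gap in your reasoning, merely a constant to be tracked.
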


\begin{proof}
Again we refer to the Appendix in \cite{Mitake}.
\end{proof}

Now we can proceed with the proof of the Comparison Principle.

\begin{theorem} \label{teo-compar} Assume that $(u_1, v_1)$ and $(u_2, v_2)$ are a bounded viscosity subsolution 
and a bounded viscosity supersolution of \eqref{ED1}, respectively, and also assume that $u_1 \leq u_2$ and $v_1 \leq v_2$ on 
$\partial \Omega$. Then 
$$
u_1 \leq u_2 \qquad \mbox{and} \qquad v_1 \leq v_2,
$$
in $\Omega$.
\end{theorem}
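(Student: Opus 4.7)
The plan follows the outline indicated by the authors and the references \cite{BB,Mitake}. We argue by contradiction, assuming that
$$M := \max\Big\{\max_{\overline{\Omega}}(u_1 - u_2),\ \max_{\overline{\Omega}}(v_1 - v_2)\Big\} > 0.$$
Since $u_1 \leq u_2$ and $v_1 \leq v_2$ on $\partial \Omega$, and since $u_1-u_2$, $v_1-v_2$ are upper semicontinuous, this positive maximum is attained at some interior point $x_0 \in \Omega$. After possibly swapping the roles of the two components, we may assume $(u_1 - u_2)(x_0) = M$; the case $(v_1 - v_2)(x_0) = M$ is handled by an analogous argument applied to the second equation.

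The decisive device is the change of variable $U_i = \psi(u_i)$, $V_i = \psi(v_i)$, with $\psi$ a smooth strictly increasing function close to the identity. Writing $\phi = \psi^{-1}$ and using the pointwise identity $\Delta_{\infty} u = \phi''(U)|\nabla U|^2 + \phi'(U)\Delta_{\infty} U$, the first equation of \eqref{ED1} transforms into
$$-\frac{\phi'(U)}{2}\Delta_{\infty} U - \frac{\phi''(U)}{2}|\nabla U|^2 + \phi(U) - \phi(V) = 0,$$
and a parallel computation rewrites the second equation. Choosing $\psi$ so that $\phi'' < 0$ (for instance $\psi(t) = e^{\lambda t}$ after an additive normalization that keeps $u_i,v_i$ positive), the transformed system gains in each equation a strictly positive lower-order contribution $-\tfrac{\phi''}{2}|\nabla U|^2$ (respectively $-\tfrac{\phi''}{2}|\nabla V|^2$), which provides strict monotonicity at points where the gradient does not vanish. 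Because $\psi$ is an increasing homeomorphism the maximum $M$ and the boundary ordering are preserved, and by letting $\psi$ approach the identity the coupling $\phi(U)-\phi(V)$ is made arbitrarily close to $U-V$.

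We then apply the Crandall--Ishii--Lions doubling of variables to the transformed system. For the first equation we use the quartic penalty
$$\Phi_{\alpha}(x,y) = U_1(x) - U_2(y) - \frac{\alpha}{4}|x-y|^4,$$
compatible with the singularity of $\Delta_{\infty}$ because the common test gradient $p_{\alpha} = \alpha|x_{\alpha} - y_{\alpha}|^2(x_{\alpha} - y_{\alpha})$ at an approximate maximizer $(x_{\alpha}, y_{\alpha})$ is generically nonzero. Ishii's lemma yields symmetric matrices $X_{\alpha} \leq Y_{\alpha}$ sharing this gradient, and subtracting the sub/supersolution inequalities the second-order infinity-Laplace contributions are controlled by the monotonicity $\langle X_{\alpha}\xi,\xi\rangle \leq \langle Y_{\alpha}\xi,\xi\rangle$ with $\xi = p_{\alpha}/|p_{\alpha}|$, while the new $|\nabla U|^2$ term contributes a quantitative strict slack. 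Passing to the limit $\alpha \to \infty$, so that $x_{\alpha}, y_{\alpha} \to x_0$, and using $(V_1 - V_2)(x_0) \leq M$ together with $\psi$ close to the identity, we contradict $(U_1 - U_2)(x_0) = M > 0$. The second equation is treated analogously with a standard quadratic penalty.

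The chief obstacle is the coupling between the two equations: at the maximizer $x_0$ of $U_1 - U_2$ the value $(V_1 - V_2)(x_0)$ is only bounded above by $M$, not strictly below it, so a direct subtraction of the sub/super inequalities for the \emph{original} system would only yield a non-strict $0 \leq 0$. The change of variable is precisely what produces the missing strict positivity, and the flexibility of taking $\psi$ close to the identity is what lets us absorb into this slack the perturbation of the linear coupling $u-v$ into $\phi(U)-\phi(V)$. A secondary technical point is the standard degeneracy of $\Delta_{\infty}$ when $p_{\alpha} \to 0$, which is handled using the semicontinuous envelopes $F_1^*, F_{1,*}$ from Section \ref{sect-limiteviscoso}, exactly as in the proof of Theorem \ref{teo.converge.222}.
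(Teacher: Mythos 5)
There is a genuine gap in your argument, and it is precisely at the point you gloss over with the word ``generically.'' Your whole mechanism for producing strictness is the term $-\tfrac{1}{2}\phi''(U)|\nabla U|^2$ created by the change of variables; this term is strictly positive \emph{only where the gradient is bounded away from zero}. After doubling variables, when you let $\alpha\to\infty$ the penalty gradient $p_\alpha=\alpha|x_\alpha-y_\alpha|^2(x_\alpha-y_\alpha)$ may tend to zero (indeed, at a differentiability point $x_0$ of $U_1-U_2$ one has $DU_1(x_0)=DU_2(x_0)$, and this common gradient can perfectly well vanish). If $p_\alpha\to 0$, the slack $-\tfrac{1}{2}\phi''|p_\alpha|^2$ also tends to zero and you recover only the useless inequality $0\leq 0$, because the coupling term still allows $(V_1-V_2)(x_0)=M$. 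The semicontinuous envelopes $F_1^*,F_{1,*}$ handle the degeneracy of $\Delta_\infty$ at a vanishing gradient, but they do nothing for the vanishing of the zeroth-order slack; these are two different problems.

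The paper's proof spends most of its effort precisely on ruling out this degenerate scenario, and that entire apparatus is missing from your proposal. Concretely: the paper first replaces $u_1,v_1,u_2,v_2$ by sup/inf convolutions so that differences are semi-convex; it then studies the translated maximum $M(h)$ and shows, via semi-convexity of $h\mapsto M(h)$ together with the Hopf Lemma and the Strong Maximum Principle (Lemma \ref{Hopf-lemma} and Theorem \ref{SMP-theo}), that one can choose translations $h_n\to 0$ along which the common gradient at the maximizer satisfies a quantitative lower bound $|Dw_1(x_n+h_n)|=|Dw_2(x_n)|>\delta(n)>0$. Only then does the Barles--Busca change of variables yield a strictly monotone system at those points, and only then does subtracting the sub/super inequalities produce a contradiction. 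Without such a lower bound the proof does not close. So your outline is broadly in the same spirit as \cite{BB,Mitake}, but it omits the translation/Strong-Maximum-Principle step that is the actual crux; inserting ``generically nonzero'' in its place is not a proof. A secondary imprecision: treating the two equations by two separate doubling arguments ignores that the coupling forces $u_1(x_h+h)-u_2(x_h)=v_1(x_h+h)-v_2(x_h)$ at the joint maximizer, a fact the paper uses and that you would also need.
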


This comparison result implies the desired uniqueness for \eqref{ED1}. 

\begin{corollary} \label{corl-unicidad}
There exists a unique viscosity solution to \eqref{ED1}.
\end{corollary}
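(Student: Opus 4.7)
The plan is to combine the existence granted by the game-theoretic approximation with the comparison principle to obtain uniqueness. For existence, I would simply invoke Theorem \ref{teo.conv.unif} together with Theorem \ref{teo.converge.222}: the value functions $(u^\eps, v^\eps)$ of the game, which solve the DPP \eqref{DPP} (Theorem \ref{teo.dpp2}), admit a subsequence converging uniformly in $\overline{\Omega}$ to a pair of continuous functions $(u,v)$, and any such uniform limit is a viscosity solution of \eqref{ED1} taking the prescribed boundary data $f$ and $g$. This produces at least one viscosity solution.

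For uniqueness, the argument is a standard two-line consequence of Theorem \ref{teo-compar}. Suppose $(u_1,v_1)$ and $(u_2,v_2)$ are two viscosity solutions of \eqref{ED1}. By Definition \ref{def.sol.viscosa.system}, each pair is simultaneously a subsolution and a supersolution, both pairs are continuous on $\overline{\Omega}$ (hence bounded), and they share the same boundary values: $u_1=u_2=f$ and $v_1=v_2=g$ on $\partial\Omega$. In particular $u_1\le u_2$ and $v_1\le v_2$ on $\partial\Omega$, so the hypotheses of the comparison principle are met. Applying Theorem \ref{teo-compar} with $(u_1,v_1)$ playing the role of the subsolution and $(u_2,v_2)$ the role of the supersolution yields $u_1\le u_2$ and $v_1\le v_2$ throughout $\Omega$. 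Interchanging the roles of the two pairs and applying the theorem again yields the reverse inequalities, whence $u_1\equiv u_2$ and $v_1\equiv v_2$ in $\overline{\Omega}$.

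There is essentially no obstacle at this stage: all the substantive work has been done in Theorem \ref{teo.converge.222} (existence) and Theorem \ref{teo-compar} (comparison). The only bookkeeping point worth making explicit is that a viscosity solution in the sense of Definition \ref{def.sol.viscosa.system} is both a sub- and a supersolution, so that the symmetric application of comparison is legitimate; and that continuity on $\overline{\Omega}$ provided by our solutions guarantees the boundedness hypothesis of Theorem \ref{teo-compar}. The corollary then follows in a few lines.
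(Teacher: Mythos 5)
Your proposal is correct and coincides with the paper's intended argument: existence comes from the uniform limit of the game values via Theorems \ref{teo.conv.unif} and \ref{teo.converge.222}, and uniqueness follows from applying the comparison principle, Theorem \ref{teo-compar}, symmetrically to two solutions sharing the same boundary data. Nothing is missing.
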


\begin{proof}[Proof of Theorem \ref{teo-compar}]
We argue by contradiction and assume that
$$
c := \max \Big\{ \max_\Omega (u_1(x) -u_2(x)) ; \max_\Omega (v_1(x) -v_2(x)) \Big\} > 0.
$$
We replace $(u_1, v_1)$ by $(u_1 - c/2, v_1- c/2)$. We may assume further that $u_1$ and $v_1$ are semi-convex 
and $u_2$ and $v_2$ are semi-concave by using sup and inf convolutions and restricting the problem to a slightly smaller domain 
if necessary (see \cite{Mitake} for extra details). 
We now perturb $u_1$ and $v_1$ as follows. For $\alpha > 0$, take $\Omega_\alpha := \{x \in \Omega : 
dist(x,\partial \Omega) > \alpha\}$ and for $|h|$ sufficiently small, define
$$
M(h):=\max \Big\{ \max_{x \in \Omega} (u_1(x+h)- u_2(x)) ; \max_{x \in \Omega} (v_1(x+h)- v_2(x))\Big\} 
 = w_1(x_h+h)- w_2(x_h) 
 $$
for $w=u \mbox{ or } v$ (we will call $w$ the component at which the maximum is achieved)
 and some $x_h \in \Omega_{|h|}$. Since $M(0) > 0$, for $|h|$ small enough, we have $M(h)>0$ and the above maximum is the same if we take it over $\Omega_\alpha$ any $\alpha >0$ sufficiently small and fixed. Note that from the equations we get that at $x_h$ we have 
 $$
 u_1(x_h + h) - u_2(x_h) = v_1(x_h + h) - v_2(x_h).
 $$
 Now, we claim that 
there exists a sequence $h_n \to 0$ such that at any maximum point $y \in \Omega_{|h_n|}$ 
 of $$\max \Big\{ \max_{x\in \Omega_{|h_n|}} (u_1(x + h_n) - u_2(x)) ;  \max_{x\in \Omega_{|h_n|}} (u_1(x + h_n) - u_2(x))\Big\},$$ 
 we have $$Dw_1(y + h_n) =
Dw_2(y) \neq 0$$ for $ n \in \mathbb{N}$. To prove this claim we argue again
 by contradiction and assume that there exists, for each $h$ with $|h|$ small, $x_h$ which is a maximum point so 
 that $Dw_1 (x_h + h) = Dw_2 (x_h) = 0$. As $u_1 - u_2$ and $v_1-v_2$ are semi-convex, $M(h)$ is semi-convex for $h$ small. 
 Now for any $k$ close to $h$, one has that, thanks to the fact that $Dw_1 (x_h + h) = 0$,
 $$
M(k) \geq w_1 (x_h+k)- w_2(x_h) \geq w_1(x_h+h)-C|h-k|^2-w(x_h) = M(h)-C|h-k|^2.
 $$
 Thus, $0 \in \partial M(h)$ for every $|h|$ small. This implies that $M(h) = M(0)$ for $|h|$ small.
Now take $x_0 \in \Omega$ a maximum point of $\max \{ \max_{x\in\Omega} (u_1(x) - u_2(x));  \max_{x\in\Omega} (v_1(x) - v_2(x))$.
For $|h|$ sufficiently small we have that $x_0 \in \Omega_{|h|}$, and, $w_1(x_0) - w_2(x_0) = M(0) = M(h) \geq w_1(x_0 + h) - w_2(x_0)$.
Hence, $x_0$ is a local maximum of $u_1$, $v_1$. Now, the strong maximum principle, Theorem \ref{SMP-theo},
implies that $u_1$, $v_1$ are constant in $\Omega$, 
which gives the desired contradiction and proves the claim.

Now we recall that for a semi-convex function $a$ and a semi-concave function $b$
we have that both $a$ and $b$ are differentiable at any local maximum points of $b-a$ and 
if the function $a$ (or $b$)  is differentiable at $x_0$ and $\{x_n\}$ is a sequence of differentiable points 
such that $x_n \to x_0$, then $Da(x_n)\to
Da(x_0)$ (or $Db(x_n)\to
Db(x_0)$). Then, thanks to these properties and the previous claim, we have the existence of a positive constant $\delta (n) > 0$ 
so that $|Dw_1(y+h_n)| = |Dw_2(y)| > \delta (n)$ for all $y$ such that the maximum in the claim is attained.

Now we consider, as in \cite{BB}, the functions $\varphi_\eps$ defined by
$$
\varphi_\eps ' (t) = \exp \left( \int_0^t \exp \Big( - \frac{1}{\eps}  (s- \frac{1}{\eps}) \Big) ds\right).
$$
These functions $\varphi_\eps$ are close to the identity, $\varphi_\eps ' > 0$, $\varphi_\eps '$ converge to $1$ as $\eps \to 0$
and $\varphi_\eps ''$ converge to $0$ as $\eps \to 0$ with $(\varphi_\eps ''(s))^2 > \varphi_\eps ''' (s) \varphi_\eps ' (s)$,
see \cite{BB}.

With $\psi_\eps = \varphi_\eps^{-1}$ we perform the changes of variables
$$
U_{i}^\eps =\psi_\eps (u_i), \qquad V_i^\eps = \psi_\eps (v_i), \qquad i=1,2.
$$
It is clear to see that $U_1$, $V_1$ are semi-convex and $U_2$, $V_2$ are semi-concave. 
We have that
$\max \{ \max_x (U_1^\eps (x+h_n)- U_2^\eps (x)) ;  \max_x (U_1^\eps (x+h_n)- U_2^\eps (x)) \}$
is achieved at some point $x_\eps$ and by passing to a subsequence if necessary, $x_\eps \to x_{h_n}$ as $\eps \to 0$. 
Since we have $|Dw_1(x_n + h_n)| = |Dw_2 (x_n)| > \delta(n)$, 
we deduce that for $\eps$ sufficiently small, it holds that $|DW_1^\eps(x_\eps + h_n)| = |DW_2^\eps (x_\eps)| \geq \delta(n)/2$.

Now, omitting the dependence on $\eps$ in what follows, we observe that, after the change of variables
$$
u_1 = \varphi (U_1), \qquad v_1 = \varphi (V_1)
$$
the pair of new unknowns $(U_1,V_1)$ verifies the equations (in the viscosity sense)
$$
\begin{array}{rl}
\displaystyle 0 & \displaystyle =- \displaystyle \half \Delta_{\infty}u_1(x) + u_1(x) - v_1(x) \\[10pt]
& \displaystyle = - \frac12 \varphi ' (U_1)\Delta_\infty U_1(x) -  \frac12 \varphi''(U_1)|DU_1|^2 (x) + \varphi(U_1(x)) - \varphi(V_1 (x))\\[10pt]
& \displaystyle = \varphi ' (U_1) \Big( - \frac12 \Delta_\infty U_1(x) -  \frac12 \frac{\varphi''(U_1)}{\varphi ' (U_1)}|DU_1|^2 (x) + 
\frac{\varphi(U_1(x)) - \varphi(V_1 (x))}{\varphi ' (U_1)}\Big),
\end{array}
$$
and 
$$
\begin{array}{rl}
\displaystyle 0 & \displaystyle = -   \displaystyle \frac{\kappa}{2} \Delta v(x) + v(x) - u(x) \\[10pt]
& =  \displaystyle - \frac{\kappa}{2}  \Big( \varphi ' (V_1)\Delta V_1(x) +  \varphi''(V_1)|DV_1|^2 (x) \Big) + \varphi(V_1 (x)) - \varphi(U_1 (x) ) \\[10pt]
& =  \displaystyle  \varphi ' (V_1) \Big(- \frac{\kappa}{2} \Delta V_1(x) - \frac{\kappa}{2}   \frac{\varphi''(V_1)}{\varphi ' (V_1)}
|DV_1|^2 (x) + \frac{\varphi(V_1 (x)) - \varphi(U_1 (x) )}{ \varphi ' (V_1)}  \Big),
\end{array}
$$
and similar equations also hold for $(U_2,V_2)$.

Since $|DU_1^\eps(x_\eps + h_n)| = |DV_2^\eps (x_\eps)| \geq \delta(n)/2$
this system is strictly monotone (the first equation is monotone in $U_1$ and the second in $V_1$. Here we use that $(\varphi_\eps ''(s))^2 > \varphi_\eps ''' (s) \varphi_\eps ' (s)$ that implies that
$\varphi_\eps '' /\varphi_\eps '$ is increasing, i.e. $-(\varphi_\eps '' /\varphi_\eps ')' >0$. Thus, 
from the strict monotonicity, we get the desired contradiction. See the proof of \cite{BB}, Lemma 3.1, for a more detailed discussion.
\end{proof}

\section{Possible extensions of our results} \label{sect.extensiones}

In this section we gather some comments on more general systems that can be
studied using the same techniques.

\subsection{Coefficients with spacial dependence}

We can look at the case in which the probability of jumping from one board to the other
depends on the spacial location, that is, we can take the probability to jump from board 1 to 2 
as $a(x) \eps^2$ and from 2 to 1 as $b(x) \eps^2$, for two given nonnegative functions $a(x)$, $b(x)$. 
In this case the DPP is given by 
$$
 \left\lbrace
\begin{array}{ll}
 \displaystyle u^{\eps}(x)= a(x) \eps^{2}v^{\eps}(x)+(1- a(x) \eps^{2})\Big\{\half \sup_{y \in B_{\eps}(x)}u^{\eps}(y) + \half \inf_{y \in B_{\eps}(x)}u^{\eps}(y)
 \Big\} \qquad &  \ x \in \Omega,  \\[10pt]
   \displaystyle v^{\eps}(x)= b(x) \eps^{2}u^{\eps}(x)+(1- b(x) \eps^{2})\kint_{B_{\eps}(x)}v^{\eps}(y)dy  \qquad & \ x \in \Omega,  \\[10pt]
u^{\eps}(x) = \ol{f}(x) \qquad & \ x \in \R^{n} \backslash \Omega,  \\[10pt]
v^{\eps}(x) = \ol{g}(x) \qquad & \ x \in \R^{n} \backslash \Omega. 
\end{array}
\right.
$$ 
and the limit system is 
$$
 \left\lbrace
\begin{array}{ll}
- \displaystyle \half \Delta_{\infty}u(x) + a(x) u(x) - a(x) v(x)=0 \qquad &  \ x \in \Omega,  \\[10pt]
-   \displaystyle \frac{\kappa}{2} \Delta v(x) + b(x) v(x) - b(x) u(x)=0  \qquad &  \ x \in \Omega,  \\[10pt]
u(x) = f(x) \qquad & \ x \in \partial \Omega,  \\[10pt]
v(x) = g(x) \qquad & \ x \in \partial \Omega, 
\end{array}
\right.
$$

\subsection{$n\times n$ systems}
We can deal with a system of $n$ equations $N$ unknowns, $u_1,...,u_n$, 
of the form
$$
 \left\lbrace
\begin{array}{ll}
- \displaystyle  L_i u_i(x) + b_i u_i(x) - \sum_{j\neq i} a_{ij} u_j(x)=0 \qquad &  \ x \in \Omega,  \\[10pt]
u_i(x) = f_i(x) \qquad & \ x \in \partial \Omega.
\end{array}
\right.
$$
Here $L_i$ is  $\Delta_\infty$ or $\Delta$, and the coefficients $b_i$, $a_{ij}$ are nonnegative
and verify
$$
b_i = \sum_{j\neq i} a_{ij}.
$$

To handle this case we have to play in $n$ different boards and
take the probability of jumping from board $i$ to board $j$ as $a_{ij} \eps^2$ 
(notice that then the probability of continue playing in the same board
$i$ is 
$1- \sum_{j\neq i} a_{ij} \eps^2$).
The associated DPP is given by 
$$
 \left\lbrace
\begin{array}{l}
 \displaystyle u_i^{\eps}(x)= \eps^{2} \sum_{j\neq i} a_{ij} u_j^{\eps}(x)+(1- b_i \eps^{2}) 
 \Big\{\half \sup_{y \in B_{\eps}(x)}u_i^{\eps}(y) 
 + \half \inf_{y \in B_{\eps}(x)}u_i^{\eps}(y)
 \Big\}  \\
 \mbox{ or }  \\
  \displaystyle u_i^{\eps}(x)= \eps^{2} \sum_{j\neq i} a_{ij} u_j^{\eps}(x)+(1- b_i \eps^{2}) 
  \kint_{B_{\eps}(x)}u_i^{\eps}(y)dy  \qquad\qquad\qquad \qquad \ x \in \Omega,  \\[10pt]
 u_i^{\eps}(x) = \ol{f}(x) \qquad \qquad \ x \in \R^{N} \backslash \Omega. 
\end{array}
\right.
$$

\subsection{Systems with normalized $p-$Laplacians}
The normalized $p-$Laplacian is given by
$$
\Delta_p^N u (x) = \alpha \Delta_\infty u (x) + \beta \Delta u(x),
$$
with $\alpha (p)$, $\beta (p)$ verifying 
$
\alpha + \beta =1$
(see \cite{MPRb}). Notice that this operator is $1-$homogeneous.
With the same ideas used here we can also handle the system
$$
 \left\lbrace
\begin{array}{ll}
- \displaystyle  \Delta_{p}^N u(x) + u(x) - v(x)=0 \qquad &  \ x \in \Omega,  \\[10pt]
- \displaystyle \Delta_{q}^N v(x) + v(x) - u(x)=0 \qquad &  \ x \in \Omega,  \\[10pt]
u(x) = f(x) \qquad &  \ x \in \partial \Omega,  \\[10pt]
v(x) = g(x) \qquad & \ x \in \partial \Omega.
\end{array}
\right.
$$

The associated game runs as follows, in the first board, 
when the token does not jump, a biased coin is towed (with probabilities $\alpha(p)$ os heads and
 $\beta(p)$ of tails), if we get heads then we play Tug-of-War and if we get tails then we move at random.
 In the second board the rules are the same but we use a biased coin with different probabilities 
 $\alpha(q)$ and $\beta(q)$, see \cite{MPRa}, \cite{MPRb}, \cite{PS} and \cite{R} for a similar game for a scalar
 equation (playing in only one board). The corresponding DPP is:
$$
 \left\lbrace
\begin{array}{l}
 \displaystyle u^{\eps}(x)=\eps^{2}v^{\eps}(x)+(1-\eps^{2}) \left[ \alpha (p)
 \Big\{\half \sup_{y \in B_{\eps}(x)}u^{\eps}(y) 
 + \half \inf_{y \in B_{\eps}(x)}u^{\eps}(y)
 \Big\}  +  \beta(p) \kint_{B_{\eps}(x)}u^{\eps}(y)dy \right] \qquad  \ x \in \Omega,  \\[10pt]
  \displaystyle v^{\eps}(x)=\eps^{2}u^{\eps}(x)+(1-\eps^{2}) \left[ \alpha (q)
 \Big\{\half \sup_{y \in B_{\eps}(x)}v^{\eps}(y) 
 + \half \inf_{y \in B_{\eps}(x)}v^{\eps}(y)
 \Big\}  +  \beta(q) \kint_{B_{\eps}(x)}v^{\eps}(y)dy \right] \qquad  \ x \in \Omega,  \\[10pt]
u^{\eps}(x) = \ol{f}(x) \qquad  \ x \in \R^{N} \backslash \Omega, \\[10pt]
v^{\eps}(x) = \ol{g}(x) \qquad  \ x \in \R^{N} \backslash \Omega. 
\end{array}
\right.
$$

\medskip

{\bf Acknowledgements.} Partially supported by CONICET grant PIP GI No 11220150100036CO
(Argentina), by  UBACyT grant 20020160100155BA (Argentina) and by MINECO MTM2015-70227-P
(Spain).

%%%%%%%%%%%%%%%%%%%%%%%%%%%%%%%%%%%%%%%%%%%%%%%%%%%%%%%%%%%%%%%%
%%%%%%%%%%%%%%%%%%%%%%%%%%%%%%%%%%%%%%%%%%%%%%%%%%%%%%%%%%%%%%%%
%%%%%%%%%%%%%%%%%%%%%%%%%%%%%%%%%%%%%%%%%%%%%%%%%%%%%%%%%%%%%%%%
%%%%%%%%%%%%%%%%%%%%%%%%%%%%%%%%%%%%%%%%%%%%%%%%%%%%%%%%%%%%%%%%

\end{document}